\newcommand{\R}{\mathbb{R}}
\newcommand{\Ss}{\mathbb{S}}
\newcommand{\Per}{\mathrm{Per}}
\newcommand{\calH}{\mathcal{H}}
\newcommand{\calC}{\mathcal{C}}
\newcommand{\ud}{\mathrm{d}}
\newcommand{\Rd}{\R^d}
\newcommand{\eps}{\varepsilon}
\newcommand{\da}{\downarrow}
\newcommand{\vp}{\varphi}
\newcommand{\fN}{\mathfrak{N}}
\newcommand{\rr}{\mathcal{R}}
\newcommand{\Reg}{\mathfrak{C}}
\newcommand{\inn}{\mathrm{inn}}
\newcommand{\ou}{\mathrm{ou}}
\newcommand{\piu}{\pi^{\uparrow}}
\newcommand{\Ku}{K^{\uparrow}}
\newcommand{\tauhA}{\widehat{A}_{\tau}}
\newcommand{\tauhE}{\widehat{E}_{\tau}}
\newcommand{\tauE}{E_{\tau}}
\newcommand{\tauF}{F_{\tau}}
\newcommand{\pl}[1]{\foreignlanguage{polish}{#1}}
\newtheorem{theorem}{Theorem}[section]
\newtheorem{proposition}[theorem]{Proposition}
\newtheorem{lemma}[theorem]{Lemma}
\newtheorem{corollary}[theorem]{Corollary}
\theoremstyle{definition}
\newtheorem{example}[theorem]{Example}
\newtheorem{remark}[theorem]{Remark}
\title[Nonlocal curvatures]{Asymptotics and geometric flows\\ for a class of nonlocal curvatures}
\author{Wojciech Cygan $^{1,2}$}
\address{$^1$University of Wroc{\l}aw,
		Faculty of Mathematics and Computer Science\\
		Institute of Mathematics,
		pl.\ Grunwaldzki 2/4, 50--384 Wroc{\l}aw, Poland}
\address{$^2$Technische Universit\"{a}t Dresden,
		Faculty of Mathematics\\
		Institute of Mathematical Stochastics,
		Zellescher Weg 25, 01069 Dresden, Germany}
\email{wojciech.cygan@uwr.edu.pl}
\thanks{Research supported by National Science Centre (Poland), grant no.\ 2019/33/B/ST1/02494}
\author{Tomasz Grzywny $^{3}$}
\address{$^{3}$Wroc{\l}aw University of Science and Technology,
Faculty of Pure and Applied Mathematics\\
	Wyb. \pl{Wyspia\'{n}skiego} 27,
	50-370 \pl{Wroc\l{}aw}, Poland}
\email{tomasz.grzywny@pwr.edu.pl}
\author[J.~Lenczewska]{Julia Lenczewska $^{3}$}
\email{julia.lenczewska@pwr.edu.pl}
\subjclass[2010]{35R11,  	%Fractional partial differential equations
				% 49Q05,  	%Minimal surfaces and optimization
				 35K93,     %Quasilinear parabolic equations with mean curvature operator
				 35D40,   %Viscosity solutions to PDEs
				 53E10,  %Flows related to mean curvature
				 %49Q20,   %Variational problems in a geometric measure-theoretic setting
				 52A38} %Length, area, volume and convex sets (aspects of convex geometry
\keywords{directional curvature, mean curvature, first variation, perimeter, viscosity solutions}
\numberwithin{equation}{section}
\begin{document}
\selectlanguage{english}

\begin{abstract}
We consider a family of nonlocal curvatures determined through a kernel 
which is symmetric and bounded from above by a radial and radially non-increasing profile satisfying an integrability condition.
It turns out that such definition encompasses various variants of nonlocal curvatures that have already appeared in the literature, including fractional curvature and anisotropic fractional curvature. The main task undertaken in the article is to study the limit behaviour of the introduced nonlocal curvatures under an appropriate limiting procedure. This enables us to recover known asymptotic results e.g.\ for the fractional curvature and for the anisotropic fractional curvature. For the convergence of anisotropic fractional curvatures we identify the limit object as the nonlocal curvature being the first variation of the related anisotropic fractional perimeter. We also prove existence, uniqueness and stability of viscosity solutions to the corresponding level-set parabolic Cauchy problem formulated in terms of the investigated nonlocal curvature. 
\end{abstract}

\maketitle

\section{Introduction}

The present article is concerned with the asymptotic behaviour of a certain class of nonlocal curvatures. 
 We are especially interested in the anisotropic fractional curvature which is given by the first variation of the anisotropic fractional perimeter. 
The main goal of the present work is to establish convergence of anisotropic fractional curvatures, and to identify the limit object.  
 Anisotropic curvatures have been intensively studied over last few decades, see e.g.\ \cite{Bellettini}, \cite{JEMS}, \cite{Cesaroni-Stability}, \cite{Cesaroni-Pagliari-flows}  and references therein.
 
 \subsection*{Anisotropic fractional curvature}
We recall that anisotropic perimeter related to a given convex body is a natural generalization of the classical perimeter and it is defined via a norm whose unit ball is equal to a given convex body. More precisely,
let $\mathcal{K}\subset \Rd$ be a convex compact set of non-empty interior (so-called convex body) and such that it is origin-symmetric. Let $\Vert \cdot \Vert_\mathcal{K}$ denote a norm on $\Rd$ given by
\begin{align*}
\Vert x\Vert_\mathcal{K} = \inf \{ \lambda >0: \lambda^{-1}x\in \mathcal{K}\},\quad x\in \Rd.
\end{align*}
Let $\mathcal{K}^*=\{y\in \Rd:\sup_{x\in \mathcal{K}} y\cdot x\leq 1\}$ be the polar body of $\mathcal{K}$. The anisotropic perimeter of a Borel set $E\subset \Rd$ with respect to $\mathcal{K}$ is defined as
\begin{align*}
\Per (E,\mathcal{K}) = \int_{\partial^* E}\Vert \mathsf{n}_{E}(x)\Vert_{\mathcal{K}^*}\, \ud x.
\end{align*}
Here $\mathsf{n}_E(x)$ denotes the measure theoretic outer unit normal vector of $E$ at $x\in \partial^* E$, where $\partial^*E$ is the reduced boundary of $E$, see \cite[Section 3.5]{Ambrosio_2000}.
For $0<\alpha <1$, the anisotropic $\alpha$-fractional perimeter of $E$ with respect to $\mathcal{K}$ is given by
\begin{align}\label{def-fract-anisotr-per}
\Per_{\alpha} (E,\mathcal{K}) = \int_E \int_{E^c}\frac{1}{\Vert x-y\Vert ^{d+\alpha }_\mathcal{K}}\,\ud x\, \ud y .
\end{align}
In \cite{Ludwig-Per} (see also \cite{Cygan-Grzywny-Per}) it was proved that for any bounded set of finite perimeter the following result holds
\begin{align}\label{anisotropic_alpha_per_coverg}
\lim_{\alpha \uparrow 1}(1-\alpha) \Per_\alpha (E,\mathcal{K}) = \Per (E,\mathcal{ZK}).
\end{align}
The set $\mathcal{ZK}$ is the so-called moment body of $\mathcal{K}$ which is defined as the unique convex body satisfying 
\begin{align}\label{moment_body}
\Vert y \Vert_{\mathcal{Z}^*\mathcal{K}} = \frac{d+1}{2}\int_\mathcal{K} |y\cdot x|\, \ud x,\quad  y\ \in \Rd,
\end{align}
where $\mathcal{Z}^*\mathcal{K}$ is the polar body of $\mathcal{ZK}$. 

The anisotropic fractional curvature of a Borel set $E\subset \Rd$ at $x\in \partial E$ related to the convex body $\mathcal{K}$ is defined as 
\begin{align}\label{anisotr-fract-curv}
H_{\alpha}^\mathcal{K} (x,E) = \lim_{r\downarrow 0}\int_{B_r(x)^c} \frac{\widetilde{\chi}_E(y)}{\|x-y\|_\mathcal{K}^{d+\alpha}}\, \ud y,
\end{align}
where 
\begin{align*}
\widetilde{\chi}_{E}(x) = \chi_{E^c} (x) - \chi_E (x),
\end{align*}
and $\chi_E (x)$ denotes the characteristic function of  $E\subset \R^d$.
Nonlocal curvatures of this type have been studied in the context of fractional mean curvature flows, see e.g.\ \cite{Imbert}, \cite{Chambolle-Interfaces}, \cite{Cesaroni-Stability}. One can show that the mean curvature in \eqref{anisotr-fract-curv} is the first variation of the perimeter defined at \eqref{def-fract-anisotr-per}, see \cite{Chambolle-Archiv}.
In the present paper we investigate the limit behaviour of \eqref{anisotr-fract-curv} as $\alpha \uparrow 1$. The question concerning the limit of the anisotropic fractional mean curvature was  posed in \cite[Remark 4.12]{Cesaroni-Stability} and the authors believed that uniform convergence holds and that the limit object should be the first variation of the anisotropic perimeter $\Per (E,\mathcal{ZK})$. We show that this conjecture is indeed true, that is 
we first prove that for any sequence $\alpha_n\uparrow 1$ and for sets $E_n\to E$ in $\Reg$ such that $x\in \partial E \cap \partial E_n$,
\begin{align}\label{anisotropic-result-intro}
\lim_{n\to \infty}(1-\alpha_n) H_{\alpha_n}^\mathcal{K} (x,E_n) =
\frac{1}{\kappa_{d-2}} \int_{e\in \Ss^{d-1}(x)\cap \, \nu(x)^{\perp}} \frac{K_{e}(x,E)}{\|e\|_\mathcal{K}^{d+1}}\, \calH^{d-2} (\ud e),
\end{align}
and, finally, we find that the first variation of the functional $\Per (E,\mathcal{ZK})$ is given by the right hand-side of \eqref{anisotropic-result-intro}, see Theorem \ref{anisotr-thm-first-var}.
Here, $\Reg$ is the class of subsets of $\Rd$ that can be obtained as the closure of an open set with compact $\calC^2$ boundary; and
by $E_n \to E$ in $\Reg$ we mean that there exists a sequence of diffeomorphisms $\{\Phi_n\}$ such that it converges to the identity in $\calC^2$ and  $E_n = \Phi_n(E)$. 
Further, $\kappa_{d-2} = \calH^{d-2}(\Ss^{d-2})$ and $\calH^{d-2} $ stands for the $(d-2)$-dimensional Hausdorff measure and $\Ss^{d-1}(x)$ is the unit sphere in $\R^{d}$ centred at $x$, and $\nu(x)$ is the normal unit vector for $\partial E$ at $x$, whereas $\nu (x)^\perp$ is the hyperplane orthogonal to $\nu (x)$. 

\subsection*{Classical mean and directional curvature}
To solve the posed problem of the convergence of anisotropic  fractional curvatures, we introduce a general version of nonlocal mean and directional curvatures, and study their asymptotics and related geometric flows. 
Before we discuss nonlocal versions of mean and directional curvatures, we briefly recall the definition of the classical directional curvature and the mean curvature.
Mean curvature of a surface in Euclidean space is a fundamental notion in classical differential geometry and it is an essential tool for the study of minimal surfaces, see \cite{Giusti}.  

Let us fix $S$ to be a $\calC^2$ surface in $\R^d$ and let $p\in S$ be a given point. The directional curvature $K_e(p,S)$ at $p$ along a unit vector $e$ lying in the tangent space to $S$ at $p$ is defined as the curvature at $p$ of the curve lying at the intersection of $S$ with the two-dimensional plane spanned by $e$ and a unit normal vector $\nu$ to $S$ at $p$. 

Suppose that $E\subset \R^d$ is a given set with $\calC^2$ boundary. For $p\in \partial E$  there is a small neighbourhood $B_r(p)$ around point $p$ such that the boundary $\partial E$ of $E$ can be represented as a graph of a $\calC^2$ function $f \colon B_r(p)\cap \R^{d-1}\to \R$ which has $p$ as a critical point, that is $f(p) =0$ and $\nabla f (p)=0$. The directional curvature of $E$ at $p\in \partial E$, denoted by $K_e(p,E)$, can be analytically determined via the following formula
\begin{align*}
K_e(p,E) =- D^2 f(p)e \cdot e,\quad e\in \Ss^{d-1}(p) \cap \nu(p)^{\perp}.
\end{align*}
Here $D^2 f(p)$ is the Hessian matrix of $f$ evaluated at $p$ and $x\cdot y$ stands for the scalar product of $x$ and $y$.
Since the matrix $D^2 f (p)$ is real and symmetric, it has $d-1$ real eigenvalues $\lambda_1,\ldots ,\lambda_{d-1}$ (they are called principal curvatures).
The mean curvature of $E$ at $p\in \partial E$ is denoted by $H(p,E)$ and it  is defined as the arithmetic mean of the principal curvatures (or equivalently as the normalized trace of the Hessian matrix, being equal, up to a constant, to the Laplacian of $f$), that is
\begin{align*}
H(p,E) = -\frac{\lambda_1 +\ldots +\lambda_{d-1}}{d-1} = - \frac{1}{d-1}\Delta f (p).
\end{align*}
If we take the average of the directional curvatures over the unit sphere we recover the mean curvature. More precisely, it holds
\begin{align}\label{mean-curv-through-direct}
H(p,E) = -\frac{1}{\kappa_{d-2}}\int_{\Ss^{d-1}(p) \cap \nu(p)^{\perp}}K_e(p,E)\, \calH^{d-2}(\ud e).
\end{align}
%where $\kappa_{d-2} = \calH^{d-2}(\Ss^{d-2})$ and $\calH^{d-2} $ stands for the $(d-2)$-dimensional Hausdorff measure and $\Ss^{d-1}(p)$ is the unit sphere in $\R^{d}$ centred at $p$.
We remark that the minus appearing in the definition of the directional (and mean) curvature makes the curvature of any sphere positive.
It turns out that the mean curvature can be also computed through the following averaging procedure (see e.g.\ \cite[p.\ 172]{MR3824212})
\begin{align}\label{aver-proc}
-c_d \Delta f (p) = \lim_{r\to 0}\frac{1}{r^{d+1}}\int_{B_r(p)}\widetilde{\chi}_E(y)\ud y , 
\end{align}
where $c_d>0$ is a constant depending only on the dimension $d$.

%
%which encompasses \eqref{fract-mean-curv}, \eqref{direc-curv-fract} and \eqref{def-J-curv} as special cases and it additionally allows us to study an anisotropic version of \eqref{fract-mean-curv}. 
%

\subsection*{Nonlocal curvature -- definition} We now present the definition of the nonlocal mean and directional curvature investigated in the present work. We consider a function $\phi \colon \Rd \to [0,\infty)$ such that 
\begin{equation}\label{eq:j-bound}
\phi(x)=\phi(-x) \quad \textnormal{and} \quad \phi(x) \leq j(x), \quad x\in\Rd,
\end{equation}
where $j\colon \Rd \to [0,\infty)$ is a given radial and radially non-increasing function
 satisfying\footnote{By $|x|$ we denote the Euclidean norm of $x\in \R^d$.}
\begin{equation}\label{eq:levy}
\int_{\Rd} (1 \wedge |x|^\beta) \, j(x) \, \ud x < \infty,\qquad \text{for some}\ \beta \in (0,1].
\end{equation}
For any set $E \subset \Rd$ with boundary of class $\calC^{1,\beta}$, the nonlocal mean curvature (corresponding to the kernel $\phi$) of $\partial E$ at $x \in \partial E$ is defined as
\begin{equation}\label{eq:mean}
H_{\phi} (x,E) =  \frac{1}{\kappa_{d-2}} \lim_{r\downarrow 0}  \int_{B_r(x)^c} \widetilde{\chi}_E(y) \phi(x-y) \, \ud y.
\end{equation}
%Evidently, \eqref{fract-mean-curv} and \eqref{def-J-curv} are special cases of \eqref{eq:mean}.
We can also  define a version of a nonlocal directional curvature related to the kernel $\phi$. Let $\nu=\nu(x)$ and let $e$ be any unit vector in the tangent space
of $\partial E$ at $x$, and let $\piu(x,e)$ be the two-dimensional open half-plane through $x$ spanned by $e$ and $\nu$, that is
$$
\piu(x,e) = \{y \in \Rd : y = x + \rho e + h \nu, \, \rho > 0, h \in \R\}.
$$
The nonlocal directional curvature (related to the kernel $\phi$) in direction $e$ is defined as follows
\begin{equation}\label{eq:dir-sym}
K_{\phi,\, e} (x,E) = \frac{1}{2} \lim_{r\downarrow 0} \int_{B_r(x)^c\, \cap \, \pi (x,e)} \widetilde{\chi}_E(y)\, |Px-Py|^{d-2} \phi(x-y) \, \ud y,
\end{equation}
where $\pi(x,e) = \piu(x,e) \cup \piu(x,-e)$ 
and 
$P$ stands for the projection
onto the line through $x$ in the direction of $e$.

%%% DO!!

The integrals in \eqref{eq:mean} and \eqref{eq:dir-sym} %and \eqref{eq:dir} 
are defined in the principal value sense and we show in Section \ref{sec:Prem} that because of cancellations these objects exist and are finite. We remark that the nonlocal curvature defined at \eqref{eq:mean} depends on the behaviour of the kernel $\phi$ only near the origin and thus we could consider the measure $\nu(\ud x) = \phi(x) \ud x + \overline{\nu}(\ud x)$ instead, where $\overline{\nu}$ is a measure in $\R^d$ such that $\operatorname{supp} (\overline{\nu}) \cap B_R = \emptyset$ for some ball $B_R$ centred at the origin and of radius $R>0$.

\subsection*{Approximation result} One of the main contributions of the present paper is the following approximation result for the nonlocal curvatures related to the kernel $\phi$ in the case when it takes a specific form. Let $\{j_{\eps}\}_{\eps>0}$ be a family of radial and radially non-increasing positive
functions that satisfy condition \eqref{eq:levy} with $\beta =1$ and let $\{g_{\eps}\}_{\eps>0}$ be a family of non-negative continuous functions defined on $\Ss^{d-1}$ such that 
\begin{align*}
g_{\eps}(-x) = g_{\eps} (x)\qquad \text{and}\qquad \lim_{\varepsilon \downarrow 0}g_{\eps}(e)=g(e)\quad
\text{uniformly in } e\in \Ss^{d-1},
\end{align*}
for a given continuous function $g \colon \Ss^{d-1} \to [0,\infty)$. We consider the following family of kernels 
$$
\phi_{\eps}(x) = g_{\eps}\left(\frac{x}{|x|}\right) j_{\eps}(x),  \qquad \eps >0.
$$
Set $C_{\eps} = \int_{\Rd} \left(1 \wedge |x|\right) \phi_{\eps}(x)\, \ud x$.
Under the assumption that the measures $ C_\eps^{-1} (1 \wedge |x|)\,  \phi_{\eps}(x) \, \ud x$ 
 concentrate at the origin (see condition \eqref{tight-zero} for details),
 we prove (see Theorem \ref{thm2}) that if
$E_n\to E$ in $\Reg$, then for any sequence $\varepsilon_n\downarrow 0$ and for every $x\in \partial E\cap \partial E_n$ it holds
\begin{align}\label{conv-nonlocal-mean-curv}
\lim_{n\to \infty} {C}_{\phi_{\eps_n}}^{-1}  K_{\phi_{\eps_n},e} (x,E_{n})
= C_g^{-1}g(e,0) K_{e}(x,E), \quad e\in \Ss^{d-1}(x) \cap \nu(x)^{\perp},
\end{align}
and 
\begin{align}\label{conv-nonlocal-direct-curv}
\lim_{n\to \infty} {C}_{\phi_{\eps_n}}^{-1}  H_{\phi_{\eps_n}} (x,E_n)
=  \frac{1}{C_g\kappa_{d-2}} \int_{e\in \Ss^{d-1}(x) \cap \nu(x)^{\perp}} g(e,0) K_{e}(x,E)  \calH^{d-2}(\ud e),
\end{align}
where $C_g=\int_{\Ss^{d-1}}g(\theta) \calH^{d-1}(\ud \theta)$.
Convergence of anisotropic $\alpha$-fractional curvatures displayed in \eqref{anisotropic-result-intro} is a direct consequence of \eqref{conv-nonlocal-direct-curv}, see the proof of Theorem \ref{thm:conv-anisotropic-curv} for details. 

\subsection*{Geometric flows and stability}
Another goal of the present work is to study geometric flows determined through the introduced model of nonlocal curvature. 
Recently the authors of \cite{Chambolle-Archiv} developed a unified approach for the study of nonlocal curvatures and corresponding geometric flows. In particular, they formulated conditions for a proper model of a nonlocal curvature such that the corresponding level-set Cauchy problem admits viscosity solutions. In this paper we undertake this approach and we show in Section \ref{subsec:axioms} that the nonlocal curvature $H_\phi$ satisfies the basic axioms of \cite{Chambolle-Archiv} and thus we can establish (see Section \ref{subsec:existence}) existence and uniqueness of continuous viscosity solutions to the following level-set equation
\begin{align}\label{Cauchy-eq-intro}
\begin{cases}
 \partial_t u(x,t) + |\nabla u(x,t)| H_\phi(x,  \{ y: \, u(y,t)\ge u(x,t)\}) \,=\,0 \\ 
 u(\cdot,0) = u_0(\cdot).
\end{cases}
\end{align}
Here $u_0 \colon \R^d\to\R$ is a given continuous function which is constant on the complement of a compact set.
Adopting another procedure from \cite{Chambolle-Archiv} we also show (see Section \ref{subsec:variation}) that the curvature $H_\phi$ is the first variation of the corresponding nonlocal perimeter which was recently studied in \cite{Cygan-Grzywny-Per}. 

Our convergence result \eqref{conv-nonlocal-mean-curv} allows us to adopt the approach developed in \cite{Cesaroni-Stability} which concerns the problem of stability of geometric flows generated by nonlocal curvatures. As already mentioned above, there exists a unique viscosity solution to \eqref{Cauchy-eq-intro} if we replace $H_\phi$ with $H_{\phi_{\eps_n}}$, so it makes sense to ask a question concerning convergence of such solutions. In Theorem \ref{thm:stability} we give a full answer to this question as we show that these viscosity solutions converge locally uniformly to the unique viscosity solution to \eqref{Cauchy-eq-intro} with curvature $H_\phi$ being replaced with the classical mean curvature $H$.

\subsection*{Nonlocal fractional and non-singular curvatures}
We briefly discuss another versions of nonlocal curvatures that are known in the literature and we comment on the links between these definitions and our approach. 
A prominent example of a nonlocal  mean curvature at $x\in E$ for a given set $E\subset\Rd$ with boundary $\partial E$ of class $\calC^2$ was proposed in \cite{Abatan-Valdin-curv} (see also \cite{Caffarelli_Savin} and \cite{Imbert}), where the authors adopted the procedure from \eqref{aver-proc} in the fractional setting. This is the so-called fractional mean curvature and it is defined as
\begin{align}\label{fract-mean-curv}
H_{\alpha}(x,E) = \frac{1}{\kappa_{d-2}} \lim_{r\downarrow 0}\int_{B_r(x)^c} \frac{\widetilde{\chi}_E(y)}{|x-y|^{d+\alpha}} \, \ud y, \quad x \in \partial E,\ \alpha \in (0,1).
\end{align}
Moreover, a version of the directional fractional curvature was also introduced in \cite{Abatan-Valdin-curv} and the authors established the fractional counterpart of \eqref{mean-curv-through-direct}. 
The fractional directional curvature is defined as
\begin{equation}\label{direc-curv-fract}
K_{\alpha,\, e} (x,E) = \lim_{r\downarrow 0} \int_{B_r(x)^c\, \cap \piu (x,e)} \widetilde{\chi}_E(y)\, |Px-Py|^{d-2} |x-y|^{-d-\alpha} \, \ud y,
\end{equation}
where 
$P$ stands for the projection
onto the line through $x$ in the direction of $e$.

Another variant of nonlocal curvature defined through a non-singular and integrable kernel was introduced in \cite{Rossi_paper_1} (see also \cite{Rossi_book}). More precisely, if $J \colon \Rd \to [0,\infty)$ is a radial and integrable function then the corresponding $J$-mean curvature is defined as
\begin{align}\label{def-J-curv}
H_J(x,E) = \int_{\Rd} \widetilde{\chi}_E(y) J(x-y) \, \ud y.
\end{align} 
It is worth noting that the integral at \eqref{def-J-curv} is well-defined  for all $x\in \R^d$ and not only along the boundary of the set $E$.

Important incentive for our work was the fact that one can approximate the classical mean (and directional) curvature with the aid of nonlocal fractional curvatures under appropriate rescaling. 
We may assume, without loss of generality, that we work in the normal system of coordinates, so that $x=0$ and the tangent space of $\partial E$ at $0$ is the hyperplane $\{x_d=0\}$. The unit normal vector at the origin can be taken to be $\nu =(0,\ldots ,0,1)$. 
If $x=0$ then we abandon $x$ when writing the symbols of  curvatures of $\partial E$, that is we write $H(E)$ for the classical mean curvature of $\partial E$ at $0$, $H_\alpha (E)$ for the fractional curvature of $\partial E$ at $0$ etc. 
For $\alpha$-fractional curvatures  
it was found in \cite[Theorem 12]{Abatan-Valdin-curv} that for any $E\subset \Rd$ with $\partial E \in \calC^2$ it holds
\begin{equation}\label{conv-fract-direct-to-class}
\lim_{\alpha \uparrow 1} (1-\alpha) K_{\alpha,e}(E) =K_e(E), \qquad e \in \Ss^{d-2},
\end{equation}
and similarly,
\begin{equation}\label{conv-fract-mean-to-class}
\lim_{\alpha \uparrow 1} (1-\alpha) H_{\alpha}(E) = H(E).
\end{equation}
In the case when $\alpha \downarrow 0$, the asymptotic behaviour of the fractional mean curvature was found in \cite[(B.1)]{MR3824212} for bounded sets with $\calC^2$ boundary. More precisely, if $E\subset B_R$ for some ball $B_R$ of radius $R>0$, then
\begin{equation}\label{conv-fract-at-0}
\alpha \kappa_{d-2} H_{\alpha} (E) = \kappa_{d-1} + \alpha \left(\int_{B_R} \frac{\tilde{\chi}_E(y)}{|y|^{d+\alpha}} \ud y - \kappa_{d-1} \log R\right) + o(\alpha),\quad \alpha \downarrow 0.
\end{equation}
Moreover, uniform versions of \eqref{conv-fract-mean-to-class} and \eqref{conv-fract-at-0} have been recently established in \cite{Cesaroni-Stability}, that is the authors introduced a mode of uniform convergence of nonlocal curvatures and allowed the set $E$ to evolve along a given sequence of diffeomorphisms.   
To be more precise, 
in \cite[Theorem 4.10]{Cesaroni-Stability} it was proved that for any sequence $\alpha_n \uparrow 1$, and if $E_n\to E$ in $\Reg$, and $0\in \partial E\cap \partial E_n$, it holds
\begin{align*}
\lim_{n\to \infty} (1-\alpha_n)H_{\alpha_n}(E_n) = H(E).
\end{align*} 
In \cite[Theorem 4.7]{Cesaroni-Stability} convergence in \eqref{conv-fract-at-0} was extended to the same uniform setting. 

We remark that an approximation result which is analogous to \eqref{conv-fract-mean-to-class} in the case of curvatures governed by non-singular kernels was established in \cite{Rossi_paper_1}. It was proved that if $J$ is non-negative, radially symmetric, continuous almost everywhere and compactly supported, then for any $E\subset \Rd$ with $\partial E$ of class $\calC^2$
the following convergence holds
\begin{equation}\label{conv_j-curv-to-class}
\lim_{\eps \da 0} \frac{C_J}{\eps} H_{J_{\eps}}(E) =  (d-1)\, H(E),
\end{equation}
where $J_{\eps}(x) = \eps^{-d} J(x/\eps)$ and 
$C_J = 2\left(\int_{\Rd} J(x)|x_d| \ud x\right)^{-1}$.

One can show that \eqref{conv-fract-direct-to-class} is a special case of \eqref{conv-nonlocal-direct-curv} and similarly, \eqref{conv-fract-mean-to-class} and \eqref{conv_j-curv-to-class} can be derived from \eqref{conv-nonlocal-mean-curv}.
If the (normalized) measures $ (1 \wedge |x|)\,  \phi_{\eps}(x) \, \ud x$ concentrate at infinity (see condition \eqref{cond-Lambda-one}) we obtain in the same setting  (see Theorem \ref{thm3}) that
\begin{align*}
\lim_{n\to \infty}C_{\eps_n}^{-1}H_{\phi_{\varepsilon_n}} (E_n)=\kappa_{d-2}^{-1},
\end{align*}
which can be used to recover the approximation from \eqref{conv-fract-at-0} and the part of \cite[Theorem 4.7]{Cesaroni-Stability} without the correction term. 

\subsection*{Other related works}
Geometric flows constitute an active  area of research 
in the modern calculus of variations. Classical mean curvature flows are treated in-depth in \cite{Mantegazza} and for fractional setting we refer to \cite{Imbert}, \cite{Chambolle-Interfaces}, \cite{Valdinoci-Flattering}, \cite{Julin} and \cite{Cesaroni-Stability}, and for models related to dislocation dynamics see \cite{JEMS} and \cite{Barles}. 

Convergence of anisotropic nonlocal curvatures defined through a given interaction kernel was recently studied in \cite{Cesaroni-Pagliari-flows}. The authors developed another interesting approach based on the theory of De Giorgi's barriers for evolution equations and they established convergence of rescaled anisotropic nonlocal curvatures to the classical anisotropic mean curvature. They also managed to show convergence of the corresponding geometric flows. To compare our assumptions with those of \cite{Cesaroni-Pagliari-flows}, we remark that we do not assume any regularity of the kernel $\phi$ apart from being controlled by the isotropic and monotone profile $j$ which satisfies \eqref{eq:levy} with $\beta =1$. The authors in \cite{Cesaroni-Pagliari-flows} assume that their interaction kernel (and its gradient) satisfy a chain of integrability conditions to assure that the corresponding curvatures are well-defined for sets with $\calC^{1,1}$ boundary.

\section{Preliminary results}\label{sec:Prem}
We start with the observation that we can define the nonlocal directional curvature corresponding to the isotropic profile $j$ similarly as in \eqref{direc-curv-fract}, that is
\begin{align}\label{eq:dir}
\Ku_{j,e} (x,E)= \lim_{r\downarrow 0} \int_{B_r(x)^c\, \cap \, \piu (x,e)} \widetilde{\chi}_E(y)\,|Px-Py|^{d-2} j(x-y) \, \ud y.
\end{align}
We note that for general $\phi$ satisfying \eqref{eq:j-bound}
the quantity in \eqref{eq:dir} may be not well-defined.
We additionally observe that $\Ku_{j,-e}(x,E)$ is, in general, not equal to $\Ku_{j,e}(x,E)$ which differs  from the classical case as then $K_{-e}(x,E)=K_e(x,E)$. 
We also note that $K_{j,e}$ may be seen as a symmetrization of $\Ku_{j,e}$ in the following manner 
\begin{align*}
K_{j,e} (x,E)= \frac{\Ku_{j,e}(x,E)+\Ku_{j,-e}(x,E)}{2}.
\end{align*}
We consider the normal system of coordinates in which $x=0$ and the tangent space of $\partial E$ at $0$ is the horizontal hyperplane $\{(x_1,\ldots ,x_d)\in \R^d:\, x_d = 0\}$. The normal unit vector is chosen as 
\begin{equation}\label{eq:v}
v =(0, \ldots, 0, 1).
\end{equation}
We also notice that the plane $\pi(e):=\pi(0,e)$ is endowed with the induced two-dimensional Lebesgue measure and the integration over
$\pi(e)$ is governed by the formula
\begin{equation}\label{eq:int}
\int_{\pi(e)} F(y) \, \ud y = \int_{\R^2}F(\rho e + h v) \, \ud h\, \ud \rho.
\end{equation}
In the fixed system of coordinates the objects defined at \eqref{eq:mean}, \eqref{eq:dir-sym} and \eqref{eq:dir} read as follows
\begin{equation}\label{eq:mean0}
H_{\phi}(E) =\frac{1}{\kappa_{d-2}}\lim_{r\downarrow 0} \int_{B_r^c} \widetilde{\chi}_E(y) \phi(y) \, \ud y,
\end{equation}
\begin{equation}\label{eq:dir-sym0}
K_{\phi,\, e} (E) = \frac{1}{2} \lim_{r\downarrow 0} \int_{B_r^c\, \cap \, \pi (e)} \widetilde{\chi}_E(y) \phi(y) |y'|^{d-2} \, \ud y,
\end{equation}
and
\begin{equation}\label{eq:dir0}
\Ku_{j,\, e} (E) = \lim_{r\downarrow 0} \int_{B_r^c\, \cap \, \piu (e)} \widetilde{\chi}_E(y) j(y) |y'|^{d-2} \, \ud y,
\end{equation}
where we use notation $\Rd \ni x = (x', x_d) \in \R^{d-1} \times \R$.

\begin{proposition}\label{curv-well-defined}
For any $E\subset \R^d$ with boundary of class $\calC^{1,\beta}$ and such that $E\subset \overline{\operatorname{int}(E)}$ the limits in \eqref{eq:mean0}, \eqref{eq:dir-sym0} and \eqref{eq:dir0} exist and are finite. 
\end{proposition}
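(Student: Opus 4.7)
The plan is to compare $E$ near the origin with the tangent half-space $H^- = \{y_d<0\}$, to exploit the even symmetry $\phi(-y)=\phi(y)$, and to absorb the resulting error through the integrability condition \eqref{eq:levy}. Because $\partial E$ is of class $\calC^{1,\beta}$ with outer normal $v$ at $0$ and $E\subset\overline{\operatorname{int}(E)}$, for some $r_0>0$ the set $\partial E\cap B_{r_0}$ is the graph of a $\calC^{1,\beta}$ function $f$ satisfying $f(0)=0$, $\nabla f(0)=0$ and $|f(y')|\le C|y'|^{1+\beta}$, and $E\cap B_{r_0}$ lies below this graph. Consequently, the symmetric difference $E\triangle H^-$ meets $B_{r_0}$ only in the thin cuspidal layer
\[
T \;=\; B_{r_0}\cap \bigl\{(y',y_d)\in \R^{d-1}\times \R:\; |y_d|\le C|y'|^{1+\beta}\bigr\}.
\]

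For the mean curvature \eqref{eq:mean0} I would first separate the tail contribution on $B_{r_0}^c$, which is finite and independent of $r$ by $\phi\le j$ and \eqref{eq:levy}. On the annulus $B_{r_0}\setminus B_r$ I would decompose $\widetilde{\chi}_E = \widetilde{\chi}_{H^-} + (\widetilde{\chi}_E-\widetilde{\chi}_{H^-})$. The $\widetilde{\chi}_{H^-}$ piece vanishes identically because the map $y\mapsto -y$ preserves the annulus and preserves $\phi$ by \eqref{eq:j-bound}, while reversing the sign of $\widetilde{\chi}_{H^-}$. The remaining piece is supported in $T$ with integrand bounded by $2j$; passing to polar coordinates $y'=\rho\theta$, $\theta\in \Ss^{d-2}$, and using radial monotonicity of $j$ (so that $j(y)\le j(\rho)$ throughout $T$), one obtains
\[
\int_T j(y)\,\ud y \;\le\; 2C\kappa_{d-2}\int_0^{r_0}\rho^{d-1+\beta}\,j(\rho)\,\ud\rho,
\]
which is finite precisely because \eqref{eq:levy} holds with the same exponent $\beta$. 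Dominated convergence then yields existence and finiteness of the limit.

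For the symmetric directional curvature $K_{\phi,e}$ the same scheme applies inside the plane $\pi(e)$, parameterised as $y=\rho e+hv$ with $(\rho,h)\in\R^2$: the map $y\mapsto -y$ preserves $\pi(e)$, $\phi$, and the weight $|y'|^{d-2}=|\rho|^{d-2}$, so the cancellation against $\widetilde{\chi}_{H^-}$ persists, and the layer estimate on $T\cap \pi(e)$ again reduces to $\int_0^{r_0}\rho^{d-1+\beta}j(\rho)\,\ud\rho<\infty$. The step I expect to be the main obstacle is the one-sided object $\Ku_{j,e}$ in \eqref{eq:dir0}: since the integration is performed only over the half-plane $\piu(e)$, the antipodal symmetry $y\mapsto -y$ is no longer available. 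I would replace it with the reflection $h\mapsto -h$, which preserves $\piu(e)$ and the weight $\rho^{d-2}$, preserves the \emph{isotropic} kernel $j$ by radiality, and still reverses the sign of $\widetilde{\chi}_{H^-}$, so the cancellation goes through; the remaining layer integrability bound is identical. In all three cases the finiteness is ultimately driven by the matching of the H\"older exponent $\beta$ in the regularity of $\partial E$ with the exponent $\beta$ in \eqref{eq:levy}.
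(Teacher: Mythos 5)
Your proof is correct and follows essentially the same route as the paper's: there the cancellation is implemented by replacing $\widetilde{E}_R$ with $-\widetilde{E}_R$ using $\phi(-y)=\phi(y)$ and cancelling against $E_R$, which leaves exactly your thin cuspidal layer $\{|y_d|\le M|y'|^{1+\beta}\}$, estimated by the same polar-coordinate computation against \eqref{eq:levy}. Your reflection $h\mapsto -h$ together with radiality of $j$ for the one-sided curvature $\Ku_{j,e}$ correctly supplies the detail the paper dismisses as ``similar''.
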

\begin{proof}
We only prove that limits in \eqref{eq:mean0} and \eqref{eq:dir-sym0} exist as the reasoning for \eqref{eq:dir0} is similar. We remark that our proof is similar to that of \cite[Lemma 7]{Abatan-Valdin-curv} but it includes numerous important innovations and adjustments.
Without loss of generality we can consider the case when $E$ is closed (we remark that Lebesgue measure of its boundary $\partial E$ is zero) and it can be represented as the subgraph of a given function $f\in \calC^{1,\beta}(\R^{d-1})$ such that $f(0)=0$ and $\nabla f(0)=0$, so that we can write 
\begin{align}\label{E-subgraph}
E =\{ (y',y_d)\in \R^{d-1}\times \R:\, y_d \leq f(y')\}.
\end{align}
Since $E$ has boundary of class $\calC^{1,\beta}$, we can find a small neighbourhood of the origin with two tangent internal and external paraboloidal domains. More precisely, there exists a ball $B_R$ and a constant $M>0$ such that
\begin{align}\label{eq:1}
E \cap B_R \subset \{(y', y_d) \in \Rd: y_d \leq M|y'|^{1+\beta}\}\ \mathrm{and}\
E^c \cap B_R \subset \{(y', y_d) \in \Rd: y_d \geq -M|y'|^{1+\beta}\}.
\end{align}
We set
$E_R = B_R \cap \{y_d \leq f(y')\}$ and 
$\widetilde{E}_R = B_R \cap \{y_d > f(y')\}$.
We remark that it suffices to restrict our attention to the integral over the ball $B_R$ as outside of the ball the integral is evidently finite. 
For any $0<r<R$ we have
\begin{align*}
\int_{B_R \cap B_{r}^c} (\chi_{E^c}(y) -\chi_E(y)) \phi(y) \ud y &= \int_{\widetilde{E}_R\setminus B_{r}} \phi(y) \ud y 
- \int_{E_R\setminus B_{r}} \phi(y) \ud y  \\
&= \int_{-\widetilde{E}_R \setminus B_{r}} \phi(y) \ud y - \int_{E_R \setminus B_{r}} \phi(y) \ud y\\
&= \int_{((-\widetilde{E}_R)\setminus {E}_R)\setminus B_{r}} \phi(y) \ud y - \int_{(E_R\setminus (-\widetilde{E}_R))\setminus B_{r}} \phi(y) \ud y\\
&\stackrel{r \da 0}{\longrightarrow} \int_{(-\widetilde{E}_R)\setminus {E}_R} \phi(y) \ud y - \int_{E_R\setminus (-\widetilde{E}_R)} \phi(y) \ud y,
\end{align*}
where in the second equality we used the fact that $\phi (y)=\phi (-y)$ and in the last step we applied the monotone convergence theorem. We observe that 
\begin{align}\label{eq:f-bound-B_R}
f(y')\leq M|y'|^{1+\beta},\quad y\in B_r
\end{align}
and whence\footnote{Throughout we slightly abuse notation by writing $j(r)$ in place of $j(x)$ whenever $|x|=r$; and similarly we write $j(x')$ instead of $j((x',0))$.}, by \eqref{eq:j-bound} and the fact that $j$ is radially non-increasing,
\begin{align*}
\int_{E_R \setminus (-\widetilde{E}_R)} \phi(y) \ud y 
&=\int_{B_R \cap \{-f(-y') \leq y_d < f(y')\}} \phi(y) \ud y
\leq \int_{B_R \cap \{ |y_d|\leq M |y'|^{1+\beta}\} } j(y) \ud y \\
&\leq \int_{|y'|<R} \int_{ |y_d|\leq M |y'|^{1+\beta}}  j(y) \ud y_d\, \ud y'
\leq 2 M \int_{|y'|<R} |y'|^{1+\beta} j(y') \ud y'  \\
&= 2 M \kappa_{d-2} \int_0^R \rho^{d+\beta-1} j(\rho) \ud \rho 
=2M \frac{\kappa_{d-2}}{\kappa_{d-1}} \int_{B_R} |x|^{\beta} j(x) \ud x<\infty.
\end{align*}
Similarly, we have
\begin{align*}
\int_{(-\widetilde{E}_R)\setminus E_R} \phi(y) \ud y &
\leq \int_{B_R \cap \{ |y_d|\leq M |y'|^{1+\beta}\} } j(y) \ud y 
\leq 2M \frac{\kappa_{d-2}}{\kappa_{d-1}} \int_{B_R} |x|^{\beta}j(x) \ud x <\infty.
\end{align*}
This yields the finiteness of the quantity in \eqref{eq:mean0}. 

We next deal with \eqref{eq:dir-sym0}.
For any $0<r<R$ we have
\begin{align*}
\int_{(B_R \cap \pi(e))\setminus B_{r}} &(\chi_{E^c}(y)-\chi_E(y)) \phi(y) |y'|^{d-2} \ud y \\
&= \int_{(\widetilde{E}_R \cap \pi(e)) \setminus B_{r}} \phi(y) |y'|^{d-2} \ud y
-\int_{(E_R \cap \pi(e)) \setminus B_{r}} \phi(y) |y'|^{d-2} \ud y \\
&= \int_{(-\widetilde{E}_R \cap \pi(e)) \setminus B_{r}} \phi(y) |y'|^{d-2} \ud y
-\int_{(E_R \cap \pi(e)) \setminus B_{r}} \phi(y) |y'|^{d-2}\ud y \\
&=\int_{\left(((-\widetilde{E}_R)\setminus {E}_R)\cap \pi(e)\right)\setminus B_{r}} \phi(y) |y'|^{d-2} \ud y
-\int_{\left((E_R\setminus (-\widetilde{E}_R)) \cap \pi(e)\right) \setminus B_{r}} \phi(y) |y'|^{d-2} \ud y \\
&\stackrel{r \da 0}{\longrightarrow} \int_{\left((-\widetilde{E}_R)\setminus E_R\right) \cap \pi(e)} \phi(y) |y'|^{d-2} \ud y
-\int_{\left(E_R\setminus (-\widetilde{E}_R) \right)\cap \pi(e)} \phi(y)|y'|^{d-2} \ud y,
\end{align*}
where in the second equality we used the fact that $\phi (y)=\phi (-y)$ and in the last step we applied the monotone convergence theorem.  Using \eqref{eq:f-bound-B_R}, \eqref{eq:j-bound} and the fact that $j$ is radially non-increasing we obtain
\begin{align*}
\int_{\left(E_R \setminus (-\widetilde{E}_R)\right) \cap \pi(e)} \phi(y) |y'|^{d-2} \ud y 
&=\int_{B_R \cap \{-f(-y') \leq y_d < f(y')\} \cap \pi(e)} \phi(y) |y'|^{d-2} \ud y\\
&\leq \int_{B_R \cap \{ |y_d|\leq M |y'|^{1+\beta}\} \cap \pi(e)} j(y) |y'|^{d-2} \ud y \\
&\leq \int_{B_R \cap \{ |y_d|\leq M |y'|^{1+\beta}\} \cap \pi(e)} j(y') |y'|^{d-2} \ud y\\
&\leq \int_{-R}^{R} \int_{-M\rho^{1+\beta}}^{M\rho^{1+\beta}} j(\rho e) |\rho|^{d-2} \ud h \ud \rho \\
&\leq 4M \int_0^R j(\rho e) \rho^{d+\beta-1} \ud \rho
= \frac{4M}{\kappa_{d-1}} \int_{B_R} j(x) |x|^{\beta} \ud x <\infty.
\end{align*}
Similarly, we have
\begin{align*}
\int_{\left((-\widetilde{E}_R)\setminus E_R\right) \cap \pi(e)} \phi(y) |y'|^{d-2} \ud y &
\leq \int_{B_R \cap \{ |y_d|\leq M |y'|^{1+\beta}\} \cap \pi(e)} j(y) |y'|^{d-2} \ud y \\
&\leq \frac{4M}{\kappa_{d-1}} \int_{B_R} |x|^{\beta}j(x) \ud x <\infty
\end{align*}
and the proof is finished.
\end{proof}

The next result reveals that in the present context the nonlocal mean curvature can be recovered by integrating the nonlocal directional curvature over the unit sphere, analogously as in the classical case displayed in \eqref{mean-curv-through-direct}.

\begin{theorem}\label{thm:formula}
For any $E\subset \R^d$ with boundary of class $\calC^{1,\beta}$ and such that $E\subset \overline{\operatorname{int}(E)}$  it holds
$$
H_{\phi} (E) = \frac{1}{\kappa_{d-2}} \int_{\Ss^{d-2}} K_{\phi,\, e} (E)\,  \calH^{d-2}(\ud e).
$$
\end{theorem}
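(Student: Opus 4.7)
The plan is to derive the identity from a disintegration of the $d$-dimensional integral in \eqref{eq:mean0} into the sphere $\Ss^{d-2}$ paired with the two-dimensional planes $\pi(e)$. Writing $y=(y',y_d)\in\R^{d-1}\times\R$ and using polar coordinates $y'=\rho\theta$ with $\rho\geq 0$ and $\theta\in\Ss^{d-2}$ gives $\ud y'=\rho^{d-2}\,\ud\rho\,\calH^{d-2}(\ud\theta)$. A symmetrisation via the antipodal map $\theta\mapsto-\theta$ extends the integration in $\rho$ to all of $\R$ at the cost of the weight $|\rho|^{d-2}$ and an overall factor $1/2$, so that for any measurable $F\colon\R^d\to\R$ for which the expressions make sense one has
\begin{equation*}
\int_{B_r^c} F(y)\,\ud y \,=\, \frac{1}{2}\int_{\Ss^{d-2}}\left(\int_{\pi(\theta)\cap B_r^c} F(y)\,|y'|^{d-2}\,\ud y\right)\calH^{d-2}(\ud\theta).
\end{equation*}
On the right the inner integral uses the parametrisation $(\rho,h)\mapsto\rho\theta+h v$ of $\pi(\theta)$ (cf.\ \eqref{eq:int}) under which $|y'|=|\rho|$, and the condition $\rho^2+h^2>r^2$ isolating $\pi(\theta)\cap B_r^c$ matches $B_r^c$ in the disintegrated coordinates.

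Applying this identity to $F=\widetilde{\chi}_E\,\phi$ and dividing by $\kappa_{d-2}$, the proof is reduced to exchanging the limit $r\downarrow 0$ with the integration over $\Ss^{d-2}$. To justify this I would invoke dominated convergence. Pointwise convergence of the inner integral to $2K_{\phi,e}(E)$ follows from Proposition~\ref{curv-well-defined} together with \eqref{eq:dir-sym0}. A uniform (in both $r$ and $\theta$) dominator is constructed along the lines of the proof of Proposition~\ref{curv-well-defined}: choose $R$ small enough for $\partial E$ to admit the graph representation \eqref{E-subgraph} together with the paraboloidal trap \eqref{eq:1}, and split the planar integral into the parts over $B_R$ and $B_R^c$. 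On the inner part, the cancellation between $E_R\setminus(-\widetilde E_R)$ and $(-\widetilde E_R)\setminus E_R$ combined with \eqref{eq:1} produces a bound of the form $C_1\int_{B_R}|x|^\beta j(x)\,\ud x$, independent of $\theta$; on the outer part, using $|y'|^{d-2}\leq|y|^{d-2}$ and two-dimensional polar coordinates in $\pi(\theta)$, the integral is controlled by $C_2\int_R^\infty j(s)\,s^{d-1}\,\ud s$, again independent of $\theta$ and finite by \eqref{eq:levy}. Since $\calH^{d-2}(\Ss^{d-2})=\kappa_{d-2}<\infty$, this constant bound is integrable over the sphere.

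Once the interchange is justified, the inner limit equals $2K_{\phi,e}(E)$ by \eqref{eq:dir-sym0}, the two factors of $2$ cancel, and one obtains the claimed formula $H_\phi(E)=\kappa_{d-2}^{-1}\int_{\Ss^{d-2}}K_{\phi,e}(E)\,\calH^{d-2}(\ud e)$. The only delicate ingredient is the $\theta$-uniform domination in the previous step, but since the estimates from the proof of Proposition~\ref{curv-well-defined} manifestly do not depend on the direction $e$, only a brief inspection is needed to repurpose them here.
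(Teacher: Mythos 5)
Your proposal is correct and follows essentially the same route as the paper: disintegrating the volume integral over the planes $\pi(e)$ via polar coordinates in $y'$ with the weight $|y'|^{d-2}$ (the paper states this as $\int_{\Ss^{d-2}}\int_{\pi(e)}|y'|^{d-2}G\,\ud y\,\calH^{d-2}(\ud e)=2\int_{\Rd}G$, which is your identity rearranged), and then interchanging the limit $r\downarrow 0$ with the spherical integral by dominated convergence, using the $e$-uniform bounds from the proof of Proposition~\ref{curv-well-defined}. No substantive differences.
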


\begin{proof}
The proof is similar to that of \cite[Theorem 8]{Abatan-Valdin-curv} and it is based on formula \eqref{eq:int} where we apply it to the function $y \mapsto |y'|^{d-2}G(y)$ for $G \in L^1(\Rd)$. We work again in the normal coordinates chosen at the beginning of the present section  and we abuse  notation by identifying  $e = (e_1, \ldots, e_{d-1}, 0) \in \Rd$
with $(e_1, \ldots, e_{d-1}) \in \R^{d-1}$, which together with our choice \eqref{eq:v} allows us to write
$
\pi(e) \ni y = (\rho e, h)
$.
With this notation formula \eqref{eq:int} reads as follows
\begin{align*}
\int_{\pi(e)} |y'|^{d-2} G(y) \, \ud y = \int_{\R} \int_{\R} |\rho|^{d-2} G(\rho e, h) \, \ud h \, \ud \rho\quad \mathrm{a.s.\ for\ } e \in \Ss^{d-2}.
\end{align*}
We integrate the identity above over $\Ss^{d-2}$ and by applying the polar coordinates formula in $\R^{d-1}$ we arrive at
\begin{align*}
\int_{\Ss^{d-2}}\int_{\pi(e)} |y'|^{d-2} G(y) \, \ud y \calH^{d-2}(\ud e) 
&= \int_{\R} \int_{\Ss^{d-2}} \int_{\R} |\rho|^{d-2} G(\rho e, h)  \ud \rho \calH^{d-2}(\ud e) \ud h \\
&= 2 \int_{\R} \int_{\Ss^{d-2}} \int_0^{\infty} \rho^{d-2} G(\rho e, h)  \ud \rho\calH^{d-2}(\ud e) \ud h \\
&= 2 \int_{\R}\int_{\R^{d-1}} G(x', h) \, \ud x' \ud h = 2\int_{\Rd} G(x) \, \ud x.
\end{align*}
We apply the formula above to the function $G(y)= \chi_{B_r^c}(y) \widetilde{\chi}_E(y)\phi(y)$ and this yields
\begin{align*}
\int_{\Ss^{d-2}} K_{\phi,\, e} (E) \,  \calH^{d-2}(\ud e) 
&=\frac{1}{2}\int_{\Ss^{d-2}} \lim_{r\da 0} \, \int_{\pi(e)} \chi_{B_r^c}(y)\widetilde{\chi}_E(y) \phi(y) |y'|^{d-2} \, \ud y \calH^{d-2}(\ud e) \\
&=\frac{1}{2} \lim_{r \da 0} \int_{\Ss^{d-2}} \int_{\pi(e)} \chi_{B_r^c}(y) \widetilde{\chi}_E(y) \phi(y) |y'|^{d-2} \, \ud y \calH^{d-2}(\ud e)\\
&= \lim_{r \da 0}  \int_{\Rd}  \chi_{B_r^c}(y) \widetilde{\chi}_E(x) \phi(x) \, \ud x 
= \kappa_{d-2} H_{\phi} (E),
\end{align*}
where we used Lebesgue's dominated convergence theorem
to interchange the order of the limit and the integral. This can be justified similarly as in  the proof of Proposition \ref{curv-well-defined}, that is one can show that for any $0<r<R$ we have
\begin{align*}
&\left| \int_{ \pi(e)\setminus B_r} (\chi_{E^c}(y)-\chi_E(y)) \phi(y) |y'|^{d-2} \ud y \right| 
\leq \frac{8M}{\kappa_{d-1}} \int_{\R^d} (1\wedge |x|^\beta )j(x) \ud x <\infty
\end{align*}
and the proof is completed.
\end{proof}

Proceeding similarly as in the proof of Theorem \ref{thm:formula} we can show the following corollary.
\begin{corollary}
For any $E\subset \R^d$ with boundary of class $\calC^{1,\beta}$ and such that $E\subset \overline{\operatorname{int}(E)}$ it holds
$$
H_{j} (E) = \frac{1}{\kappa_{d-2}} \int_{\Ss^{d-2}} \Ku_{j\, e} (E)\,  \calH^{d-2}(\ud e).
$$
\end{corollary}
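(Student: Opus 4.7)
The plan is to deduce the corollary from Theorem~\ref{thm:formula} applied to the (symmetric, radial) kernel $\phi=j$, by means of a short antipodal-symmetry argument on $\Ss^{d-2}$.

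First, I would recall from the discussion preceding Proposition~\ref{curv-well-defined} the identity
\begin{align*}
K_{j,e}(E)=\tfrac{1}{2}\bigl(\Ku_{j,e}(E)+\Ku_{j,-e}(E)\bigr),
\end{align*}
which is immediate from the decomposition $\pi(e)=\piu(e)\cup\piu(-e)$ (disjoint up to a set of two-dimensional Lebesgue measure zero) together with the definitions \eqref{eq:dir-sym0} and \eqref{eq:dir0}; both one-sided limits exist by Proposition~\ref{curv-well-defined}. Since $\calH^{d-2}$ on $\Ss^{d-2}$ is invariant under the antipodal map $e\mapsto -e$, the substitution $e\mapsto -e$ gives
\begin{align*}
\int_{\Ss^{d-2}}\Ku_{j,-e}(E)\,\calH^{d-2}(\ud e)=\int_{\Ss^{d-2}}\Ku_{j,e}(E)\,\calH^{d-2}(\ud e),
\end{align*}
so averaging the two identities above yields $\int_{\Ss^{d-2}}K_{j,e}(E)\,\calH^{d-2}(\ud e)=\int_{\Ss^{d-2}}\Ku_{j,e}(E)\,\calH^{d-2}(\ud e)$. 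Theorem~\ref{thm:formula} applied with $\phi=j$ then closes the argument.

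The only step that actually needs a line of justification is the measurability of $e\mapsto \Ku_{j,e}(E)$ on $\Ss^{d-2}$ together with the legitimacy of the manipulations above. Both follow from the same dominated-convergence argument used at the end of the proof of Theorem~\ref{thm:formula}: the integrand is controlled uniformly in $r\in(0,R)$ by the explicit majorant $\tfrac{8M}{\kappa_{d-1}}\int_{\R^d}(1\wedge|x|^\beta)\,j(x)\,\ud x$, which comes from \eqref{eq:1} and \eqref{eq:f-bound-B_R}, so Fubini applies and one may exchange the limit in $r$ with the spherical integration.

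An equivalent, entirely parallel direct route is also available: restricting the identity \eqref{eq:int} to $\rho>0$ one obtains the half-plane analogue
\begin{align*}
\int_{\Ss^{d-2}}\int_{\piu(e)}|y'|^{d-2}G(y)\,\ud y\,\calH^{d-2}(\ud e)=\int_{\Rd} G(x)\,\ud x,
\end{align*}
(note the disappearance of the factor $2$ that appears in the full-plane version used for Theorem~\ref{thm:formula}), and applies it to $G(y)=\chi_{B_r^c}(y)\widetilde{\chi}_E(y)\,j(y)$ before letting $r\da 0$. I do not anticipate any genuine obstacle: the hardest part, namely producing an $L^1$ majorant that is independent of $r$, has already been carried out in the proofs of Proposition~\ref{curv-well-defined} and Theorem~\ref{thm:formula}, and the present argument is a direct reprise of that machinery.
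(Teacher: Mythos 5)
Your proposal is correct, and it in fact contains the paper's own argument as your second route: the paper proves the corollary by ``proceeding similarly'' to Theorem~\ref{thm:formula}, i.e.\ by applying the half-plane analogue of \eqref{eq:int} (without the factor $2$) to $G(y)=\chi_{B_r^c}(y)\widetilde{\chi}_E(y)j(y)$ and passing to the limit $r\da 0$ with the same $r$-uniform majorant. Your primary route is a mild but genuine variant: instead of redoing the integral identity, you take Theorem~\ref{thm:formula} with $\phi=j$ as a black box and reduce everything to the symmetrization identity $K_{j,e}=\tfrac12\bigl(\Ku_{j,e}+\Ku_{j,-e}\bigr)$ --- already recorded in Section~\ref{sec:Prem} and legitimate because Proposition~\ref{curv-well-defined} guarantees that both one-sided limits exist --- together with the antipodal invariance of $\calH^{d-2}$ on $\Ss^{d-2}$. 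What this buys is that the only genuinely new verification is measurability and integrability of $e\mapsto\Ku_{j,e}(E)$; your appeal to an $r$-uniform bound is the right justification there, with the small caveat that for the one-sided integral the bound comes from the oddness-in-$h$ cancellation used in the proof of Proposition~\ref{prop:K-formula} (valid since $j$ is radial), not from the $y\mapsto-y$ reflection that handles the two-sided case. Either route closes the argument.
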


We next show  that if the set $E$ is a subgraph of a function as in \eqref{E-subgraph} then the nolocal directional curvature can be computed through the defining function, cf.\ \cite[Eq.\ (11)]{Abatan-Valdin-curv}.

\begin{proposition}\label{prop:K-formula}
If  a set $E$ is given by \eqref{E-subgraph} 
then
\begin{align*}
K_{\phi,e} (E)  
&= \frac{1}{2}\int_{\R} \int_{|f(\rho e)|}^{|f(-\rho e)|} \phi(\rho e, h) |\rho|^{d-2} \ud h\, \ud \rho - \frac{1}{2} \int_{\R}\int_{-f(\rho e)}^{f(\rho e)} \phi(\rho e, h) |\rho|^{d-2} \ud h\, \ud \rho .
\end{align*}
In particular, we have
\begin{equation}\label{eq:k_sg}
\Ku_{j,e} (E) = -2 \int_0^{\infty} \int_0^{f(\rho e)} j(\sqrt{\rho^2+h^2}) \rho^{d-2} \, \ud h\,  \ud \rho.
\end{equation}
\end{proposition}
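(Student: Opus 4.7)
The strategy is to parametrise $\pi(e)$ via $y=\rho e + hv$, $(\rho,h)\in\R^2$, so that $|y'|=|\rho|$, and to exploit in concert the subgraph representation $\widetilde{\chi}_E(\rho e + hv) = \chi_{(f(\rho e),\infty)}(h) - \chi_{(-\infty,f(\rho e)]}(h)$ and the coordinate form $\phi(\rho e,h)=\phi(-\rho e,-h)$ of the symmetry \eqref{eq:j-bound}. Absolute convergence of every intermediate integral that appears is guaranteed by the bound $|f(y')|\le M|y'|^{1+\beta}$ near the origin together with \eqref{eq:levy}, as already used in the proof of Proposition \ref{curv-well-defined}.

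Identity \eqref{eq:k_sg} is the quicker of the two: restricting to $\piu(e)$ (i.e.\ $\rho>0$) makes the inner $h$-integral absolutely convergent, and
\[
\Ku_{j,e}(E) = \int_0^\infty \rho^{d-2}\Bigl[\int_{f(\rho e)}^\infty j(\sqrt{\rho^2+h^2})\,\ud h - \int_{-\infty}^{f(\rho e)} j(\sqrt{\rho^2+h^2})\,\ud h\Bigr]\,\ud \rho.
\]
Since $j(\sqrt{\rho^2+h^2})$ is even in $h$, the change of variables $h\mapsto -h$ recasts the second integral as $\int_{-f(\rho e)}^\infty j(\sqrt{\rho^2+h^2})\,\ud h$, and the subtraction collapses to the signed integral $-\int_{-f(\rho e)}^{f(\rho e)} j(\sqrt{\rho^2+h^2})\,\ud h = -2\int_0^{f(\rho e)} j(\sqrt{\rho^2+h^2})\,\ud h$, which is the claim.

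For the main identity I split $\pi(e)=\piu(e)\cup\piu(-e)$; in the $\piu(-e)$ contribution the substitution $(\rho,h)\mapsto(-\rho,-h)$ together with the symmetry of $\phi$ brings the integrand back to the form $\phi(\rho e,h)$. Setting $a=f(\rho e)$ and $b=-f(-\rho e)$, the two contributions combine through the telescoping identity
\[
\Bigl[\int_a^\infty - \int_{-\infty}^a + \int_{-\infty}^b - \int_b^\infty\Bigr]\phi(\rho e,h)\,\ud h = 2\int_a^b \phi(\rho e,h)\,\ud h,
\]
yielding $K_{\phi,e}(E) = \tfrac{1}{2}\int_\R |\rho|^{d-2} \int_{f(\rho e)}^{-f(-\rho e)} \phi(\rho e,h)\,\ud h\,\ud \rho$ with a signed $h$-integral. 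Splitting $\int_{f(\rho e)}^{-f(-\rho e)} = -\int_{-f(\rho e)}^{f(\rho e)} + \int_{-f(\rho e)}^{-f(-\rho e)}$ furnishes the second summand of the stated formula directly, while the remaining piece is identified with $\int_\R |\rho|^{d-2}\int_{|f(\rho e)|}^{|f(-\rho e)|} \phi\,\ud h\,\ud \rho$ after averaging with its image under $\rho\mapsto -\rho$, $h\mapsto -h$ and applying the elementary signed-interval identity $\int_{-a}^{-b}+\int_a^b = \int_{|a|}^{|b|}+\int_{-|a|}^{-|b|}$.

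The main obstacle is the careful bookkeeping of signed integrals in the final step, notably the verification of the signed-interval identity by a brief case analysis on the signs of $f(\rho e)$ and $f(-\rho e)$; all interchanges of limit and integral are legitimate by dominated convergence together with the bounds established in the proof of Proposition \ref{curv-well-defined}.
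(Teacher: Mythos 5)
Your proof is correct and follows essentially the same route as the paper's: parametrising $\pi(e)$ by $(\rho,h)$, using the subgraph structure of $E$ together with the central symmetry $\phi(\rho e,h)=\phi(-\rho e,-h)$ to pair contributions into signed interval integrals, and justifying the principal-value limit via the bounds $|f(\rho e)|\le M|\rho|^{1+\beta}$ and \eqref{eq:levy} from Proposition \ref{curv-well-defined}. The only difference is organizational — you fold the $\rho<0$ half-plane onto $\rho>0$ first and telescope to the single signed integral $\int_{f(\rho e)}^{-f(-\rho e)}$ before re-splitting, whereas the paper splits the $h$-range into $|h|\gtrless|f(\rho e)|$ first — and your signed-interval identities all check out.
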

\begin{proof}
We have
\begin{align*}
2K_{\phi,e} (E) 
&= \lim_{r\da 0} \int_{\R} \int_{\R} \chi_{B_r^c}(\rho e, h) \widetilde{\chi}_E(\rho e,h) \phi(\rho e, h) |\rho|^{d-2} \,  \ud h\ud \rho 
\end{align*}
We observe that $ \widetilde{\chi}_E(\rho e, h) = -1$ if $ h< -|f(\rho e)|$ and $\widetilde{\chi}_E(\rho e, h)=1$ if $h>|f(\rho e)|$.
Therefore,
\begin{align*}
\begin{split}
2K_{\phi,e} (E)  &= \lim_{r\da 0} \Bigg[\int_{\R} \left(\int_{|f(\rho e)|}^{\infty} - \int_{-\infty}^{-|f(\rho e)|} \right)  \chi_{B_r^c}(\rho e, h)\phi(\rho e, h) |\rho|^{d-2} \ud h \ud \rho \\
&\quad\quad\quad + \int_{\R} \int_{-|f(\rho e)|}^{|f(\rho e)|}  \chi_{B_r^c}(\rho e, h) \widetilde{\chi}_E(\rho e, h) \phi(\rho e, h) |\rho|^{d-2} \ud h \ud \rho\Bigg].
\end{split}
\end{align*}
Further,
\begin{align}
\int_{-|f(\rho e)|}^{|f(\rho e)|}  \chi_{B_r^c}(\rho e, h) \widetilde{\chi}_E(\rho e, h) &\phi(\rho e, h) |\rho|^{d-2} \ud h \nonumber\\
\begin{split}
&=
\begin{cases}
\displaystyle{-\int_{-f(\rho e)}^{f(\rho e)}  \chi_{B_r^c}(\rho e, h)\phi(\rho e, h) |\rho|^{d-2} \ud h}, & \textnormal{if}\,\, f(\rho e)\geq 0,\\
\displaystyle{\int_{f(\rho e)}^{-f(\rho e)}  \chi_{B_r^c}(\rho e, h) \phi(\rho e, h) |\rho|^{d-2} \ud h}, & \textnormal{if}\,\, f(\rho e)<0,
\end{cases}
\end{split}\label{eq:111}\\
&= \displaystyle{-\int_{-f(\rho e)}^{f(\rho e)}  \chi_{B_r^c}(\rho e, h) \phi(\rho e, h) |\rho|^{d-2} \ud h}.\label{eq:1112}
\end{align}
Since $\phi$ is symmetric, we have
\begin{align*}
\int_{\R} \int_{-\infty}^{-|f(\rho e)|} \chi_{B_r^c}(\rho e, h) \phi(\rho e, h) |\rho|^{d-2} \ud h \ud \rho &= \int_{\R} \int_{|f(-\rho e)|}^{\infty}\chi_{B_r^c}(\rho e, h) \phi(\rho e, h)   |\rho|^{d-2} \ud h \ud \rho .
\end{align*}
By the proof of Proposition \ref{curv-well-defined}, the limit of the integral of \eqref{eq:1112} exists,
hence
\begin{align*}
2K_{\phi,e} (E)  &=  \int_{\R} \int_{|f(\rho e)|}^{|f(-\rho e)|}
 \phi(\rho e, h) |\rho|^{d-2} \ud h \ud \rho - \int_{\R}\int_{-f(\rho e)}^{f(\rho e)} \phi(\rho e, h) |\rho|^{d-2} \ud h \ud \rho.
\end{align*}
For the case when $g = 1$ 
we apply \eqref{eq:int} to the function $y\mapsto |y'|^{d-2} j(y) \widetilde{\chi}_E(y)$ and we obtain
\begin{align*}
\int_{\pi(e)} |y'|^{d-2} j(y) \widetilde{\chi}_E(y) \, \ud y = \int_0^{\infty} \int_{\R} j(\sqrt{\rho^2+h^2}) \rho^{d-2} \widetilde{\chi}_E(\rho e, h) \, \ud h\,  \ud \rho. 
\end{align*}
We next observe that $ \widetilde{\chi}_E(\rho e, h) = -1$ if $ h< -|f(\rho e)|$ and $\widetilde{\chi}_E(\rho e, h)=1$ if $h>|f(\rho e)|$. We conclude that for any fixed $e\in\Ss^{d-2}$ the function
$
h \mapsto \rho^{d-2} j(\sqrt{\rho^2+h^2}) \widetilde{\chi}_E(\rho e, h)
$
is odd for $h \in \R \setminus [-|f(\rho e)|, |f(\rho e)|]$. Hence
\begin{align*}
\int_{\R \setminus [-|f(\rho e)|, |f(\rho e)|]} \rho^{d-2} j(\sqrt{\rho^2+h^2})  \widetilde{\chi}_E(\rho e, h) \, \ud h = 0.
\end{align*}
By \eqref{eq:111} and symmetry of $j$, 
\begin{align*}
\int_{-|f(\rho e)|}^{|f(\rho e)|} \!\!\! \rho^{d-2} j(\sqrt{\rho^2+h^2})  \widetilde{\chi}_E(\rho e, h) \, \ud h  &= 
-\int_{-f(\rho e)}^{f(\rho e)} j(\rho e, h) |\rho|^{d-2} \ud h \\
&= -2\int_0^{f(\rho e)}\!\!\!\! j(\sqrt{\rho^2+h^2}) \rho^{d-2} \ud h.
\end{align*} 
This implies
\begin{align*}
\int_{\pi(e)} |y'|^{d-2} j(y)  \widetilde{\chi}_E(y) \ud y = -2\int_0^{\infty} \int_0^{f(\rho e)} \rho^{d-2} j(\sqrt{\rho^2+h^2})\, \ud h\, \ud \rho,
\end{align*}
and \eqref{eq:k_sg} follows.
\end{proof}

\section{Mean curvature flows}

In this section we study general properties of the nonlocal curvature related to a given kernel $\phi$ satisfying \eqref{eq:j-bound}. We first show that it fulfils the set of axioms formulated in paper \cite{Chambolle-Archiv} and thus it can be seen as a proper model of nonlocal curvature. We further show that it is the first variation of the corresponding  nonlocal perimeter, and finally, we establish existence and uniqueness of level-set viscosity solutions to the related parabolic Cauchy problem. 

\subsection{Nonlocal curvature axioms}\label{subsec:axioms}
Let $\Reg$ be the class of subsets of $\Rd$ that can be obtained as the closure of an open set with compact $\calC^2$ boundary. 
By $E_n \to E$ in $\Reg$ we mean that there exists a sequence of diffeomorphisms $\{\Phi_n\}$ such that it converges to identity in $\calC^2$ and  $E_n = \Phi_n(E)$.  
In this section we always assume that the kernel $j$ satisfies condition \eqref{eq:levy} with $\beta =1$.

\begin{proposition}\label{axioms}
The following properties of the curvature $H_\phi$ are satisfied.\\
(M) Monotonicity: Let $E,F \in \Reg$ be such that $E\subset F$ and let $x\in\partial E \cap \partial F$. It holds
\[
H_\phi(x,F)\leq H_\phi(x,E);
\]
(T) Translation invariance: Let $E\in\Reg$, $x\in \partial E$ and $z\in\Rd$. It holds 
\[H_\phi(x,E) = H_\phi(x+z, E+z);
\]
(C') Uniform continuity: For any $R>0$ there exists a modulus of continuity $\omega_R$ such that 
for all $E\in\Reg$ satisfying both interior
and exterior ball condition of radius $R$ at $x\in\partial E$,
 and for all diffeomorphisms $\Phi \colon \R^d\to \R^d$ of class {$\calC^2$},
with $\Phi(y)=y$ for $|y-x|\geq 1$, it holds
$$
|H(x,E) - H(\Phi(x),\Phi(E))|\leq \omega_{R}(\|\Phi - \mathrm{Id}\|_{\calC^2}).
$$
In particular, condition (C') implies the following weaker continuity condition:\\
(C) Continuity: If $E_n \to E$ in $\Reg$ and $\partial E_n \ni x_n \to x \in \partial E$ then $H_\phi(x_n, E_n) \to H_\phi(x,E)$.
\end{proposition}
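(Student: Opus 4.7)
\textbf{Plan for Proposition \ref{axioms}.} For (T) and (M) I would proceed directly. For (T), substituting $y \mapsto y+z$ in the defining integral \eqref{eq:mean} of $H_\phi(x+z, E+z)$ gives $\widetilde{\chi}_{E+z}(y+z) = \widetilde{\chi}_E(y)$ and $\phi((x+z)-(y+z)) = \phi(x-y)$, while $B_r(x+z)^c$ pulls back to $B_r(x)^c$, so the two principal value limits coincide. For (M), when $E\subset F$ the pointwise identity
\[
\widetilde{\chi}_E(y) - \widetilde{\chi}_F(y) = 2\chi_{F\setminus E}(y) \geq 0
\]
holds; multiplying by $\phi(x-y)\geq 0$ and integrating over $B_r(x)^c$ preserves the inequality, and Proposition \ref{curv-well-defined} guarantees that both limits exist as $r \da 0$, giving $H_\phi(x,E) \geq H_\phi(x,F)$.

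The main content is (C'). Using (T), translate so that $x = 0$ and introduce $E' := \Phi(E) - \Phi(0)$, so that $0 \in \partial E \cap \partial E'$ and $H_\phi(\Phi(0), \Phi(E)) = H_\phi(0, E')$. The interior and exterior ball condition of radius $R$ at $0$ for $E$, together with smallness of $\|\Phi - \mathrm{Id}\|_{\calC^2}$, allows me to represent both $\partial E$ and $\partial E'$ in normal coordinates as graphs $y_d = f(y')$ and $y_d = g(y')$ over $B_{R/2}\cap \R^{d-1}$, satisfying $|f(y')|, |g(y')| \leq C_R|y'|^2$ and $\|g - f\|_{\calC^{1,1}} \leq C_R\|\Phi - \mathrm{Id}\|_{\calC^2}$ (via the implicit function theorem applied to the defining equation of $\Phi(\partial E)$ in normal coordinates). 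Combining
\[
\widetilde{\chi}_E - \widetilde{\chi}_{E'} = 2(\chi_{E'} - \chi_E), \qquad \bigl|\widetilde{\chi}_E - \widetilde{\chi}_{E'}\bigr| \leq 2 \chi_{E\triangle E'},
\]
with absolute integrability of $\chi_{E \triangle E'}\, j$ (justified below), yields
\[
|H_\phi(0, E) - H_\phi(0, E')| \leq \frac{2}{\kappa_{d-2}} \int_{E \triangle E'} j(y)\, \ud y.
\]

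Next I would split this integral at a small $\delta > 0$. For the near part in $B_\delta$ the graph bounds locate $(E\triangle E') \cap B_\delta$ inside the quadratic strip $\{|y_d| \leq C_R|y'|^2\}$, and the estimate from the proof of Proposition \ref{curv-well-defined} yields
\[
\int_{(E \triangle E') \cap B_\delta} j(y)\, \ud y \leq C_R \int_{B_\delta} |x|\, j(x)\, \ud x,
\]
which tends to $0$ as $\delta \da 0$, uniformly in $E$ and $\Phi$, thanks to \eqref{eq:levy} with $\beta = 1$. For the far part in $B_\delta^c$, the condition $\Phi = \mathrm{Id}$ outside $B_1$ together with $|\Phi(0)| \leq \|\Phi - \mathrm{Id}\|_{\calC^0}$ confines $E \triangle E'$ to a bounded set within a $C\|\Phi - \mathrm{Id}\|_{\calC^0}$-tubular neighborhood of $\partial E$; since $j \leq j(\delta)$ on $B_\delta^c$, the contribution is bounded by $C_R\, j(\delta)\,\|\Phi - \mathrm{Id}\|_{\calC^0}$, which vanishes as $\|\Phi - \mathrm{Id}\|_{\calC^2} \to 0$ for fixed $\delta$. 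Choosing $\delta$ as a suitable function of $\|\Phi - \mathrm{Id}\|_{\calC^2}$ produces the modulus $\omega_R$.

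The hard part is the far-part estimate, since the surface area of $\partial E$ inside a fixed ball is not a priori controlled by $R$ alone; I would handle it by covering $\partial E$ inside the relevant bounded region by finitely many graph patches via translated normal coordinates at well-chosen points whose number depends only on $R$ and $d$, using the global $\calC^2$-regularity of $\partial E$ in each patch. Finally, (C) follows from (C') by reducing to $x = 0$ via (T), transferring the diffeomorphism $\Phi_n$ into a map $\Psi_n$ with $\Psi_n(E - x) = E_n - x_n$ and $\|\Psi_n - \mathrm{Id}\|_{\calC^2} \to 0$, applying (C') to obtain convergence up to a small base-point shift $\Psi_n(0) \to 0$, and absorbing this residual shift by an additional compactly supported $\calC^2$ bump perturbation (itself close to the identity).
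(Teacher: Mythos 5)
Your treatments of (M) and (T) are correct (the paper omits them as straightforward), and your far-field estimate --- symmetric difference confined to a tubular neighbourhood of $\partial E$, $j$ bounded below scale $\delta$, covering argument for the surface area --- is in the same spirit as the paper's Lemmas \ref{lm:Lip-estimate} and \ref{lm:Lip-estimate2}. But the core of your proof of (C') has a genuine gap: the master inequality
\[
|H_\phi(0,E)-H_\phi(0,E')|\;\leq\;\frac{2}{\kappa_{d-2}}\int_{E\triangle E'} j(y)\,\ud y
\]
is not usable near the base point. Your localization of $(E\triangle E')\cap B_\delta$ inside the quadratic strip $\{|y_d|\le C_R|y'|^2\}$ presupposes that \emph{both} graphs satisfy $\nabla f(0)=\nabla g(0)=0$, i.e.\ that $\Phi$ preserves the tangent hyperplane at $x$. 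It does not: if $\Phi$ acts near $x$ as a small rotation, then $\nabla g(0)\neq 0$ and $E\triangle E'$ contains, arbitrarily close to the origin, a double wedge of angular aperture comparable to $\|\Phi-\mathrm{Id}\|_{\calC^1}$. For admissible kernels such as $j(x)=|x|^{-d-\alpha}$ with $\alpha\in(0,1)$ (which satisfy \eqref{eq:levy} with $\beta=1$), the integral of $j$ over such a wedge is $\asymp\int_0^\delta r^{-1-\alpha}\,\ud r=+\infty$, so the right-hand side of your bound is infinite and no modulus of continuity can be extracted. The finiteness and smallness of the true difference relies on a cancellation \emph{inside} that wedge coming from the symmetry $\phi(y)=\phi(-y)$, which the pointwise bound $|\widetilde{\chi}_E-\widetilde{\chi}_{E'}|\le 2\chi_{E\triangle E'}$ discards.

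This is exactly why the paper's proof is organized differently: it first excises a small ball $B_{\eta_\eps}$ around each base point and shows, via the interior/exterior tangent balls and the symmetry of $\phi$ (Lemmas \ref{lem43} and \ref{lem44}), that the principal value of each curvature over its own small ball is at most $\eps/4$ \emph{separately}; only outside $B_{\eta_\eps}$, where $j\le j(\eta_\eps-2\rho)$ is bounded, does it compare the two sets through Lebesgue measures of symmetric differences (plus a third term accounting for the mismatch between $\Phi(B_{\eta_\eps}^c(x))$ and $B_{\eta_\eps}^c(\widehat x)$, again absorbed by Lemma \ref{lem43}). To repair your argument you would need to replace the near-origin step by this two-sided tangent-ball cancellation; the rest of your outline can then be kept.
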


Before we present a proof of Proposition \ref{axioms} we need to establish the following four auxiliary lemmas. 

\begin{lemma}\label{lem43}
Let $\delta>0$. Then, there exists an increasing continuous function $\omega_{\delta}\colon [0,\infty) \to \R$ with $\omega_{\delta}(0)=0$ such that for every $\eta\geq 0$ it holds
\begin{equation}\label{int-modulus-cont}
\int_{B_{\eta}\setminus (B_{\delta}(\delta e_d)\cup B_{\delta}(-\delta e_d))} j(y) \ud y \leq \omega_{\delta}(\eta).
\end{equation}
\end{lemma}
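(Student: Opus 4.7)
My plan is to simply take $\omega_\delta$ to be the left-hand side of \eqref{int-modulus-cont} viewed as a function of $\eta$, namely
$$\omega_\delta(\eta) := \int_{B_{\eta}\setminus (B_{\delta}(\delta e_d)\cup B_{\delta}(-\delta e_d))} j(y) \, \ud y,$$
and to verify that this $\omega_\delta$ is finite, continuous, non-decreasing and vanishes at $0$. Monotonicity and $\omega_\delta(0)=0$ are tautological. The first step is the geometric observation that, by expanding $|y \mp \delta e_d|^2 \leq \delta^2$,
$$B_\delta(\delta e_d) \cup B_\delta(-\delta e_d) \,=\, \{y\in\Rd : |y_d| \geq |y|^2/(2\delta)\},$$
so the integration region becomes the ``equatorial'' slab $\{y\in B_\eta : |y_d| < |y|^2/(2\delta)\}$, consisting of the points of $B_\eta$ that stay near the hyperplane orthogonal to $e_d$.

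The second step is a spherical-coordinate estimate. Writing $y = r\theta$ with $r = |y|$ and $\theta \in \Ss^{d-1}$, the condition $|y_d| < |y|^2/(2\delta)$ becomes $|\theta_d| < r/(2\delta)$. I would invoke the standard equatorial-belt bound
$$\calH^{d-1}\bigl(\{\theta \in \Ss^{d-1} : |\theta_d| < s\}\bigr) \,\leq\, C_d\, (s\wedge 1), \qquad s\geq 0,$$
with a dimensional constant $C_d$; for $d=2$ this reduces to $4\arcsin(s)\leq 2\pi s$ on $[0,1]$. Since $j$ is radial, this gives
$$\omega_\delta(\eta) \,\leq\, C_d \int_0^\eta r^{d-1} j(r) \left(\frac{r}{2\delta}\wedge 1\right) \ud r.$$
Splitting at $r = 2\delta$, the part over $(0, 2\delta\wedge\eta)$ is controlled by $\frac{C_d}{2\delta}\int_0^{2\delta\wedge\eta} r\cdot r^{d-1} j(r)\, \ud r$, finite by \eqref{eq:levy} with $\beta=1$; the part over $(2\delta,\eta)$ (nonempty only when $\eta > 2\delta$) is at most $C_d\, j(2\delta)\int_{2\delta}^\eta r^{d-1}\, \ud r$ by monotonicity of $j$. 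Hence $\omega_\delta(\eta) < \infty$ for every $\eta \geq 0$.

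Continuity of $\omega_\delta$ then follows from dominated convergence: for $\eta_n \to \eta_0$ the indicator of the slab corresponding to $\eta_n$ converges pointwise a.e.\ to the indicator for $\eta_0$ (the sphere $\{|y|=\eta_0\}$ is Lebesgue-null), and the indicators are all dominated by the indicator of the slab for $\eta_0 + 1$, whose $j$-integral is finite by the finiteness step above. I do not expect any substantive obstacle; the only delicate point is the spherical-belt estimate in dimension $d=2$, where $(1-s^2)^{(d-3)/2}$ is not bounded, and it is handled exactly by the $\arcsin$ bound noted above.
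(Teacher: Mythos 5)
Your proof is correct, and it rests on the same underlying geometric fact as the paper's: outside the two tangent balls, the points of $B_\eta$ lie in a thin neighbourhood of the hyperplane $\{y_d=0\}$ of thickness of order $|y|^2/\delta$, whose $j$-mass is controlled by \eqref{eq:levy} with $\beta=1$. The execution, however, is genuinely different. The paper splits off $B_\eta\setminus B_{\delta/2}$, describes the remaining region near the origin in Cartesian coordinates as $\{|y_d|\le \delta-\sqrt{\delta^2-|y'|^2}\}$, enlarges it to $\{|y_d|\le \tfrac{1}{\sqrt3}(1+\delta^{-1})(|y'|\wedge|y'|^2)\}$, and integrates out $y_d$ using the radial monotonicity $j(y)\le j(y')$. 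You instead use the exact identity $B_\delta(\delta e_d)\cup B_\delta(-\delta e_d)=\{|y_d|\ge |y|^2/(2\delta)\}$ and pass to polar coordinates, where the equatorial-belt bound $\calH^{d-1}(\{|\theta_d|<s\})\le C_d(s\wedge1)$ does the work; your flag about $d=2$ is warranted and is correctly resolved by the $\arcsin$ estimate. Both routes produce a bound of the form $c(1+\delta^{-1})\int_{B_{\eta\wedge c'\delta}}(1\wedge|y|)j(y)\,\ud y$ plus a term supported in $\{\eta> c'\delta\}$ that is finite by \eqref{eq:levy}; yours avoids the paper's ad hoc inclusion and uses radial monotonicity of $j$ only in the far range (where it could even be dispensed with, since $\int_{B_\eta\setminus B_{2\delta}}j<\infty$ directly). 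You also verify explicitly that $\omega_\delta$ is continuous, monotone and vanishes at $0$, which the paper leaves implicit; if ``increasing'' is read strictly, replace $\omega_\delta(\eta)$ by $\omega_\delta(\eta)+\eta$.
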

\begin{proof}
We split the integral in \eqref{int-modulus-cont} as follows
\begin{align*}
\int_{B_{\eta}\setminus (B_{\delta}(\delta e_d)\cup B_{\delta}(-\delta e_d))} j(y) \ud y
&\leq \left(\int_{B_{\eta}\setminus B_{\delta/2}} + \int_{A} \right) j(y) \ud y
= {\rm I}_1 + {\rm I}_2,
\end{align*}
where 
$$
A = B_{\eta} \cap \{|y'| < \delta/2 \} \cap \{|y_d| \leq \delta -\sqrt{\delta^2-|y'|^2}\}.
$$
In view of \eqref{eq:levy} (with $\beta =1$) the integral ${\rm I_1}$ is finite.
To handle the integral ${\rm I_2}$, we observe that
$$
A \subset  B_{\eta} \cap \{|y'| < \delta/2\}\cap \left\{|y_d| \leq \frac{1}{\sqrt{3}}(1+\delta^{-1}) (|y'| \wedge |y'|^2)\right\}.
$$
This implies
\begin{align*}
{\rm I_{2}} 
&\leq \int_{|y'|\leq \eta \wedge \frac{\delta}{2}}  \int_{\{|y_d| \leq \frac{1}{\sqrt{3}}\left(1+\frac{1}{\delta}\right) (|y'| \wedge |y'|^2)\}}  j(y') \ud y_d\ud y'\\
&\leq \frac{2}{\sqrt{3}}\left(1+\frac{1}{\delta}\right) \int_{|y'|\leq \eta \wedge \frac{\delta}{2}} (|y'| \wedge |y'|^2)  j(y') \ud y'
\leq c \left(1 +\frac{1}{\delta}\right) \int_{B_{ \eta \wedge \frac{\delta}{2}}} (1\wedge |y|)j(y) \ud y
\end{align*}
and the proof is finished.
\end{proof}

\begin{lemma}\label{lem44}
Let $E \in \Reg$ and let $\delta>0$ be such that $B_{\delta}(-\delta e_d) \subset E$ and $B_{\delta}(\delta e_d) \subset E^c$. Then, for every $\eta>0$,
$$
\lim_{r\da 0} \int_{B_{\eta}\setminus B_r} \widetilde{\chi}_E(y) \phi(y) \ud y = \int_{B_{\eta}\setminus (B_{\delta}(\delta e_d)\cup B_{\delta}(-\delta e_d))} \widetilde{\chi}_E(y) \phi(y) \ud y.
$$ 
\end{lemma}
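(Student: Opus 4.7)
The plan is to exploit the symmetry $\phi(y)=\phi(-y)$ together with the fact that the two tangent balls $B_{\delta}(-\delta e_d) \subset E$ and $B_{\delta}(\delta e_d) \subset E^c$ are exact reflections of one another through the origin, so that their contributions cancel exactly.

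First I would split the domain $B_{\eta} \setminus B_r$ into three pairwise disjoint pieces: $(B_{\eta} \cap B_{\delta}(-\delta e_d)) \setminus B_r$, $(B_{\eta} \cap B_{\delta}(\delta e_d)) \setminus B_r$, and the remainder $B_{\eta} \setminus (B_{\delta}(-\delta e_d) \cup B_{\delta}(\delta e_d) \cup B_r)$. Disjointness holds because the two tangent balls meet only at the origin, and that single point lies inside $B_r$. On the first piece $\widetilde{\chi}_E \equiv -1$ since $B_{\delta}(-\delta e_d)\subset E$, and on the second $\widetilde{\chi}_E \equiv +1$ since $B_{\delta}(\delta e_d)\subset E^c$. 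Next I would apply the change of variables $y \mapsto -y$, which is an isometry leaving $B_{\eta}$ and $B_r$ invariant and mapping $B_{\delta}(-\delta e_d)$ onto $B_{\delta}(\delta e_d)$. Combined with the symmetry of $\phi$, this shows that the integrals over the first two pieces are equal in magnitude and opposite in sign, so their sum vanishes for every $r>0$. For the third piece, $|\widetilde{\chi}_E(y) \phi(y)| \leq j(y)$ and Lemma \ref{lem43} gives $\int_{B_{\eta} \setminus (B_{\delta}(-\delta e_d) \cup B_{\delta}(\delta e_d))} j(y) \ud y \leq \omega_{\delta}(\eta) < \infty$, so dominated convergence delivers the claimed limit.

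I do not foresee a substantial obstacle in this argument; the only point that requires a little care is recognising that excising $B_r$ forces the decomposition into the two tangent balls and the remainder to be \emph{strictly} disjoint, which is precisely what produces the exact cancellation for each $r>0$ rather than only in the limit. Everything else is monotone/dominated convergence controlled by the integrability bound furnished by the preceding lemma.
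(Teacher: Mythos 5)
Your proof is correct and follows essentially the same route as the paper: decompose the annulus into the two reflected tangent balls plus the remainder, cancel the ball contributions exactly via the symmetry $\phi(y)=\phi(-y)$, and pass to the limit on the remainder using the integrability bound of Lemma \ref{lem43} with dominated convergence. The only difference is cosmetic — you make the sign of $\widetilde{\chi}_E$ on each tangent ball and the reflection $y\mapsto -y$ explicit, which the paper leaves implicit.
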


\begin{proof}
We have
\begin{align*}
\int_{B_\eta \setminus B_r} \widetilde{\chi}_E(y) \phi(y) \ud y &=
\int_{(B_{\eta} \setminus B_r) \setminus (B_{\delta}(\delta e_d)\cup B_{\delta}(-\delta e_d))} \widetilde{\chi}_E(y) \phi(y) \ud y \\
&\quad + \int_{(B_{\eta}\setminus B_r) \cap B_{\delta}(\delta e_d)} \phi(y) \ud y - \int_{(B_{\eta}\setminus B_r) \cap B_{\delta}(-\delta e_d)} \phi(y) \ud y.
\end{align*}
The two last integrals cancel out due to the symmetry of $\phi$. The claim follows by Lemma \ref{lem43} combined with Lebesgue's dominated convergence theorem.
\end{proof}

\begin{remark}
The proof above reveals that the statement of Lemma \ref{lem44} remains valid if we replace $B_\eta$ with $\R^d$.
\end{remark}

\begin{lemma}\label{lm:Lip-estimate}
Let $E\subset \R^d$ be a Lipschitz domain with localisation radius $r_0$ and Lipschitz constant $\lambda$
 and let $(\partial E)_r = \{ x \in \Rd: \operatorname{dist}(x,\partial E) \leq r\}$ denote the tubular neighbourhood of $\partial E$ of radius $r>0$. Then, there exists $C=C(\lambda, r_0, d)$ such that for every $x\in\partial E$ and for all 
 $$
 0<r<R<\frac{r_0}{2} \left({d-1} + \left({\lambda} \sqrt{d-1} +\sqrt{1+\lambda^2} \right)^2\right)^{-1/2}
 $$
 it holds
$$
|(\partial E)_r \cap B(x,R)| \leq C r R^{d-1}.
$$
\end{lemma}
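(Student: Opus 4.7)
I would reduce the claim to a local computation on a Lipschitz graph. Fix $x\in\partial E$. The hypothesis that $E$ is a Lipschitz domain with localisation radius $r_0$ and Lipschitz constant $\lambda$ yields, after an orthogonal change of coordinates sending $x$ to the origin and aligning the outer normal with $e_d$, a Lipschitz function $f\colon\R^{d-1}\to\R$ with $f(0)=0$ and $\operatorname{Lip}(f)\le\lambda$, together with a neighbourhood of the origin (of size controlled by $r_0$) in which $\partial E$ coincides with the graph of $f$ and $E$ with its subgraph. The somewhat intricate upper bound on $R$ is engineered precisely so that every $y\in B(x,R)$ at distance at most $r$ from $\partial E$ realises that distance inside this chart.

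\textbf{The strip bound.} The analytic core of the argument is a one-line estimate. If $y=(y',y_d)$ lies in the chart and $(z',f(z'))$ is a nearest point of $\partial E$ to $y$, so that $|y'-z'|^2+(y_d-f(z'))^2\le r^2$, then the Lipschitz property together with the Cauchy--Schwarz inequality applied to the weighted sum $(1,\lambda)\cdot(|y_d-f(z')|,|y'-z'|)$ give
\[
|y_d-f(y')|\;\le\;|y_d-f(z')|+\lambda|y'-z'|\;\le\;\sqrt{1+\lambda^2}\,\sqrt{(y_d-f(z'))^2+|y'-z'|^2}\;\le\;r\sqrt{1+\lambda^2}.
\]
Hence $(\partial E)_r\cap B(x,R)$ is contained in the strip $\{(y',y_d):|y'|\le R,\,|y_d-f(y')|\le r\sqrt{1+\lambda^2}\}$, and Fubini's theorem immediately gives
\[
|(\partial E)_r\cap B(x,R)|\;\le\;\int_{|y'|\le R}\!2r\sqrt{1+\lambda^2}\,\ud y'\;=\;2\sqrt{1+\lambda^2}\,\omega_{d-1}\,r\,R^{d-1},
\]
where $\omega_{d-1}$ denotes the Lebesgue volume of the unit ball in $\R^{d-1}$. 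This is the asserted inequality with a constant depending only on $\lambda$ and $d$.

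\textbf{Main obstacle.} The only non-trivial point, and the source of the particular form of the hypothesis on $R$, is the localisation step: I must verify that the nearest point of $\partial E$ to an arbitrary $y\in B(x,R)\cap(\partial E)_r$ genuinely lies on the graph of $f$ inside the chart, rather than on a distant component of $\partial E$ reaching into $B(x,R+r)$. To secure this I would combine the strip bound with the Lipschitz estimate $|f(y')|\le\lambda|y'|$ to confine the relevant piece of $\partial E$ to a Euclidean ball around $x$ of radius at most $R\sqrt{(d-1)+(\lambda\sqrt{d-1}+\sqrt{1+\lambda^2})^2}$ (using $r\le R$ to absorb the $r\sqrt{1+\lambda^2}$ term and a projection onto the coordinate cube $[-R,R]^{d-1}$ when bounding $|y_d|$). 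The hypothesis on $R$ in the statement is exactly the condition that this radius does not exceed $r_0/2$, placing the whole picture comfortably inside the chart where the graph representation is valid. Once this geometric check is in place, the argument reduces to the Fubini estimate above.
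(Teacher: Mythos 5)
Your proposal is correct and follows essentially the same route as the paper: both reduce to a single Lipschitz chart, compare the Euclidean distance to the boundary with the vertical distance to the graph (picking up the factor $\sqrt{1+\lambda^2}$), and conclude by Fubini over a thin strip, with the hypothesis on $R$ serving exactly to keep the picture inside the chart. The only differences are cosmetic: the paper integrates over a cube and treats the $E$ and $E^c$ portions of the tubular neighbourhood separately, while you integrate over a ball and handle the distance to $\partial E$ directly.
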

\begin{proof}
The proof relies on some techniques from \cite[Proof of Lemma 3.1]{Dyda}. Since $E$ is a Lipschitz domain, for each $x\in \partial E$ there exist an isometry $T_x$ of $\Rd$ and a Lipschitz function $\vp_x \colon \R^{d-1} \to \R$ with Lipschitz constant not greater than $\lambda$ such that
$$
T_x(E) \cap B(T_x(x), r_0) = \{y: y_d >\vp_x(y')\} \cap B(T_x(x), r_0).
$$
Let $x \in \partial E$. Without loss of generality we can assume that $T_x = {\rm Id}$ (otherwise, we can apply the considerations below to the Lipschitz domain $T_x(E)$ and the point $T_x(x)$, and then come back to $E$ and $x$ using $T_x^{-1}$). For $x\in\Rd$ we consider the ''vertical'' distance from $y$ to the graph of $\vp_x$, that is
$$
V_x(y) = |y_d - \vp_x(y')|. 
$$
It is easy to see that 
$$
\frac{V_x(y)}{\sqrt{1+\lambda^2}} \leq \delta_E(y) \leq V_x(y), \quad \textnormal{for} \,\, y \in E \cap B(x,r_0/2),
$$
where $\delta_A(y) = \operatorname{dist} (y, A^c) = \operatorname{inf} \{|z-y|: z\in A^c\}$ for $y \in A\subset \Rd$. Similarly,
$$
\frac{V_x(y)}{\sqrt{1+\lambda^2}} \leq \delta_{E^c}(y) \leq V_x(y), \quad \textnormal{for} \,\, y \in E^c \cap B(x,r_0/2).
$$
For a set $D\subset \R^{d-1}$ and $\rho>0$ we define
$$
Q_x(D, \rho) = \{y\in E: y' \in D, 0<V_x(y) \leq \rho\}.
$$
We consider the case when $D= K_{2R} = \{y\in\R^{d-1}: |y_i -x_i|\leq R \,\, \textnormal{for} \,\, i=1,2,\ldots, d-1\}$ and $\rho = r \sqrt{1+\lambda^2}$, $r<R$. 
We fix
$$
R < \frac{r_0}{2} \left({d-1} + \left({\lambda} \sqrt{d-1} +\sqrt{1+\lambda^2} \right)^2\right)^{-1/2}.
$$
It may be checked that for such $R$ we have $Q_x(K_{2R}, r\sqrt{1+\lambda^2}) \subset E\cap B(x,r_0/2)$. 
Hence
\begin{align*}
((\partial E)_r \cap B(x,R) \cap E) &\subset
\{ y \in E: y' \in K_{2R}, \delta_E(y) < r\} \\
&\subset \{y\in E: y' \in K_{2R}, V_x(y) < r \sqrt{1+\lambda^2}\} = Q_x(K_{2R}, r \sqrt{1+\lambda^2}).
\end{align*}
This implies
\begin{align*}
|(\partial E)_r \cap B(x,R) \cap E| \leq |Q_x(K_{2R}, r \sqrt{1+\lambda^2})| &= \int_{K_{2R}} \int_{\vp_x(y')}^{\vp_x(y') + r\sqrt{1+\lambda^2}}\, \ud y_d \, \ud y'\\ &= 2^{d-1} \sqrt{1+\lambda^2} r R^{d-1}.
\end{align*}
Similar calculations reveal that
$$
|(\partial E)_r \cap B(x,R) \cap E^c| \leq 2^{d-1} \sqrt{1+\lambda^2} r R^{d-1}.
$$
We conclude that the desired estimate is valid with $C= 2^d \sqrt{1+\lambda^2}$.
\end{proof}

\begin{lemma}\label{lm:Lip-estimate2}
Let $E\subset \R^d$ be a Lipschitz domain with localisation radius $r_0$ and Lipschitz constant $\lambda$
 and let $(\partial E)_r = \{ x \in \Rd: \operatorname{dist}(x,\partial E) \leq r\}$ denote the tubular neighbourhood of $\partial E$ of radius $r>0$. For any $K>1$ there exist $c = c(d,\lambda)>0$ and $C=C(\lambda, r_0, d, K)>0$ such that for $r< c r_0$ and any $z\in\Rd$,
$$
|(\partial E)_r \cap B(z,K)| \leq C r.
$$
\end{lemma}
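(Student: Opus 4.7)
The strategy is to reduce to Lemma \ref{lm:Lip-estimate} by a volume-based covering argument. The key elementary observation is that if $y\in(\partial E)_r\cap B(z,K)$ then there exists $p\in\partial E$ with $|y-p|\le r$, whence $p\in \partial E\cap B(z,K+r)$. Thus it suffices to cover $\partial E\cap B(z,K+r)$ by balls centred on $\partial E$ of some fixed radius $R_1$ and then sum the local estimates from Lemma \ref{lm:Lip-estimate}.

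First, I would fix a radius $R_1=R_1(d,\lambda)\cdot r_0$ small enough that $\tfrac32 R_1$ lies strictly below the threshold appearing in Lemma \ref{lm:Lip-estimate}, and then set $c=R_1/(2r_0)$ so that the hypothesis $r<cr_0$ yields $r<R_1/2$. Next, let $\{x_i\}_{i=1}^N$ be a maximal $R_1$-separated subset of $\partial E\cap B(z,K+r)$. By maximality,
\[
\partial E\cap B(z,K+r)\;\subset\;\bigcup_{i=1}^{N}B(x_i,R_1),
\]
so by the observation above,
\[
(\partial E)_r\cap B(z,K)\;\subset\;\bigcup_{i=1}^{N}B(x_i,R_1+r)\;\subset\;\bigcup_{i=1}^{N}B(x_i,\tfrac{3}{2}R_1).
\]
Applying Lemma \ref{lm:Lip-estimate} (which is legitimate since $r<R_1/2<\tfrac32 R_1<R_0$) at each $x_i\in\partial E$ gives
\[
\bigl|(\partial E)_r\cap B(x_i,\tfrac{3}{2}R_1)\bigr|\;\le\; C\, r\,(\tfrac{3}{2}R_1)^{d-1}.
\]

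It remains to bound $N$. Because $\{x_i\}$ is $R_1$-separated, the balls $\{B(x_i,R_1/2)\}$ are pairwise disjoint, and they are all contained in $B(z,K+r+R_1/2)\subset B(z,K+r_0)$ (for a suitable choice of $c$). A volume comparison yields
\[
N\,\omega_d (R_1/2)^d\;\le\;\omega_d (K+r_0)^d,\qquad\text{i.e.}\qquad N\le \bigl(2(K+r_0)/R_1\bigr)^d.
\]
Since $R_1$ depends only on $d,\lambda,r_0$, this gives $N\le C'(d,\lambda,r_0,K)$. Summing the local bounds then produces
\[
|(\partial E)_r\cap B(z,K)|\;\le\; N\cdot C\,r\,(\tfrac{3}{2}R_1)^{d-1}\;\le\;C(d,\lambda,r_0,K)\,r,
\]
as desired.

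There is no genuine obstacle here; the argument is a standard covering reduction, and the only point requiring care is the accounting of constants so that $R_1+r$ stays below the threshold from Lemma \ref{lm:Lip-estimate} uniformly in $r<cr_0$. The dependence of $C$ on $K$ enters only through the crude polynomial bound on $N$ and is harmless since the statement permits it.
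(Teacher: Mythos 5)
Your proof is correct and follows essentially the same route as the paper: both reduce to Lemma \ref{lm:Lip-estimate} by covering $(\partial E)_r\cap B(z,K)$ with boundedly many balls of radius comparable to $r_0$ centred on $\partial E$ and summing the local estimates. The only difference is bookkeeping — you count the balls via a maximal separated set and volume packing, while the paper selects centres from a grid of cubes — and both give a constant with the stated dependence.
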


\begin{proof}
We fix a constant 
$$
c= \frac{1}{2}\left({d-1} + \left({\lambda} \sqrt{d-1} +\sqrt{1+\lambda^2} \right)^2\right)^{-1/2}.
$$
The family $\{B(x,c {r_0})\}_{x\in\partial E}$ 
is an open cover of the set $(\partial E)_r \cap B(z,K)$. We shall construct its finite open subcover as follows. 
We  consider the set $\bigtimes_{i=1}^{d} [z_i-K,z_i+K]$ which contains $B(z,K)$. We cover this set by cubes such that the lengths of their diagonals are equal to $c r_0 -r >0$. The number of such cubes is at most 
$
4\left(\left\lfloor \frac{K}{c r_0 - r}\right\rfloor+ 1\right)^2.
$ 
If any of these cubes contains points belonging to $\partial E$, then we pick one such point and add a ball of radius $cr_0$ around it to the considered subcover.
We note that such ball contains the whole cube and the tubular neighbourhood of its boundary of radius $r$. 
Hence, the number of balls in the finite subcover is at most $4\left(\left\lfloor \frac{K}{cr_0-r}\right\rfloor+ 1\right)^2$. 
By Lemma \ref{lm:Lip-estimate}, there exists $C=C(\lambda, r_0, d)>0$ such that for every $x\in\partial E$ it holds
$$
|(\partial E)_r \cap B(x,c {r_0})| \leq C r r_0^{d-1}.
$$
It follows that
$$
|(\partial E)_r \cap B(z, K)| \leq 4C r r_0^{d-1} \left(\left\lfloor \frac{K}{cr_0-r}\right\rfloor+ 1\right)^2
$$
and this finishes the proof.
\end{proof}

\begin{proof}[Proof of Proposition \ref{axioms}]
Properties (M) and (T) are straightforward and thus their proofs are omitted. 
We focus on property (C'). We use ideas from the proof of \cite[Proposition 4.5]{Cesaroni-Stability} but it requires numerous changes and adjustments to the present setting.
We fix $\rr>0$ and let $E\in\Reg$ and $x\in\partial E$ to be such that $E$ enjoys both interior
and exterior ball condition of radius $\rr$ at $x$. We denote by $B_\rr^{\inn}$ and $B_\rr^{\ou}$ the interior and exterior tangent balls  to $\partial E$ at $x$, respectively.
Let $\Phi \colon \R^d\to \R^d$ be a diffeomorphism of class {$\calC^2$}
with $\Phi(y)=y$ for $|y-x|\ge 1$.
We set $\widehat E:=\Phi(E)$ and notice that for $\|\Phi - \mathrm{Id}\|_{\calC^2}$ small enough $\widehat E$ satisfies interior and exterior ball condition at $\widehat x:=\Phi (x)$ with radius $\frac \rr 2$. We denote the corresponding tangent balls by $\widehat B_{\frac{\rr}{2}}^{\inn}$ and $\widehat  B_{\frac \rr 2}^{\ou}$.
In light of Lemma \ref{lem43} and Lemma \ref{lem44} with $\delta=\frac{\rr}{2}$, for every $\eta>0$ it holds 
\begin{equation}\label{menouno}
\left|\lim_{r\to 0^+} 
\int_{B_{\eta}(\widehat x) \setminus B_r(\widehat x)}
\widetilde{\chi}_{\widehat E}(y) \phi(\widehat x - y) \ud y\right|
+\left|\lim_{r\to 0^+}
\int_{B_{\eta}(x) \setminus B_r(x)} 
\widetilde{\chi}_{E}(y) \phi(x-y)\ud y\right| 
\leq4 \omega_{\frac{\rr}{2}}(\eta).
\end{equation}
Let $\eps>0$ and let $\eta_\eps$ be such that $4\omega_{\frac \rr 2}(\eta_\eps)\leq \frac\eps 4$ and $\omega_{\frac \rr 2}(2\eta_\eps)\leq \frac\eps 4$.
By \eqref{menouno} we have
\begin{align}
|H_{\phi}(x,E) &- H_{\phi}(\Phi(x),\Phi(E))|\nonumber
\\
&\leq
\left|\int_{B_{\eta_\eps}^c(x)} \widetilde{\chi}_E(y)\phi(x-y) \ud y 
- \int_{B_{\eta_\eps}^c(\widehat x)} \widetilde{\chi}_{\widehat E}(y)\phi(\widehat x-y) \ud y \right|
+\frac{\eps}{4} \nonumber \\
&\leq \left|\int_{B_{\eta_\eps}^c(x)}
\widetilde{\chi}_E(y) \phi(x-y) \ud y
- \int_{\Phi(B_{\eta_\eps}^c(x))}
\widetilde{\chi}_{\widehat E} (y) \phi(\widehat x - y) \ud y\right| \label{int1}\\
&\quad 
+\left|\int_{\Phi(B^c_{\eta_\eps}(x))}
\widetilde{\chi}_{\widehat E}(y) \phi(\widehat x -y) \ud y
- \int_{B^c_{\eta_\eps}(\widehat x)}
\widetilde{\chi}_{\widehat E}(y) \phi(\widehat x - y) \ud y\right| +\frac \eps 4, \label{int2}
\end{align}
where in the last step we simply used the triangle inequality.

Since $\Phi=\mathrm{Id}$ outside of $B_1(x)$, by a change of variables we obtain that the expression in \eqref{int1} can be estimated as follows
\begin{align*}
\left|\int_{B^c_{\eta_\eps}(x)}
\widetilde{\chi}_E(y) \phi(x-y)
\ud y - \int_{\Phi(B^c_{\eta_\eps}(x))}
\widetilde{\chi}_{\widehat E} (y) \phi(\widehat x - y) 
\ud y\right|
\leq {\rm I} + {\rm J},
\end{align*}
where 
\begin{align*}
{\rm I} = \left| \int_{B^c_1(x)} \widetilde{\chi}_{E} (y) \phi(x - y) \ud y
-  \int_{B^c_1(x)} \widetilde{\chi}_{E} (y) \phi(\widehat x - y) \ud y \right|
\end{align*}
and
\begin{align*}
{\rm J}= \left|\int_{B_1(x)\setminus B_{\eta_\eps}(x)}
\widetilde{\chi}_E(y) \phi(x-y)
\ud y - \int_{\Phi(B_1(x)\setminus B_{\eta_\eps}(x))}
\widetilde{\chi}_{\widehat E} (y) \phi(\widehat x - y) 
\ud y\right|.
\end{align*}
Translation by $\tau = x-\widehat{x}$ in the second integral yields
\begin{align*}
{\rm J} &=
\left|\int_{A}
\widetilde{\chi}_E(y) \phi(x-y)
\ud y - \int_{\widehat{A}_{\tau}} \widetilde{\chi}_{\widehat{E}_{\tau}} (y) \phi(x-y)
\ud y  \right|,
\end{align*}
where we use notation
$$
A = B_1(x)\setminus B_{\eta_{\eps}}(x), \qquad \widehat{A}_{\tau} =\widehat{A} +x -\widehat{x}, \qquad \widehat{E}_{\tau} = \widehat{E}+x-\widehat{x}.
$$
We set $\rho=\|\Phi - {\rm Id}\|_{\infty}$. We estimate $J$ as follows
\begin{align}
{\rm J} &= \left|\int_{\Rd} \left( \chi_{A\cap E^c} (y) - \chi_{A\cap E}(y) - \chi_{\widehat{A}_{\tau}\cap \widehat{E}_{\tau}^c}(y) + \chi_{\tauhA \cap \tauhE}(y) \right) \phi(x-y) \ud y\right|\nonumber\\
&\leq 
\int_{\Rd} \left| \chi_{A\cap E^c} (y) - \chi_{\tauhA \cap \tauhE^c}(y) \right| j(x-y) \ud y  
+
 \int_{\Rd} \left| \chi_{A\cap E}(y) - \chi_{\tauhA \cap \tauhE}(y) \right| j(x-y) \ud y
\nonumber \\
&\leq \left(\big|\big(A\cap E^c\big)\triangle \big(\tauhA \cap \tauhE^c\big)\big| 
+ \big|\big(A\cap E\big)\triangle \big(\tauhA \cap \tauhE\big)\big|\right) j(\eta_{\eps} - 2\rho). \label{measure-of-sets}
\end{align}
The estimate from \eqref{measure-of-sets} is justified by the fact that $j$ is radially non-increasing and for $y \in \tauhA$ and $z=\Phi^{-1}(y-x+\widehat{x})$ we have 
$$
|y-x| =|\Phi(z) -\widehat{x}| \geq |z-x| -|x-\widehat{x}| - |\Phi(z)-z| \geq\ \eta_{\eps} - 2\rho.
$$
We next estimate Lebesgue measure of sets in \eqref{measure-of-sets}. 
We begin with the set
\begin{align*}
\big(A\cap E\big) \setminus \big(\tauhA \cap \tauhE \big) 
&= \Big(\big(A\cap E\big) \setminus \tauhA \Big) \cup \Big(\big(A\cap E\big) \setminus \tauhE \Big) \\
&\subset \big(A\setminus \tauhA\big) \cup \Big(\big(E \setminus \tauhE\big) \cap B_1(x)\Big).
\end{align*}
We take $z\in E$ such that
$\operatorname{dist}(z,E^c)> 4 \rho$. For $y \in E^c$ it holds
$$
|\Phi(z) - \Phi(y)| \geq |z-y| - |\Phi(z)-z| - |y-\Phi(y)| > 2\rho,
$$ 
which implies $B(\Phi(z), 2\rho) \subset \widehat{E}$. Further, 
$$
|z - (\Phi(z) +x-\widehat{x})| \leq |z-\Phi(z)| + |x-\widehat{x}| \leq 2 \rho,
$$ 
yields
$
z\in B(\Phi(z) +z-\widehat{x}, 2\rho) \subset \tauhE.
$
We conclude that 
\begin{align*}
\big(E \setminus \tauhE\big) \cap B_1(x) \subset (\partial E)_{4\rho} \cap B_1(x).
\end{align*}
If, on the other hand, $z\in A$ is such that 
$\operatorname{dist}(z,A^c)> 4 \rho$, then for $y \in A^c$ we have
$$
|\Phi(z) - \Phi(y)| \geq |z-y| - |\Phi(z)-z| - |y-\Phi(y)| > 2\rho,
$$ 
which implies $B(\Phi(z),2\rho) \subset \widehat{A}$. Further, 
$$
|z-(\Phi(z)+x-\widehat{x})| \leq |z-\Phi(z)| + | x-\widehat{x}| \leq 2\rho,
$$
yields $z \in B(\Phi(z)+x-\widehat{x}, 2\rho) \subset \tauhA$. We conclude that
\begin{align*}
A\setminus  \tauhA \subset (\partial A)_{4\rho} \cap B_1(x).
\end{align*}
For the other half of the second symmetric difference in \eqref{measure-of-sets} we write
\begin{align*}
\big(\tauhA \cap \tauhE\big) \setminus\big(A\cap E\big)  &= 
\Big(\big(\tauhA\cap \tauhE\big) \setminus A\Big) \cup \Big(\big(\tauhA\cap \tauhE\big) \setminus E\Big) \\
&\subset \big(\tauhA\setminus A\big) \cup \Big( \big( \tauhE \setminus E\big) \cap \tauhA\Big) .
\end{align*}
If $z\in \tauhE$ is such that
$\operatorname{dist}(z, \tauhE^c)> 4 \rho$,
then, by setting $w= \Phi^{-1} (z-x+\widehat{x})$, we obtain for $y \in E^c$  
$$
|w - y| \geq |\Phi(w)-\Phi(y)| - |w- \Phi(w)| - |\Phi(y)-y |> 2\rho,
$$ 
which implies $B(w, 2\rho) \subset E$. Further, 
$$
|z -w| = |\Phi(w)+x-\widehat{x}-w| \leq  |\Phi(w) - w| + |x-\widehat{x}| \leq 2 \rho,
$$
yields $z\in B(w, 2\rho) \subset E$. 
We conclude that 
\begin{align*}
\tauhE \setminus E \subset \big(\partial (\tauhE)\big)_{4\rho} \cap \tauhA.
\end{align*} 
If, on the other hand, $z\in \tauhA$ and
$\operatorname{dist}(z,\tauhA^c)> 4 \rho$, 
then, by setting $w= \Phi^{-1} (z-x+\widehat{x})$, we have for $y \in A^c$
$$
|w-y| \geq |\Phi(w) - \Phi(y)| -|\Phi(y)-y| - |w-\Phi(w)| > 2\rho,
$$ 
which implies $B(w, \rho) \subset A$. Further, 
$$
|z-w| = |\Phi(w)+x-\widehat{x}-w| \leq |\Phi(w)-w| + | x-\widehat{x}| \leq 2\rho,
$$
yields $z \in B(w, 2\rho) \subset A$ and we obtain 
\begin{align*}
\tauhA\setminus A\subset \big( \partial ( \tauhA ) \big)_{4\rho}\cap \tauhA
\end{align*}
We arrive at
\begin{align*}
\big(A\cap E\big)\triangle \big(\tauhA \cap \tauhE\big) 
& \subset \big((\partial E)_{4\rho} \cap B_1(x)\big) 
\cup \big((\partial A)_{4\rho} \cap B_1(x)\big) \\
&\qquad \,
\cup \Big(\big(\partial (\tauhE)\big)_{4\rho}\cap \tauhA\Big)
\cup \Big(\big(\partial( \tauhA)\big)_{4\rho} \cap \tauhA\Big).
\end{align*}
Proceeding similarly for the first symmetric difference in \eqref{measure-of-sets}, we find that
\begin{align*}
\big(A\cap E^c\big)\triangle \big(\tauhA \cap \tauhE^c\big)
& \subset  \big(A\setminus \tauhA\big) \cup \big(E^c \setminus \tauhE^c\big) \cup
\big(\tauhA \setminus A\big) \cup \big(\tauhE^c \setminus E^c\big)\\
&\subset  \big((\partial A)_{4\rho}\cap B_1(x)\big) \cup \big((\partial E)_{4\rho} \cap B_1(x)\big)\\
&\qquad \cup \Big(\big(\partial (\tauhE)\big)_{4\rho} \cap \tauhA\Big)
\cup\Big(\big(\partial (\tauhA)\big)_{4\rho} \cap \tauhA\Big).
\end{align*}
Observe that if $z \in \tauhA$, that is $z= \Phi(y) + x - \widehat{x}$ for some $y\in A$, then
$$
|z-x| = |\Phi(y) -\widehat{x}| \leq |\Phi(y)-y| + |y-x| + |x-\widehat{x}| \leq 1+2\rho,
$$
hence $z\in B(x, 1+2\rho)\subset B(x,2)$. 

By a detailed inspection of the proof of \cite[Lemma 2.2]{Aikawa} we deduce that as $E$ satisfies the interior and exterior ball
conditions with radius $\rr$, it is a $\calC^{1,1}$ domain with $r_0=\rr/2$ and $\lambda$ depending only on $\rr$.
By Lemma \ref{lm:Lip-estimate2}, there exists $c_1= c_1(\rr, d)>0$ such that
$$
|(\partial E)_{4\rho} \cap B(x,1)| \leq 4c_1 \rho.
$$
Similarly, since $\widehat{E}$ satisfies the interior and exterior ball
conditions with radius $\rr/2$, it is a $\calC^{1,1}$ domain with $r_0=\rr/4$ and $\lambda$ depending only on $r_0$. 
Hence, there exists $c_2=c_2(\rr,d)>0$ such that
$$
\big|\big(\partial (\tauhE)\big)_{4\rho} \cap B(x,2)\big| \leq 4c_2 \rho.
$$
By Lemma \ref{lm:Lip-estimate2}, there exist $c_3= c_3(\eta_{\eps}, d)>0$ and $c_4= c_4(\eta_{\eps}, d)>0$ such that 
$$
|(\partial A)_{4\rho} \cap B(x,1)| \leq 4c_3 \rho \qquad \textnormal{and} \qquad
\big|\big(\partial (\tauhA)\big)_{4\rho} \cap B(x,1)\big| \leq 4c_4 \rho.
$$
We finally conclude that there is a constant $c_5 = c_5(\eta_{\eps}, \rr, d)>0$ such that
$$
{\rm J} \leq c_5 \rho j(\eta_{\eps} - 2\rho).
$$

We proceed similarly with the integral ${\rm I}$. Let $K>1$ be such that
$$
{\rm I_2} = \left| \int_{B^c_K(x)} \widetilde{\chi}_{E} (y) \phi(x - y) \ud y
-  \int_{B^c_K(x)} \widetilde{\chi}_{E} (y) \phi(\widehat x - y) \ud y \right| < \frac{\eps}{4}.
$$
We clearly have ${\rm I_1} = {\rm I} - {\rm I}_2$ and by a substitution we obtain
\begin{align*}
{\rm I_1} &=
\left|\int_{F}
\widetilde{\chi}_E(y) \phi(x-y)
\ud y - \int_{F_{\tau}} \widetilde{\chi}_{E_{\tau}} (y) \phi(x-y)
\ud y  \right|,
\end{align*}
where
$$
F = B_K(x)\setminus B_1(x), \qquad \tauF = F +x -\widehat{x}, \qquad \tauE ={E}+x-\widehat{x}.
$$
Using analogous arguments as in the case of ${\rm J}$, we arrive at
\begin{align*}
{\rm I_1} &\leq \left(\big|\big(F\cap E^c\big) \triangle \big(\tauF \cap \tauE^c\big)\big| + \big|\big(F\cap E\big) \triangle \big(\tauF \cap \tauE\big)\big| \right) j(1-\rho) \\
&\leq 2j(1-\rho) \left(|(\partial E)_{\rho} \cap B_K(x)\big|+ 
\big|(\partial F)_{\rho} \cap B_K(x)\big|  +\big|(\partial (\tauE))_{\rho} \cap \tauF\big|
+\big|(\partial (\tauF))_{\rho} \cap \tauF\big| \right)\\
&\leq c_6 \rho j(1-\rho),
\end{align*}
for a constant $c_6=c_6(K, \rr, d)>0.$

To estimate quantity \eqref{int2}, we observe that
for $\|\Phi - \mathrm{Id}\|_{\calC^1}$ small enough,
$$
\Big(\Phi\big(B_1(x)\setminus B_{\eta_\eps}(x)\big)\Big)
\triangle\Big(B_1(\widehat x)\setminus B_{\eta_\eps}(\widehat x)\Big)
\subset B_{2\eta_\eps}(\widehat x)\setminus 
\big(\widehat B_{\frac \rr 2}^{\inn}\cup \widehat B_{\frac \rr 2}^{\ou}\big).
$$
Hence, by Lemma \ref{lem43} we have
\begin{equation*}
\begin{aligned}
&\left|\int_{\Phi(B_1(x)\setminus B_{\eta_\eps}(x)}\widetilde{\chi}_{\widehat{E}} (y) \phi(\widehat{x}-y) \ud y -
\int_{B_1(\widehat{x})\setminus B_{\eta_{\eps}}(\widehat{x})}\widetilde{\chi}_{\widehat{E}} (y) \phi(\widehat{x}-y) \ud y\right|\\
&\leq \int_{B_{2\eta_\eps}(\widehat x)\setminus (\widehat B_{\frac \rr 2}^{\inn}\cup \widehat B_{\frac \rr 2}^{\ou})}
\phi(\widehat{x}-y) \ud y
\leq \omega_{\frac{\rr}{2}}(2\eta_\eps)\leq\frac{\eps}{4}
\end{aligned}
\end{equation*}
and the proof is completed.
\end{proof}

\subsection{The first variation of nonlocal perimeters}\label{subsec:variation}
We recall (see e.g.\ \cite{Cygan-Grzywny-Per}) that to the kernel $\phi $ we can attach a corresponding nonlocal perimeter which for a given Borel set $E\subset\Rd$ is defined as
\begin{align}\label{Per-phi-def}
\Per_{\phi}(E) = \int_E\int_{E^c} \phi(x-y)\, \ud y\,  \ud x.
\end{align}
We note that by \cite[Lemma 2.1]{Cygan-Grzywny-Per}, 
for any $E\subset \Rd$ of finite Lebesgue measure, we have $\Per_{\phi} (E) = \Per_{\phi} (E^c)$.  
One can easily verify that $\Per_{\phi}$ satisfies the axioms of generalized perimeter given in \cite{Chambolle-Archiv}. In particular, $\Per_{\phi}(E) < \infty$ for any $E \in \Reg$.
It turns out that for sets of finite Lebesgue measure and of finite classical perimeter, their $\phi$-perimeter can be computed through the formula 
\begin{align}\label{fromula-per}
\Per_{\phi}(E)= \mathcal{F}_{\phi}(\chi_E),\qquad \mathcal{F}_{\phi}(u) = \frac{1}{2} \int_{\Rd}\int_{\Rd} |u(x+y)-u(y)| \phi(x)\, \ud x\, \ud y.
\end{align}
We aim to show that the mean curvature $H_{\phi}$ is the first variation of $\Per_{\phi}$ in the sense of the following definition, see \cite{Chambolle-Archiv}.
A curvature $H(x,E)$, defined for $E\subset \Reg$ and $x\in\partial E$, is the first variation of a given perimeter $\Per$ if for every $E\subset \Reg$, and any one-parameter family of diffeomorphisms $(\Phi_{\eps})$ of class $\calC^2$ both in $x$ and $\eps$ with $\Phi_0(x)=x$, it holds
\begin{align}\label{def-curv-per}
\frac{\ud}{\ud\eps} \Per(\Phi_{\eps} (E)) \Big|_{\eps=0} = \int_{\partial E} H(x,E) \psi (x) \cdot \nu (x) \, \ud\calH^{d-1}(x),\quad \psi(x) = \frac{\partial \Phi_{\eps}}{\partial \eps} \Big|_{\eps=0}.
\end{align}

\begin{theorem}
The (normalized) nonlocal mean curvature $ H_\phi (x,E)$ is the first variation (in the sense of the definition given at \eqref{def-curv-per}) of the nonlocal $\phi$-perimeter defined at \eqref{Per-phi-def}.
\end{theorem}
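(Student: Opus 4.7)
The plan is to compute $\frac{\ud}{\ud\eps}\Per_\phi(\Phi_\eps(E))\big|_{\eps=0}$ by linearizing in the boundary perturbation. Setting $E_\eps=\Phi_\eps(E)$ and $v_\eps := \chi_{E_\eps}-\chi_E$, I first observe that, for $\|\Phi_\eps-\mathrm{Id}\|_{\calC^1}$ small, $v_\eps$ is supported in a tubular neighborhood $(\partial E)_{C\eps}$ of $\partial E$. Starting from the symmetric representation \eqref{fromula-per} of $\Per_\phi$, using the algebraic identity $|\chi_F(x)-\chi_F(y)|=(\chi_F(x)-\chi_F(y))^2$ together with the symmetry of $\phi$, a direct expansion and a use of Fubini give formally
\begin{equation*}
\Per_{\phi}(E_\eps)-\Per_\phi(E) = \tfrac{1}{2} \iint v_\eps(x)\bigl(\widetilde{\chi}_E(y)+\widetilde{\chi}_{E_\eps}(y)\bigr)\phi(x-y)\,\ud x\,\ud y.
\end{equation*}
This particular form is advantageous because $(\widetilde{\chi}_E+\widetilde{\chi}_{E_\eps})(y)$ vanishes precisely on $E\triangle E_\eps$, i.e.\ where $v_\eps$ is concentrated, which tames the short-range singularity of $\phi$. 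Since for singular $\phi$ the intermediate formal manipulations may fail to be absolutely convergent, I would justify this identity rigorously by first proving it for the truncation $\phi^\delta:=\phi\chi_{\{|z|>\delta\}}$ (integrable thanks to \eqref{eq:levy}) and then passing to the limit $\delta\downarrow 0$, using the principal-value estimates from the proof of Proposition \ref{curv-well-defined} together with the nonlocal perimeter bounds of \cite{Cygan-Grzywny-Per}.

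The key analytic ingredient is the boundary concentration of $v_\eps/\eps$: from the $\calC^2$ regularity of $\Phi_\eps$ and the expansion $\Phi_\eps(x)=x+\eps\psi(x)+o(\eps)$ in normal coordinates around $\partial E$, a coarea argument yields
\begin{equation*}
\frac{1}{\eps} \int v_\eps(x) f(x)\,\ud x \ \xrightarrow[\eps\downarrow 0]{}\ \int_{\partial E} f(\sigma)(\psi\cdot\nu)(\sigma)\,\ud\calH^{d-1}(\sigma)
\end{equation*}
for every $f$ continuous in a neighborhood of $\partial E$. Applied to $f_\eps(x):=\tfrac{1}{2}\int(\widetilde{\chi}_E+\widetilde{\chi}_{E_\eps})(y)\phi(x-y)\,\ud y$ and recalling that by Proposition \ref{curv-well-defined} the principal-value integral $\int \widetilde{\chi}_E(y)\phi(x-y)\,\ud y$ equals $\kappa_{d-2} H_\phi(x,E)$ on $\partial E$, together with the continuity statement of Proposition \ref{axioms}(C) and the $\calC^2$ convergence $E_\eps\to E$, this produces
\begin{equation*}
\frac{\ud}{\ud\eps}\Per_\phi(\Phi_\eps(E))\Big|_{\eps=0} = \kappa_{d-2}\int_{\partial E} H_\phi(x,E)(\psi\cdot\nu)(x)\,\ud\calH^{d-1}(x),
\end{equation*}
which is the first-variation identity with the normalization constant $\kappa_{d-2}$ included in the definition of $H_\phi$.

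The main obstacle is showing that $f_\eps$ is uniformly bounded and converges to $\kappa_{d-2} H_\phi(\cdot,E)$ uniformly on $\supp v_\eps\subset (\partial E)_{C\eps}$. The naive absolute-value bound $\iint\chi_{(\partial E)_{C\eps}}(x)\chi_{(\partial E)_{C\eps}}(y)\,j(x-y)\,\ud x\,\ud y$ on the quadratic-type contribution is too weak for strongly singular kernels (for instance, in the fractional case $\phi(z)=|z|^{-d-\alpha}$ it only yields $O(\eps^{1-\alpha})$, not $o(\eps)$). The cancellation responsible for the correct bound is of the same nature as the one used in the proof of Proposition \ref{curv-well-defined}: in the graph representation $y_d=f(y')$ of $\partial E$ around any $x\in\partial E$, the symmetric combination $(\widetilde{\chi}_E+\widetilde{\chi}_{E_\eps})$ is odd in the vertical direction up to a strip of height $O(|y'|^{1+\beta})$, so integration against $\phi$ produces a finite quantity controlled by the $\beta=1$ moment bound \eqref{eq:levy}. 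Propagating these estimates uniformly as $\eps\downarrow 0$, relying crucially on the uniform interior/exterior ball condition used in Proposition \ref{axioms}, will complete the identification of $H_\phi$ as the first variation of $\Per_\phi$.
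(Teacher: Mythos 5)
Your route is genuinely different from the paper's: the paper never differentiates $\Per_\phi(\Phi_\eps(E))$ directly, but instead invokes \cite[Proposition 4.5]{Chambolle-Archiv}, which (given the continuity property (C)) reduces the first-variation statement to the level-set identity $\Per_{\phi}(\{\vp\geq t_1\}) =\Per_{\phi}(\{\vp\geq t_2\}) +\kappa_{d-2}\int_{\{t_1 <\vp<t_2\}} H_{\phi}(x, \{\vp\leq \vp(x)\})\, \ud x$ for $\calC^2_c$ test functions $\vp$; that identity is then obtained by a Fubini/symmetry manipulation of the truncated kernel $\phi(x-y)\chi_{|x-y|>\eps}$ and a passage to the limit $\eps\da 0$ via Lemma \ref{lem43}. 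Your algebraic identity $\Per_\phi(E_\eps)-\Per_\phi(E)=\iint v_\eps(x)\,\tfrac12\bigl(\widetilde{\chi}_E+\widetilde{\chi}_{E_\eps}\bigr)(y)\,\phi(x-y)\,\ud y\,\ud x$ is correct (one checks that $v_\eps(x)\bigl(\chi_E(x)+\chi_{E_\eps}(x)-1\bigr)\equiv 0$), and the weak convergence of $v_\eps/\eps$ to $(\psi\cdot\nu)\,\calH^{d-1}\llcorner\partial E$ is standard.

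The gap is in the step you flag as the main obstacle, and it is not merely technical: the claim that $f_\eps$ is uniformly bounded and converges uniformly to $\kappa_{d-2}H_\phi(\cdot,E)$ on $\operatorname{supp} v_\eps$ is false for singular kernels. Take $\phi(z)=|z|^{-d-\alpha}$ and let $E$ be, locally near the point of interest, the half-space $\{y_d\le 0\}$ with $\Phi_\eps$ the translation by $\eps e_d$. For $x=(0,x_d)$ with $0<x_d<\eps$ a direct computation gives $f_\eps(x)=C_{d,\alpha}\bigl((\eps-x_d)^{-\alpha}-x_d^{-\alpha}\bigr)$, which blows up like $\operatorname{dist}(x,\partial E\cup\partial E_\eps)^{-\alpha}$, while $H_\phi(\cdot,E)=0$ there. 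The cancellation you invoke (approximate oddness in the vertical variable for \emph{fixed} $x$, as in Proposition \ref{curv-well-defined}) only makes $f_\eps(x)$ finite for $x$ strictly between the two boundaries; it cannot make $f_\eps$ bounded, let alone uniformly convergent. The cancellation actually responsible for $\tfrac1\eps\int v_\eps f_\eps$ having a limit occurs across the $x$-integration, by pairing points at equal distances on either side of the interface where $f_\eps$ has opposite singularities (in the flat example the two $\eps^{-\alpha}$-type contributions cancel exactly upon integrating in $x_d$). Quantifying this pairing for curved boundaries and general admissible $\phi$ is precisely the delicate second-order estimate your outline does not supply, so as written the final limit identification does not go through; either that argument must be carried out, or one should retreat to the paper's indirect route through \cite{Chambolle-Archiv}.
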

\begin{proof}
Since $H_{\phi}(x,E)$ satisfies condition (C) of  Lemma \ref{axioms}, in view of  \cite[Proposition 4.5]{Chambolle-Archiv} the result will follow if we
show that for any $\vp\in \calC_c^2(\Rd)$, and $t_1<t_2$ such that $D\vp \neq 0$ in the set $\{t_1 \leq \vp \leq t_2\}$, it holds
\begin{align*}
\Per_{\phi}(\{\vp\geq t_1\})
=\Per_{\phi}(\{\vp\geq t_2\}) 
+\kappa_{d-2}
\int_{\{t_1 <\vp<t_2\}}  H_{\phi}(x, \{\vp\leq \vp(x)\})\, \ud x.
\end{align*}
Since $D\vp \neq 0$ in the set $\{t_1 \leq \vp \leq t_2\}$, we have $0 \notin (t_1,t_2)$. We note that $\{\vp \geq s\} \subset \Reg$ for any $s>0$.
If $0<t_1<t_2$, then according to \eqref{fromula-per}, 
\begin{align*}
\Per_{\phi}(\{\vp\geq t_1\}) - 	\Per_{\phi}(\{\vp\geq t_2\})
&=
\frac{1}{2} \int_{\Rd} \int_{\Rd} |\chi_{\{\vp\geq t_1\}} (x) - \chi_{\{\vp \geq t_1\}}(y)| {\phi}(x-y)\, \ud y\, \ud x\\
&\quad -\frac{1}{2} \int_{\Rd} \int_{\Rd} |\chi_{\{\vp \geq t_2\}} (x) - \chi_{\{\vp \geq t_2\}}(y)| {\phi}(x-y)\, \ud y\, \ud x.
\end{align*}
Next, for any  $\eps>0$ we have
\begin{align*}
&\frac{1}{2} \int_{\Rd} \int_{\Rd} |\chi_{\{\vp\geq t_1\}} (x) - \chi_{\{\vp \geq t_1\}}(y)| {\phi}(x-y) \chi_{|x-y|>\eps}(y) \, \ud y\, \ud x\\
&\quad -\frac{1}{2} \int_{\Rd} \int_{\Rd} |\chi_{\{\vp\geq t_2\}} (x) - \chi_{\{\vp\geq t_2\}}(y)| {\phi}(x-y) \chi_{|x-y|>\eps}(y) \, \ud y\, \ud x \\
&=\frac{1}{2} \int_{\Rd} \int_{\Rd} \chi_{\{t_1<\vp<t_2\}}(x) (\chi_{\{\vp\geq t_1\}}(y) - \chi_{\{\vp\geq t_2\}}(y)) {\phi}(x-y) \chi_{|x-y|>\eps}(y) \, \ud y\, \ud x \\
&\quad+\frac{1}{2} \int_{\Rd} \int_{\Rd} \chi_{\{t_1<\vp<t_2\}}(y) (\chi_{\{\vp\geq t_1\}}(x) - \chi_{\{\vp\geq t_2\}}(x)) {\phi}(x-y) \chi_{|x-y|>\eps}(y) \, \ud y\, \ud x \\
&= \int_{\Rd} \int_{\Rd} \chi_{\{t_1<\vp<t_2\}}(x) (\chi_{\{\vp \geq t_1\}}(y) - \chi_{\{\vp \geq t_2\}}(y)) {\phi}(x-y) \chi_{|x-y|>\eps}(y)  \, \ud y\, \ud x  \\
&= \int_{\{t_1<\vp<t_2\}} \int_{\Rd} (\chi_{\{\vp\leq \vp(x)\}^c} (y) -\chi_{\{\vp\leq \vp(x)\}}(y)) {\phi}(x-y) \chi_{|x-y|>\eps}(y)  \, \ud y\, \ud x   \\
&\quad
+\int_{\{t_1<\vp<t_2\}} \int_{\Rd}  (\chi_{\{\vp(x)<\vp<t_2\}}(y) -\chi_{\{t_1<\vp<\vp(x)\}}(y)) {\phi}(x-y) \chi_{|x-y|>\eps}(y)  \, \ud y\, \ud x \\
&=\int_{\{t_1<\vp<t_2\}} \int_{\Rd} (\chi_{\{\vp\leq \vp(x)\}^c} (y) -\chi_{\{\vp\leq \vp(x)\}}(y)) {\phi}(x-y) \chi_{|x-y|>\eps}(y)  \, \ud y\, \ud x ,
\end{align*}
where the last equality follows as the change of the order of integration and symmetry of ${\phi}$ imply
\begin{align*}
\int_{\{t_1<\vp<t_2\}} \int_{\{\vp(x)<\vp<t_2\}} & {\phi}(x-y) \chi_{|x-y|>\eps}(y)  \, \ud y\, \ud x \\
&= \int_{\{t_1<\vp<t_2\}} \int_{\{t_1<\vp<\vp(y)\}} {\phi}(x-y) \chi_{|x-y|>\eps}(y)\,  \ud x \,  \ud y\\
&= \int_{\{t_1<\vp<t_2\}} \int_{\{t_1<\vp<\vp(x)\}} {\phi}(x-y) \chi_{|x-y|>\eps}(y)\,  \ud y \,  \ud x.
\end{align*}
Since we know that $\vp \in \calC_c^2(\Rd)$ and $D\vp \neq 0$ in $\{t_1 \leq \vp \leq t_2\}$, we can apply the implicit function theorem to show that sets $\{\vp \leq \vp(x)\} -x$ satisfy uniform ball condition at $0$. Then a combination of Lemma \ref{lem43}, the proof of Lemma \ref{lem43} and Lebesugue's dominated convergence theorem yields the desired result for $\eps \da 0$.

If $t_1<t_2<0$, then $\{\vp \leq t_1\}, \{\vp \leq t_2\} \subset \Reg$. Moreover, $\Per_{\phi}(\{\vp\geq t_1\})$ and $\Per_{\phi}(\{\vp\geq t_2\})$ are equal to $\Per_{\phi}(\{\vp < t_1\})$ and $\Per_{\phi}(\{\vp <t_2\})$, respectively, whence we can easily repeat the calculations above in this case.
\end{proof}

\subsection{Existence of viscosity solutions}\label{subsec:existence}
By Proposition \ref{axioms} we know that $H_\phi$ satisfies the basic axioms (monotonicity, translation invariance and continuity) of \cite{Chambolle-Archiv} which
allow us to classify $H_\phi$ as a proper version of nonlocal curvature. In what follows we aim to study
existence, uniqueness and stability of viscosity solutions to the following parabolic Cauchy problem
\begin{equation}
\label{levelsetf}
\begin{cases}
 \partial_t u(x,t) + |\nabla u(x,t)| H_\phi(x,  \{ y: \, u(y,t)\ge u(x,t)\}) \,=\,0 \\ 
 u(\cdot,0) = u_0(\cdot),
\end{cases}
\end{equation}
where $u_0 \colon \R^d\to\R$ is a given continuous function which is constant on the complement of a compact set.
We remark that if the superlevel sets of $u$ are not smooth then the meaning of \eqref{levelsetf} is unclear and for this reason it is necessary to use a definition of a solution based on appropriate smooth test functions such that curvatures of their level sets  are
well defined. Such appropriate definition was proposed in \cite{Chambolle-Archiv} and this approach was undertaken also  in \cite[Section 2.2; Definition 2.3]{Cesaroni-Stability}. 
In order to establish existence of viscosity solutions to \eqref{levelsetf} we need to consider the following quantities, for any $\rho>0$,
\begin{equation}\label{defbarc0}
\overline{c}(\rho) \ :=\ 
\max_{x\in\partial B_\rho} \max\{H_\phi(x,\overline B_\rho),-H_\phi(x,\R^d\setminus B_\rho)\},
\end{equation}
\begin{equation}\label{defbarc}
\underline{c}(\rho) \ :=\ 
\min_{x\in\partial B_\rho} \min\{H_\phi(x, \overline B_\rho),-H_\phi(x,\R^d\setminus B_\rho)\}.
\end{equation}
These quantities are well-defined in light of property (C), and condition (M) implies that they are monotone functions of parameter $\rho$.

The following symmetry condition is easy to check.
\begin{itemize}
\item[(S)] Symmetry: for all $E \in \Reg$ and for every $x\in\partial E$, it holds $H_\phi(x,E) = - H_\phi(x,\operatorname{int}(E)^c)$.
\end{itemize}
Since the curvature $H_\phi$ is positive for any closed ball, the following condition extracted from \cite{Cesaroni-Stability} is trivially satisfied.
\begin{itemize}
\item[(B)] Ball condition: there is $K>0$ such that 
\begin{align*}
\underline{c}(\rho) \geq -K\rho,\quad \rho \geq 1.
\end{align*}
\end{itemize}
According to \cite[Theorem 2.9]{Cesaroni-Stability} conditions (M), (T), (C') and (B) imply the following result. 
\begin{theorem}\label{Thm:viscosity}
Let $H_\phi$ be the nonlocal curvature defined at  \eqref{eq:mean} for a kernel $\phi$ satisfying \eqref{eq:j-bound}.
Let $u_0$ be a given uniformly continuous function such that it is constant on the complement of a compact set. Then there exists a unique viscosity solution $u\colon \R^d\times [0,\infty)\to \R$ (in the sense of \cite[Definition 2.3]{Cesaroni-Stability}) to the problem \eqref{levelsetf}.
\end{theorem}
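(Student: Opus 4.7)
The plan is to reduce the statement to a direct application of \cite[Theorem 2.9]{Cesaroni-Stability}. That theorem yields existence and uniqueness of a continuous viscosity solution to \eqref{levelsetf} provided the nonlocal curvature $H_\phi$ satisfies monotonicity (M), translation invariance (T), uniform continuity (C'), symmetry (S), and the ball condition (B). Properties (M), (T) and (C') have already been established in Proposition \ref{axioms}, so the work is to verify (S) and (B).

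The symmetry axiom (S) can be checked directly: since every $E\in\Reg$ has $|\partial E|=0$, we have
$$
\widetilde{\chi}_{\mathrm{int}(E)^c}(y) \,=\, \chi_{\mathrm{int}(E)}(y)-\chi_{\mathrm{int}(E)^c}(y) \,=\, -\widetilde{\chi}_E(y) \qquad \text{for a.e.\ } y\in\R^d,
$$
and substituting into \eqref{eq:mean} yields $H_\phi(x,\mathrm{int}(E)^c)=-H_\phi(x,E)$ at every point $x\in\partial E$.

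For the ball condition (B) I would prove the stronger bound $\underline c(\rho)\ge 0$ for every $\rho>0$. Fix $x\in\partial B_\rho$ and consider the point reflection $y\mapsto 2x-y$ through $x$. By the symmetry $\phi(-z)=\phi(z)$ from \eqref{eq:j-bound}, this map preserves both the kernel $y\mapsto\phi(x-y)$ and the set $B_r(x)^c$ for every $r>0$. Denoting the centre of $B_\rho$ by $c$ and using $|x-c|=\rho$ together with $|y-c|<\rho$ for $y\in B_\rho$, the triangle inequality gives
$$
|(2x-y)-c| \,=\, |2(x-c)-(y-c)| \,\ge\, 2\rho-|y-c| \,>\, \rho,
$$
so $2x-y\notin\overline B_\rho$. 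Consequently $\widetilde{\chi}_{\overline B_\rho}(y)+\widetilde{\chi}_{\overline B_\rho}(2x-y)\ge 0$ for a.e.\ $y\in\R^d$, and symmetrising the integrand yields
\begin{align*}
H_\phi(x,\overline B_\rho) \,=\, \frac{1}{2\kappa_{d-2}}\lim_{r\da 0}\int_{B_r(x)^c}\bigl[\widetilde{\chi}_{\overline B_\rho}(y)+\widetilde{\chi}_{\overline B_\rho}(2x-y)\bigr]\phi(x-y)\,\ud y \,\ge\, 0.
\end{align*}
Combining with (S) gives $-H_\phi(x,\R^d\setminus B_\rho)=H_\phi(x,\overline B_\rho)\ge 0$, so $\underline c(\rho)\ge 0$ and (B) holds with any constant $K>0$.

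With the five structural axioms in place, \cite[Theorem 2.9]{Cesaroni-Stability} delivers the existence and uniqueness of a continuous viscosity solution to \eqref{levelsetf} in the sense of \cite[Definition 2.3]{Cesaroni-Stability}. The only genuinely non-trivial step is the verification of (B); the subtlety there is that $\phi$ need not be rotationally invariant, which rules out reflection across the tangent hyperplane at $x$, but the central reflection through the boundary point $x$ is compatible with $\phi(-z)=\phi(z)$ and is enough to conclude.
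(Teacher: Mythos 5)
Your proposal is correct and takes essentially the same route as the paper: reduce to \cite[Theorem 2.9]{Cesaroni-Stability} by invoking Proposition \ref{axioms} for (M), (T), (C') and then checking (S) and (B). The paper simply asserts that the curvature of any closed ball is nonnegative so that (B) is ``trivially satisfied''; your central-reflection argument through the boundary point $x$ (valid precisely because $\phi(-z)=\phi(z)$, as you note) supplies the detail behind that assertion.
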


\section{Asymptotics of nonlocal curvatures}
This section is the core of the present article. We study the  asymptotic behaviour of the nonlocal curvature and we start with the case when it approaches  the classical curvature. 

In what follows, we associate the following measure to the kernel $\phi$
\begin{align}\label{meas-lamb}
\lambda_{\phi}(\ud x) = C_{\phi}^{-1} (1\wedge |x|) \phi(x) \ud x
\end{align}
where 
$C_{\phi} = \int_{\Rd} (1\wedge |x|) \phi(x) \ud x.$

\subsection{Convergence towards the classical curvature}

\begin{theorem}\label{thm2}
Let $\{j_{\eps}\}_{\eps>0}$ be a family of radial and radially non-increasing positive
functions such that for every $\eps >0$
$$
\int_{\Rd} \left(1\wedge |x|\right) j_{\eps} (x) \, \ud x < \infty.
$$
Further, let $\{g_{\eps}\}_{\eps>0}$ be a family of non-negative 
continuous functions defined on $\Ss^{d-1}$ such that $g_{\eps}(-x) = g_{\eps} (x)$ and $(g_{\eps})_{\eps>0}$ converges uniformly to a function $g \colon \Ss^{d-1} \to [0,\infty)$. We set 
$$
\phi_{\eps}(x) = g_{\eps}\left(\frac{x}{|x|}\right) j_{\eps}(x),  \qquad \eps >0.
$$ 
Suppose that
\begin{align}\label{tight-zero}
\lim_{\eps\downarrow 0} \lambda_{\phi_{\eps}} (B_R^c) = 0, \qquad \textnormal{for any} \,\, R>0.
\end{align}
Let $\varepsilon_n \downarrow 0$ and let $E_n \in \Reg$ be such that $E_n \to E$ in $\Reg$ for some $E\in \Reg$. Then, for every $x\in\partial E \cap \partial E_n$,
\begin{equation}\label{conv-dir-nonlocal-to classical}
\lim_{n\to \infty} {C}_{\phi_{\eps_n}}^{-1}  K_{\phi_{\eps_n},e} (x,E_{n})
= C_g^{-1}g(e,0) K_{e}(x,E), \quad e\in \Ss^{d-1}(x) \cap \nu(x)^{\perp},
\end{equation}
and 
\begin{equation}\label{conv-mean-nonlocal-to classical}
\lim_{n\to \infty} {C}_{\phi_{\eps_n}}^{-1}  H_{\phi_{\eps_n}} (x,E_n)
=  \frac{1}{C_g\kappa_{d-2}} \int_{e\in \Ss^{d-1}(x) \cap \nu(x)^{\perp}} g(e,0) K_{e}(x,E)  \calH^{d-2}(\ud e),
\end{equation}
where $C_g=\int_{\Ss^{d-1}}g(\theta) \calH^{d-1}(\ud \theta)$ and $\nu(x)$ is the outer unit normal vector to $\partial E$ at $x$.
\end{theorem}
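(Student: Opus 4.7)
The plan is to reduce to a one-dimensional asymptotic analysis via Proposition \ref{prop:K-formula}, exploiting the concentration of $\lambda_{\phi_\eps}$ at the origin supplied by \eqref{tight-zero}. Working in normal coordinates at $x$, so that $x=0$ and $\nu(x)=(0,\ldots,0,1)$, the convergence $E_n\to E$ in $\Reg$ provides a common local chart of radius $R>0$ on which both $E_n$ and $E$ are subgraphs of $\calC^2$ functions $f_n,f$ with $f_n(0)=0$, $\nabla f_n(0)=0$, and $f_n\to f$ in $\calC^2$; in particular $K_e(E_n)=-D^2 f_n(0)[e,e]\to K_e(E)$. On this chart, Proposition \ref{prop:K-formula} decomposes $K_{\phi_\eps,e}(E_n)$ into an antisymmetric integral over $h\in[|f_n(\rho e)|,|f_n(-\rho e)|]$ and a main integral over $h\in[-f_n(\rho e),f_n(\rho e)]$, each weighted by $\phi_\eps(\rho e,h)|\rho|^{d-2}$; the contribution from $|\rho|>R$ will be treated directly from \eqref{eq:dir-sym0} using boundedness of $E_n$.

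The antisymmetric part is negligible because $f_n\in\calC^2$ and the leading Taylor term at $0$ is an even quadratic form, so $|f_n(\rho e)-f_n(-\rho e)|=o(\rho^2)$ uniformly in $n$; combined with the monotonicity of $j_\eps$ (to bound $\phi_\eps(\rho e,h)\le g_\eps^{\max}j_\eps(|\rho|)$), this produces an upper bound of order $o_\delta(1)\int_0^\delta\rho^d j_\eps(\rho)\,\ud\rho$. For the main integral on $|\rho|\le\delta$ with $\delta$ small, the expansion $f_n(\rho e)=-\tfrac12 K_e(E_n)\rho^2+o(\rho^2)$ and the factorization $\phi_\eps(\rho e,h)=g_\eps((\rho e,h)/|(\rho e,h)|)j_\eps(\sqrt{\rho^2+h^2})$ allow the angular factor to be replaced by $g_\eps(e,0)$ (error controlled by the modulus of continuity of $g$ and uniform convergence $g_\eps\to g$) and the radial factor by $j_\eps(|\rho|)$ (error controlled by the monotonicity of $j_\eps$ and the smallness $|h|/|\rho|\lesssim\delta$), yielding
\begin{align*}
-\tfrac12\int_{-\delta}^{\delta}\int_{-f_n(\rho e)}^{f_n(\rho e)}\phi_\eps(\rho e,h)|\rho|^{d-2}\,\ud h\,\ud\rho
=(1+o_\delta(1))\,K_e(E_n)\,g_\eps(e,0)\int_0^\delta \rho^d j_\eps(\rho)\,\ud\rho.
\end{align*}
The intermediate range $\delta<|\rho|\le R$ and the exterior part $|\rho|>R$ are controlled using $\phi_\eps\le g_\eps^{\max}j_\eps$, the Lipschitz bound $|f_n|\le L|\rho|$ on the chart, and a $2$-dimensional polar computation on $\pi(e)$, reducing everything to one-dimensional integrals of $r^{d-1}j_\eps(r)$ on $\{r>\delta\}$ and $\{r>R\}$.

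The conclusion then follows from the radial factorization $C_{\phi_\eps}=G_\eps I_\eps$, with $G_\eps:=\int_{\Ss^{d-1}}g_\eps\,\ud\calH^{d-1}\to C_g$ and $I_\eps:=\int_0^\infty(1\wedge r)r^{d-1}j_\eps(r)\,\ud r$. Condition \eqref{tight-zero} is equivalent to the one-dimensional concentrations $I_\eps^{-1}\int_0^\delta\rho^d j_\eps(\rho)\,\ud\rho\to 1$ for every $\delta\in(0,1)$ and $I_\eps^{-1}\int_R^\infty r^{d-1}j_\eps(r)\,\ud r\to 0$ for every $R>1$. Combined with the uniform convergences $g_\eps(e,0)\to g(e,0)$ and $K_e(E_n)\to K_e(E)$, this yields \eqref{conv-dir-nonlocal-to classical}. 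The mean-curvature statement \eqref{conv-mean-nonlocal-to classical} then follows by applying Theorem \ref{thm:formula} to $\phi_{\eps_n}$, dividing by $C_{\phi_{\eps_n}}$, and passing to the limit under the integral over $\Ss^{d-2}$ via Lebesgue's dominated convergence, the domination being a uniform-in-$e$ bound produced along the way. The main technical obstacle is the \emph{uniformity} of all error estimates in both $n$ and $e\in\Ss^{d-2}$, which requires uniform $\calC^2$-convergence of the $f_n$ on a common chart and a careful handling of the mixed $(\eps,\delta)$ limit.
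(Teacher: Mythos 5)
Your proposal is correct and follows essentially the same route as the paper's proof: normal coordinates, the decomposition from Proposition \ref{prop:K-formula} into an antisymmetric remainder plus a main term, tail control via the tightness condition, replacement of the angular factor by $g_\eps(e)$ via its modulus of continuity and of $j_\eps(\sqrt{\rho^2+h^2})$ by $j_\eps(|\rho|)$ via radial monotonicity, the factorization $C_{\phi_\eps}=\kappa_{d-1}^{-1}C_{j_\eps}C_{g_\eps}$, and finally Theorem \ref{thm:formula} with uniformity in $e$ for the mean-curvature statement. No substantive differences to report.
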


\begin{proof}
Since $E_{n} \to E$ in $\Reg$, there exist $\delta>0$ and functions $f_{n}, f \in \calC^2 (B_{\delta}'; [0,\delta))$ such that $f_{n} \to f$ in $\calC^2(B_{\delta}', \R)$ such that $f_{n}(0)=f(0)=0, \nabla f_{n}(0)=\nabla f(0)=0$ and
\begin{align}
\begin{split}
&\partial E_{n} \cap B_{\delta} = \{ (y', f_{n}(y')): y' \in B_{\delta}'\}, \quad\quad\quad\quad
\partial E\cap B_{\delta} = \{ (y', f(y')): y' \in B_{\delta}'\},\\
&E_{n} \cap B_{\delta} = \{ (y', y_d): y' \in B_{\delta}', y_d \leq f_{n} (y') \}, \quad
E\cap B_{\delta} = \{ (y', y_d): y' \in B_{\delta}', y_d \leq f(y')\}.\label{eps-balls}
\end{split}
\end{align}
Without loss of generality we use the normal system of coordinates chosen at the beginning of Section \ref{sec:Prem} in which $x=0$ and \eqref{eq:v} holds. We can also assume that locally at the origin the set $E$ is a subgraph of a function exactly as in \eqref{E-subgraph}.
Such assumption
can be easily dropped if we observe that
the contribution to $K_{\phi_{\eps_n}, e}(E_n)$ coming from 
far away from the origin is bounded uniformly when $\eps_n \downarrow 0$, and whence it does not contribute to the limit. 
We easily find  that 
\begin{align*}
C_{\phi_{\eps}} = \kappa_{d-1}^{-1}C_{j_{\eps}} C_{g_{\eps}},
\end{align*} 
where $C_{g_{\eps}} = \int_{\Ss^{d-1}} g_{\eps}(\theta) \calH^{d-1}(\ud \theta)$. 
We have for any $\eps >0$
\begin{align*}
2K_{\phi_{\eps},e} (E_{n}) &= \int_{\R} \int_{\R} \widetilde{\chi}_{E_{n}}(\rho e,h) \phi_{\eps}(\rho e, h) |\rho|^{d-2} \,  \ud h\, \ud \rho \\
&=\int_{\R} \int_{\R} \widetilde{\chi}_{E_{n}}(\rho e,h) j_{\eps}(\sqrt{\rho^2+ h^2}) g_{\eps}\left(\frac{\rho e, h}{\sqrt{\rho^2+h^2}}\right) |\rho|^{d-2} \,  \ud h\, \ud \rho.
\end{align*}
For any $\eta\in(0,1)$ we split the last integral as follows
\begin{align}\label{I+J}
2K_{\phi_{\eps},e} (E_{n}) = \left(\int_{|\rho |<\eta} \int_{\R} + \int_{|\rho|>\eta}\int_\R \right)\widetilde{\chi}_{E_{n}}(\rho e,h) \phi_{\eps}(\rho e, h) |\rho|^{d-2} \,  \ud h\, \ud \rho = I+J.
\end{align}
Since $g_{\eps}$ converge uniformly to $g\geq 0$, it holds $|g_{\eps}|\leq c$, for some constant $c>0$ and $\eps>0$ small enough. We thus obtain that for $\varepsilon >0$ small enough,
\begin{align}\label{est-J}
\left|J\right| &
\leq 2c \int_{|\rho|>\eta}\int_{\R} |\rho|^{d-2} j_{\eps}(\sqrt{\rho^2+h^2}) \, \ud h \ud \rho  \nonumber \\
&\leq  2c \iint_{\R^2\setminus B_\eta}  (\rho^2+h^2)^{(d-2)/2} j_{\eps}(\sqrt{\rho^2+h^2}) \, \ud h \ud \rho 
=4\pi c  \int_{\eta}^{\infty} r^{d-1} j_{\eps}(r) \, \ud r \nonumber\\
&\leq 4\pi c\eta^{-1}\int_\eta^\infty (1\wedge r)r^{d-1}j_\varepsilon (r)\ud r
=4\pi c\, C_{g_{\eps}}^{-1} \eta^{-1} {C}_{\phi_{\eps}} {\lambda}_{\phi_{\eps}}(B_{\eta}^c).
\end{align}
We next handle integral $I$ from \eqref{I+J}. We use the fact that for $\eta \in (0,1)$ sufficiently small, by \eqref{eps-balls},
\begin{equation}\label{eq:Deps}
D_e^2f_{n}(0) \frac{\rho^2}{2} - w_{n}(\rho) \leq f_{n}(\rho e ) \leq D^2_e f_{n}(0) \frac{\rho^2}{2} + w_{n} (\rho), \quad \rho \in (-\eta, \eta)\setminus\{0\},
\end{equation}
where 
$$
w_{n}(\rho) = \sup_{|\xi ' | \leq |\rho|} |D^2_e f_{n}(\xi ') - D^2_e f_n(0)|  \frac{\rho^2}{2}.
$$
Note that $w_{n}(-\rho)=w_{n}(\rho)$.
We also set
$$
\mathcal{W}_{n}(\eta) = \sup_{0<|\rho|\leq\eta} \frac{w_{n}(\rho)}{\rho^2} = \frac{1}{2} \sup_{|\xi'|\leq \eta} |D_e^2 f_{n}(\xi') -  D_e^2 f_{n}(0)|
$$
and we observe that 
$$ 
\mathcal{W}_{n}(\eta)  \to 0,\quad \textnormal{as} \quad \eta \to 0.
$$
In view of Proposition \ref{prop:K-formula}, we can split integral $I$ as follows
\begin{align*}
I= \int_{-\eta}^{\eta} \int_{|f_{n}(\rho e)|}^{|f_{n}(-\rho e)|}\phi_{\eps}(\rho e, h) |\rho|^{d-2} \ud h \ud \rho  - \int_{-\eta}^{\eta} \int_{-f_{n}(\rho e)}^{f_{n}(\rho e)}  \phi_{\eps}(\rho e, h) |\rho|^{d-2} \ud h \ud \rho
=I_1 + I_2.
\end{align*}
By \eqref{eq:Deps} and since $j_{\eps}$ is radial and radially non-increasing, we obtain
\begin{align}\label{est-I_1}
|I_1|
&\leq 
c \int_{-\eta}^{\eta}  \Big||f_{n}(-\rho e)| - |f_{n}(\rho e)|\Big| j_{\eps}(\rho) |\rho|^{d-2} \ud \rho \nonumber 
\leq 
2c \int_{-\eta}^{\eta} w_{n}(\rho) j_{\eps}(\rho) |\rho|^{d-2} \ud \rho\\
&\leq 
4c \mathcal{W}_{n}(\eta)  \int_{0}^{\eta} j_{\eps}(\rho) \rho^{d} \ud \rho
\leq 4c  C_{g_{\eps}}^{-1} \mathcal{W}_{n}(\eta)    {C}_{\phi_{\eps}}.
\end{align}
To estimate $I_2$
we set $D_{n}=\frac{1}{2} D_e^2 f_{n}(0)$ and $D=\frac{1}{2} D_e^2 f(0)$. By \eqref{eq:111}, we have
\begin{align}\label{est-I_2}
|I_2- 4D_ng_{\eps}(e)C_{g_{\eps}}^{-1}  C_{\phi_{\eps}}| 
&\leq \left|\int_{-\eta}^{\eta} \int_{-D_{n}\rho^2}^{D_{n}\rho^2} \phi_{\eps}(\rho e, h) |\rho|^{d-2}\, \ud h \ud \rho 
- 4D_{n}g_{\eps}(e) C_{g_{\eps}}^{-1}  C_{\phi_{\eps}}\right|
\nonumber \\
&\qquad
+\left|\int_{-\eta}^{\eta} \left(\int_{-f_{n}(\rho e)}^{-D_{n}\rho^2} 
+ \int_{D_{n}\rho^2}^{f_{n}(\rho e)}\right) \phi_{\eps}(\rho e, h) |\rho|^{d-2} \, \ud h \ud \rho \right| \nonumber\\
&\leq |I_3^{(1)}|+ |I_3^{(2)}|+  4|D_{n}| g_{\eps}(e) C_{g_{\eps}}^{-1} C_{\phi_{\eps}} \lambda_{\phi_{\eps}}(B_{\eta}^c) ,
\end{align}
where 
\begin{align*}
I_3^{(1)}
&=
\int_{-\eta}^{\eta} \int_{-D_{n}\rho^2}^{D_{n}\rho^2} \phi_{\eps}(\rho e, h) |\rho|^{d-2} \, \ud h \ud \rho 
- 4D_{n}g_{\eps}(e)C_{g_{\eps}}^{-1}  C_{\phi_{\eps}} \lambda_{\phi_{\eps}}(B_{\eta})\\
I_3^{(2)}
&=
\int_{-\eta}^{\eta} \left(\int_{-f_{n}(\rho e)}^{-D_{n}\rho^2} 
+ \int_{D_{n}\rho^2}^{f_{n}(\rho e)}\right) \phi_{\eps}(\rho e, h) |\rho|^{d-2} \, \ud h \ud \rho .
\end{align*}
By \eqref{eq:Deps},
\begin{align}\label{est-I_3^2}
I_3^{(2)} &\leq \int_{-\eta}^{\eta} \left|\left(\int_{-f_{n}(\rho e)}^{-D_{n}\rho^2} + \int_{D_{n}\rho^2}^{f_{n}(\rho e)}\right) \phi_{\eps}(\rho e, h) \, \ud h \right| |\rho|^{d-2} \ud \rho \nonumber \\
&\leq 2c \int_{-\eta}^{\eta} |f_{n}(\rho e) - D_{n}\rho^2| j_{\eps}(\rho) |\rho|^{d-2}  \, \ud \rho \nonumber \\
&\leq 2c \int_{-\eta}^{\eta} w_{n}(\rho) |\rho|^{d-2} j_{\eps}(\rho) \, \ud \rho
= 4c \int_0^{\eta} w_{n}(\rho) \rho^{d-2} j_{\eps}(\rho) \, \ud \rho \nonumber \\
&\leq 4c \mathcal{W}_{n}(\eta)  \int_{0}^{\eta} j_{\eps}(\rho) \rho^d \, \ud \rho 
\leq 4 c C_{g_{\eps}}^{-1} \mathcal{W}_{n}(\eta) C_{\phi_{\eps}}.
\end{align}
Further,
\begin{align*}
\frac{1}{2} I_3^{(1)}
&\leq
\left|\int_{0}^{\eta} \int_{-D_{n}\rho^2}^{D_{n}\rho^2} j_{\eps}(\sqrt{\rho^2+h^2}) g_{\eps} 
\left(\frac{(\rho e, h)}{\sqrt{\rho^2+h^2}} \right) \rho^{d-2} \,  \ud h\ud \rho 
- 2D_{n} g_{\eps}(e)\int_0^{\eta} j_{\eps}(\rho) \rho^{d} \ud \rho\right| \\
&\leq \left|\int_{0}^{\eta} \int_{-D_{n}\rho^2}^{D_{n}\rho^2} j_{\eps}(\sqrt{\rho^2+h^2}) 
\left( g_{\eps}\left(\frac{(\rho e, h)}{\sqrt{\rho^2+h^2}} \right) - g_{\eps}(e)\right) \rho^{d-2} \,  \ud h\ud \rho\right| \\
&\qquad+
|g_{\eps}(e)| \left| \int_0^{\eta} \left[ \rho^{d-2} \int_{-D_{n}\rho^2}^{D_{n}\rho^2} j_{\eps}(\sqrt{\rho^2+h^2}) \ud h
- 2D_{n} j_{\eps}(\rho) \rho^{d} \right]\ud \rho\right| \\
&=I_3^{(1,1)} + I_3^{(1,2)}.
\end{align*}
To estimate the first integral we observe that for $0<\rho <\eta$ and $|h| < D_n\rho^2$ it holds
\begin{align*}
 \left|g_{\eps}\left(\frac{\rho e, h}{\sqrt{\rho^2+h^2}} \right) - g_{\eps}(e)\right| 
&\leq \omega_{g_{\eps}} \Big(\sqrt{2-2/\sqrt{1+D_{n}^2\eta^2}}\Big) ,
\end{align*}
where $\omega_F$ stands for the modulus of continuity of a function $F$, i.e. 
$$
\omega_F(\delta) = \sup \{|F(x)-F(y)|: x,y \in \Ss^{d-1} \, \textnormal{and} \, |x-y|\leq \delta\}.
$$
For a function $F$ continuous on $\Ss^{d-1}$ we clearly have $\omega_{F}(\delta) \to 0$ as $\delta\to 0$. 
We obtain
$$
 I_3^{(1,1)} \leq \omega_{g_{\eps}}\Big(\sqrt{2-2/\sqrt{1+D_{n}^2\eta^2}}\Big)
\left|\int_{0}^{\eta} \int_{-D_{n}\rho^2}^{D_{n}\rho^2} j_{\eps}(\sqrt{\rho^2+h^2}) \rho^{d-2} \ud h \ud \rho\right|.
$$
Moreover,
\begin{align*}
\left|\int_{0}^{\eta} \int_{-D_{n}\rho^2}^{D_{n}\rho^2} j_{\eps}(\sqrt{\rho^2+h^2}) \rho^{d-2} \ud h \ud \rho\right| 
&\leq 2D_{n} \int_0^{\eta} j_{\eps}(\rho) \rho^d \ud \rho 
= 2D_{n} \kappa_{d-1}^{-1} C_{j_{\eps}} \lambda_{j_{\eps}} (B_{\eta})\\
&\leq 2D_{n} \kappa_{d-1}^{-1} C_{j_{\eps}}
= 2D_{n}  C_{g_{\eps}}^{-1} C_{\phi_{\eps}},
\end{align*}
and whence
\begin{align}\label{est-I_3^11}
 I_3^{(1,1)}\leq  2D_{n} C_{g_{\eps}}^{-1} \omega_{g_{\eps}}\Big(\sqrt{2-2/\sqrt{1+D_{n}^2\eta^2}}\Big) {C}_{\phi_{\eps}}.
\end{align}
To estimate $I_3^{(1,2)}$ we proceed as follows.
We first observe that the function $h \mapsto j_{\eps}(\sqrt{\rho^2+h^2})$ is even and whence
\begin{multline*}
\left|\rho^{d-2} \int_{-D_{n}\rho^2}^{D_{n}\rho^2} j_{\eps}(\sqrt{\rho^2+h^2}) \, \ud h - 2D_{n}\rho^d j_{\eps}(\rho)\right| \\
= 2\left|\rho^{d-2} \int_0^{|D_{n}|\rho^2} j_{\eps}(\sqrt{\rho^2+h^2}) \, \ud h - |D_{n}|\rho^d j_{\eps}(\rho)\right|.
\end{multline*}
Since the same function as above is also non-increasing, we obtain
\begin{align*}
|D_{n}|\rho^d j_{\eps}(\rho\sqrt{1+D_{n}^2\rho^2}) \leq \rho^{d-2} \int_0^{|D_{n}|\rho^2} j_{\eps}(\sqrt{\rho^2+h^2}) \, \ud h \leq |D_{n}|\rho^d j_{\eps}(\rho).
\end{align*}
This implies
\begin{align*}
\left|\rho^{d-2} \int_0^{|D_{n}|\rho^2} j_{\eps}(\sqrt{\rho^2+h^2})\, \ud h - |D_{n}| \rho^d j_{\eps}(\rho)\right|
&= |D_{n}|\rho^d j_{\eps} (\rho) - \rho^{d-2} \int_0^{|D_{n}|\rho^2} j_{\eps} (\sqrt{\rho^2+h^2})\, \ud h\\
&\leq |D_{n}|\left(\rho^d j_{\eps} (\rho) -\rho^d j_{\eps}(\rho\sqrt{1+D_{n}^2\rho^2})\right).
\end{align*}
Further, by monotonicity of $j_{\eps}$, 
\begin{align*}
 \int_0^{\eta}  \rho^d j_{\eps}(\rho\sqrt{1+D_{n}^2\eta^2}) \, \ud \rho
&= \frac{1}{(\sqrt{1+D_{n}^2\eta^2})^d }\int_0^{\eta\sqrt{1+D_{n}^2\eta^2}} \rho^d j_{\eps}(\rho) \, \ud \rho\\
&\geq  \frac{1}{(\sqrt{1+D_{n}^2\eta^2})^d }\int_0^{\eta} \rho^d j_{\eps}(\rho) \, \ud \rho \geq  \left(1- \frac{d}{2} D_{n}^2\eta^2\right) \int_0^{\eta} \rho^d j_{\eps}(\rho) \,\ud \rho.
\end{align*}
Therefore, for any $\eta<1$,
\begin{align*}
I_3^{(1,2)}
&= 2\left|\int_0^{\eta} \left[ \rho^{d-2} \int_0^{D_{n}\rho^2} j_{\eps}(\sqrt{\rho^2+h^2}) \, \ud h - D_{n} \rho^{d}j_{\eps}(\rho) \right] \ud \rho\right|\\
&\leq 2|D_{n}| \left(\int_0^{\eta} \rho^d j_{\eps}(\rho)\, \ud\rho - \int_0^{\eta} \rho^d j_{\eps}(\rho\sqrt{1+D_{n}^2\rho^2})\,\ud \rho \right)\\
&\leq d |D_{n}|^3 \eta^2 \int_0^{\eta} \rho^d j_{\eps}(\rho) \, \ud \rho \leq  d |D_{n}|^3 \eta^2 C_{\phi_{\eps}},
\end{align*}
and we  conclude that
\begin{align}\label{est-I_3^12}
 I_3^{(1,2)} \leq |g_{\eps}(e)| d \kappa_{d-1}^{-1} |D_{n}|^3 \eta^2 C_{j_{\eps}}  
= C_{g_{\eps}}^{-1} |g_{\eps}(e)| d|D_{n}|^3 \eta^2 {C}_{\phi_{\eps}}.
\end{align}
Combining \eqref{est-J}, \eqref{est-I_1}, \eqref{est-I_2}, \eqref{est-I_3^2}, \eqref{est-I_3^11}  and \eqref{est-I_3^12} yields
\begin{align*}
\left|{C}_{\phi{\eps}}^{-1} K_{\phi_{\eps},e} (E_{n})
- 2D_{n}C_{g_\varepsilon}^{-1}g_{\eps}(e)\right| 
&\leq
4cC_{g_{\eps}}^{-1}\mathcal{W}_{n}(\eta) 
+ 2|D_{n}| g_{\eps}(e) C_{g_{\eps}}^{-1} \lambda_{\phi_{\eps}}(B_{\eta}^c)\\ 
&\quad + 2D_{n}C_{g_{\eps}}^{-1} \omega_{g_{\eps}}\Big(\sqrt{2-2/\sqrt{1+D_{n}^2\eta^2}}\Big) \\ 
&\qquad+C_{g_{\eps}}^{-1}  |g_{\eps}(e)| d|D_{n}|^3 \eta^2
+\pi c C_{g_{\eps}}^{-1}  \frac{1}{\eta} {\lambda}_{\phi_{\eps}}(B_{\eta}^c).
\end{align*}
We note that for $\delta>0$,
$$
\omega_{g_{\eps}}(\delta) \leq \omega_{g} (\delta) + 2 \sup\{|g_{\eps}(x) - g(x)| : x \in \Ss^{d-1}\}.
$$
Since $g_{\eps}$ converges uniformly to $g$, we conclude that for any $\varepsilon_n \downarrow 0$
\begin{multline*}
\limsup_{n \to \infty}  \left| C_{\phi_{\eps_n}}^{-1} K_{\phi_{\eps_n},e} (E_{n})
-2D_{n}C_{g_{\eps_n}}^{-1}g_{\eps_n}(e) \right|\\
\leq
\limsup_{n\to \infty}\left( 
4cC_{g_{\eps}}^{-1} \mathcal{W}_{n}(\eta) 
+ C_g^{-1} \kappa_{d-1} \omega_g\Big(\sqrt{2-2/\sqrt{1+D_{n}^2\eta^2}}\Big)
+C_g^{-1} \kappa_{d-1} |g(e)| d|D_{n}|^3 \eta^2\right).
\end{multline*}
Since $\eta>0$ can be taken arbitrarily small and the function $g$ is continuous, we arrive at
$$
\lim_{n\to \infty} \left|{C}_{\phi_{\eps_n}}^{-1} K_{\phi_{\eps_n},e} (E_{n})
-2D_{n}C_{g_{\eps_n}}^{-1}g_{\eps_n}(e)\right| =0.
$$
Since 
$
\lim_{\eps \da 0} C_{g_\varepsilon}^{-1}g_{\eps}(e)= C_g^{-1}g(e),
$
and
$\lim_{n\to \infty} D_{n} = D$,
we finally obtain
\begin{align}\label{K-conv-unif}
\lim_{n\to \infty} {C}_{\phi_{\eps_n}}^{-1} K_{\phi_{\eps_n},e} (E_{n}) = 2DC_g^{-1}g(e)
= C_g^{-1}g(e) K_e.
\end{align}
We  observe that the convergence in \eqref{K-conv-unif} is uniform 
over $\Ss^{d-2}$ and this allows us to deduce 
\begin{align*}
\lim_{n\to \infty} {C}_{\phi_{\eps_n}}^{-1} H_{\phi_{\eps_n}}(E_{n})
&=
 \lim_{n\to \infty} {C}_{\phi_{\eps_n}}^{-1} \frac{1}{\kappa_{d-2}} \int_{\Ss^{d-2}} K_{\phi_{\eps_n},e} (E_{n}) \calH^{d-2} (\ud e) \\
 &= \frac{1}{C_g \kappa_{d-2}} \int_{\Ss^{d-2}} g(e) K_e \calH^{d-2} (\ud e),
\end{align*}
and the proof is complete.
\end{proof}

In the corollary below we indicate the classical directional and mean curvatures in the limit in the case when $\phi=j$. In this case we can use definition \eqref{eq:dir} for the directional curvature. 

\begin{corollary}\label{cor-j-profiles}
Let $\{j_{\eps}\}_{\eps>0}$ be a family of radial and radially non-increasing functions such that
$$
\int_{\Rd} \left(1\wedge |x|\right) j_{\eps} (x) \ud x < \infty.
$$
Let
$$
\lambda_{\eps} (\ud x) = C_{\eps}^{-1} (1\wedge |x|) \, j_{\eps} (x) \ud x,
$$
where $C_{\eps} = \int_{\Rd} \left(1 \wedge |x|\right) j_{\eps} ( x) \ud x$.
Suppose that
\begin{equation}\label{tight_at_zero}
\lim_{\eps\da 0} \lambda_{\eps} (B_R^c) = 0, \qquad \textnormal{for any} \,\, R>0.
\end{equation}
Let $\varepsilon_n \downarrow 0$ and $E_{n} \in \Reg$ be such that $E_{n} \to E$ in $\Reg$ for some $E$ in $\Reg$. Then, for every $x\in\partial E \cap \partial E_{n}$,
\[
\lim_{n\to \infty} C_{\phi{\eps_n}}^{-1} \Ku_{j_{\eps_n},e} (x,E_n) = \frac{1}{\kappa_{d-1}}  K_{e}(x,E), \quad e\in \Ss^{d-1}(x) \cap \nu(x)^{\perp},
\]
and
\[
\lim_{n\to \infty} C_{{\eps_n}}^{-1}  H_{j_{\eps_n}} (x,E_n)= \frac{1}{\kappa_{d-1}} H(x,E).
\]
\end{corollary}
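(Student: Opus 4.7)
\textbf{Sketch of proof plan for Corollary~\ref{cor-j-profiles}.}

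The plan is to deduce this corollary from Theorem~\ref{thm2} specialised to $g_\varepsilon \equiv 1$ (so that $\phi_\varepsilon = j_\varepsilon$ and $C_g = \kappa_{d-1}$), combined with a direct estimate showing that the asymmetric part of the one-sided directional curvatures $\Ku_{j_{\eps_n}, \pm e}$ is negligible after normalization. The mean curvature statement is then immediate: with $g \equiv 1$ the limit in \eqref{conv-mean-nonlocal-to classical} reduces to
\[
\frac{1}{\kappa_{d-1}\,\kappa_{d-2}}\int_{\Ss^{d-1}(x)\cap \nu(x)^\perp} K_e(x,E)\, \calH^{d-2}(\ud e),
\]
which equals $\kappa_{d-1}^{-1} H(x,E)$ by the classical identity \eqref{mean-curv-through-direct}.

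For the one-sided directional statement, Theorem~\ref{thm2} applied to the symmetrised curvature $K_{j_\varepsilon, e}$ already yields
\[
\lim_{n\to\infty} C_{\eps_n}^{-1}\, K_{j_{\eps_n}, e}(x, E_n) \;=\; \kappa_{d-1}^{-1}\, K_e(x, E).
\]
Since $K_{j_\varepsilon, e} = \tfrac{1}{2}\bigl(\Ku_{j_\varepsilon, e} + \Ku_{j_\varepsilon, -e}\bigr)$, it suffices to prove that the asymmetric difference $\Delta_n(e) := \Ku_{j_{\eps_n}, e}(x, E_n) - \Ku_{j_{\eps_n}, -e}(x, E_n)$ satisfies $\lim_n C_{\eps_n}^{-1}\, \Delta_n(e) = 0$. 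Working in normal coordinates at $x=0$ as at the start of Section~\ref{sec:Prem}, formula~\eqref{eq:k_sg} rewrites this difference as
\[
\Delta_n(e) \;=\; -2 \int_0^\infty \int_{f_n(-\rho e)}^{f_n(\rho e)} j_{\eps_n}\!\left(\sqrt{\rho^2+h^2}\right) \rho^{d-2}\, \ud h\, \ud \rho.
\]

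Since $E_n \to E$ in $\Reg$ with $f_n(0) = 0$ and $\nabla f_n(0) = 0$, the pure quadratic terms of the second-order Taylor expansions of $f_n$ coincide at $\rho e$ and $-\rho e$, giving $|f_n(\rho e) - f_n(-\rho e)| \leq 2\, \mathcal{W}_n(\eta)\, \rho^2$ for $0 < \rho < \eta$, with $\mathcal{W}_n(\eta)\to 0$ as $\eta \downarrow 0$ uniformly in $n$ (exactly as in the proof of Theorem~\ref{thm2}). Splitting the outer $\rho$-integral at a small $\eta \in (0,1)$, applying this Taylor bound combined with the monotonicity $j_{\eps_n}(\sqrt{\rho^2+h^2}) \leq j_{\eps_n}(\rho)$ on the near part, and invoking the crude tail estimate analogous to \eqref{est-J} (the $f_n$ are uniformly bounded on a fixed $B_\delta'$), one arrives at
\[
C_{\eps_n}^{-1}\, |\Delta_n(e)| \;\leq\; c\, \mathcal{W}_n(\eta) \;+\; c\, \eta^{-1}\, \lambda_{\eps_n}(B_\eta^c),
\]
with a constant $c$ depending only on $\sup_n \|D^2 f_n\|_\infty$ and $\delta$. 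Sending $n\to\infty$ first and then $\eta \downarrow 0$, and invoking the concentration hypothesis \eqref{tight_at_zero}, yields $\lim_n C_{\eps_n}^{-1}\, \Delta_n(e) = 0$, so each $\Ku_{j_{\eps_n}, \pm e}(x, E_n)$ shares the common limit $\kappa_{d-1}^{-1}\, K_e(x, E)$. The main subtlety is ensuring these estimates are uniform in the direction $e \in \Ss^{d-2}$, but this is straightforward given the compactness of the sphere and the continuous dependence of $\mathcal{W}_n$ on $e$.
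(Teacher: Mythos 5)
Your proposal is correct and follows essentially the route the paper intends: Corollary \ref{cor-j-profiles} is stated as a direct specialization of Theorem \ref{thm2} to $g_\eps\equiv 1$ (so $C_g=\kappa_{d-1}$), with the mean-curvature limit reducing to $\kappa_{d-1}^{-1}H(x,E)$ via \eqref{mean-curv-through-direct}. Your extra step — showing the antisymmetric part $\Ku_{j_{\eps_n},e}-\Ku_{j_{\eps_n},-e}$ is $o(C_{\eps_n})$ so that the one-sided curvature inherits the limit of the symmetrized one — is exactly the content of the estimate for the term $I_1$ in the proof of Theorem \ref{thm2} (bound \eqref{est-I_1} together with the tail bound \eqref{est-J}), and you handle it correctly.
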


The following result
allows us to conclude the validity of the approximation of the classical curvatures by families of kernels that are perturbed by another admissible kernels, cf.\ Example \ref{ex10}.
\begin{proposition}\label{prop_nu}
Let $\{\phi_{\eps}\}_{\eps>0}$ satisfy the assumptions of Theorem \ref{thm2} and $\{\overline{\phi}_{\eps}\}_{\eps>0}$ are such that
$$
\phi_{\eps}(x)+\overline{\phi}_{\eps}(x) \geq 0, \quad x\in \Rd
$$
and
$$
\overline{\phi}_{\eps}(x) = \overline{\phi}_{\eps} (-x) \quad \textnormal{and} \quad |\overline{\phi}_{\eps}(x)| \leq \overline{j}_{\eps}(x), \quad x\in\Rd,
$$
for some family $\{\overline{j}_{\eps}\}_{\eps>0}$ of radial, radially non-increasing and non-negative functions satisfying
\begin{equation}
\int_{\Rd} (1 \wedge |x|) \, \overline{j}_{\eps}(x) \, \ud x < \infty.
\end{equation}
Suppose that
\begin{equation}\label{assumption}
\lim_{\eps \da 0} C_{\phi_{\eps}}^{-1} \int_{\Rd} (1\wedge |x|) \overline{j}_{\eps}(x) \ud x = 0.
\end{equation}
Let $\varepsilon_n \downarrow 0$ and let $E_n \in \Reg$ be such that $E_n \to E$ in $\Reg$ for some $E\in \Reg$. Then, for every $x\in\partial E \cap \partial E_n$,
$$
\lim_{n\to\infty}  C_{\phi_{\eps_{n}}}^{-1} K_{\phi_{\eps_n}+\overline{\phi}_{\eps_n},e}(x,E_n) = C_g^{-1}g(e) K_e(x,E)
$$
and
$$
\lim_{n \to \infty} C_{\phi_{\eps_{n}}}^{-1} H_{\phi_{\eps_n}+\overline{\phi}_{\eps_n}}(x,E_n) = \frac{1}{C_g \kappa_{d-2}} \int_{\Ss^{d-2}} g(e) K_e \calH^{d-2} (\ud e).
$$
\end{proposition}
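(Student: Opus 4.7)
The plan is to exploit the fact that both $K_{\phi,e}$ and $H_\phi$ depend \emph{linearly} on the kernel $\phi$, provided all integrals exist in the principal value sense. First, I observe that Proposition~\ref{curv-well-defined} applies to $\overline{\phi}_\eps$ on its own: its proof uses only the symmetry $\overline{\phi}_\eps(-x)=\overline{\phi}_\eps(x)$ together with the pointwise bound $|\overline{\phi}_\eps|\le\overline{j}_\eps$ and the L\'evy-type integrability of $\overline{j}_\eps$ (with $\beta=1$). Hence $K_{\overline{\phi}_\eps,e}(x,E_n)$ and $H_{\overline{\phi}_\eps}(x,E_n)$ are well defined, and by linearity
\begin{align*}
K_{\phi_\eps+\overline{\phi}_\eps,e}(x,E_n) &= K_{\phi_\eps,e}(x,E_n) + K_{\overline{\phi}_\eps,e}(x,E_n),\\
H_{\phi_\eps+\overline{\phi}_\eps}(x,E_n) &= H_{\phi_\eps}(x,E_n) + H_{\overline{\phi}_\eps}(x,E_n).
\end{align*}
The main terms are governed by Theorem~\ref{thm2}; thus it suffices to prove that after normalisation by $C_{\phi_{\eps_n}}^{-1}$ the perturbation terms go to zero.

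The second step is to revisit the estimates in the proof of Proposition~\ref{curv-well-defined} with $\beta=1$ to extract an explicit bound. Working in the normal coordinates at $x$ and choosing the cutoff radius $R=1$, the argument there already shows that the contribution from $B_1\cap\pi(e)$ to $|K_{\overline{\phi}_\eps,e}(x,E_n)|$ is controlled by $\frac{4M_n}{\kappa_{d-1}}\int_{B_1}|y|\,\overline{j}_\eps(y)\,\ud y$, where $M_n$ is the local $\calC^{1,1}$ constant of $\partial E_n$ at $x$. The remaining part, over $B_1^c\cap\pi(e)$, is controlled via polar coordinates on the two-plane by a constant multiple of $\int_{B_1^c}\overline{j}_\eps(y)\,\ud y$. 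Combining these with $1\wedge|y|=|y|$ on $B_1$ and $1\wedge|y|=1$ on $B_1^c$ yields
\[
|K_{\overline{\phi}_\eps,e}(x,E_n)|\;\le\;C(M_n)\int_{\Rd}(1\wedge|y|)\,\overline{j}_\eps(y)\,\ud y,
\]
with an entirely analogous estimate for $|H_{\overline{\phi}_\eps}(x,E_n)|$. Since $E_n\to E$ in $\Reg$ with $x\in\partial E\cap\partial E_n$, the $\calC^2$-convergence of the defining diffeomorphisms forces $\sup_n M_n<\infty$, so the bound is uniform in $n$ \emph{and} in $e\in\Ss^{d-2}$. Dividing by $C_{\phi_{\eps_n}}$ and applying assumption~\eqref{assumption} shows that the perturbation contribution vanishes in the limit.

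Combining this with Theorem~\ref{thm2} gives immediately
\[
\lim_{n\to\infty}C_{\phi_{\eps_n}}^{-1}K_{\phi_{\eps_n}+\overline{\phi}_{\eps_n},e}(x,E_n)\;=\;C_g^{-1}g(e)K_e(x,E),
\]
and since both the vanishing of the perturbation and the convergence in Theorem~\ref{thm2} (see \eqref{K-conv-unif}) are uniform in $e\in\Ss^{d-2}$, Theorem~\ref{thm:formula} together with dominated convergence (in $e$, against the bounded dominating function guaranteed by the uniform estimate above) yields the corresponding mean-curvature statement by integrating over $\Ss^{d-2}$.

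The argument is conceptually routine once linearity is recognised; the only mild obstacle is the bookkeeping in the second paragraph, namely verifying that the constant $C(M_n)$ arising from Proposition~\ref{curv-well-defined} can really be taken uniform in $n$ from the $\calC^2$-convergence $E_n\to E$, and that the far-field piece of the directional curvature (outside the unit ball) indeed fits inside the weight $(1\wedge|y|)\overline{j}_\eps(y)$ rather than producing an extra uncontrolled term.
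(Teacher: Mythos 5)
Your proposal is correct and follows essentially the same route as the paper: the paper likewise bounds the perturbation term by $c_n\int_{\Rd}(1\wedge|y|)\,\overline{j}_{\eps}(y)\,\ud y$ using the graph estimates from Proposition \ref{curv-well-defined}, notes that $c_n$ is uniform in $n$ thanks to the $\calC^2$-convergence $E_n\to E$, and then concludes via hypothesis \eqref{assumption}, Theorem \ref{thm:formula} and Theorem \ref{thm2}. The only cosmetic difference is that you split off $K_{\overline{\phi}_{\eps},e}$ as a separately well-defined principal value (which requires replacing monotone by dominated convergence in the argument of Proposition \ref{curv-well-defined}, since $\overline{\phi}_{\eps}$ may be signed), whereas the paper estimates the difference $K_{\phi_{\eps}+\overline{\phi}_{\eps},e}-K_{\phi_{\eps},e}$ directly.
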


\begin{proof}
Proceeding similarly as in the proof of Proposition \ref{curv-well-defined}, according to \eqref{eq:1} we can define $M_n= M(E_n)$ and $R_n=R(E_n)$ and we infer the following estimate
\begin{align*}
|K_{\phi_{\eps}+\overline{\phi}_{\eps},e}(E_n) - K_{\phi_{\eps},e}( E_n)| 
&\leq \frac{8M}{\kappa_{d-1}} \int_{B_R} |x|  \overline{j}_{\eps}(y) \ud y + \int_{B_R^c} \overline{j}_{\eps}(y) \ud y \\
&\leq c_n \int_{\Rd}(1\wedge |y|) \overline{j}_{\eps}(y) \ud y,
\end{align*}
where $c_n=2 \kappa_{d-1}^{-1} \left(4M_n(1\vee R_n) \vee \pi(1 \vee R_n^{-1})\right)$. One can show that constants $c_n$ are bounded from above by a positive constant $C$ which depends only on the set $E$ and $\max_{n\in \mathbb{N}} \|\Phi_n-\rm{Id}\|$. By \eqref{assumption} we conclude that for any $\varepsilon_n \downarrow 0$, 
$$
\frac{1}{C_{\phi_{\eps_n}}}\left|K_{\phi_{\eps_n}+\overline{\phi}_{\eps_n},e}(E_n) - K_{\phi_{\eps_n},e}(E_n)\right|  \leq \frac{C}{C_{\phi_{\eps_n}}} \int_{\Rd} (1\wedge |y|) \overline{j}_{\eps_n}(y) \ud y \stackrel{n\to \infty}{\longrightarrow} 0.
$$
By Theorem \ref{thm:formula},
$$
\frac{1}{C_{\phi_{\eps_n}}}\left|H_{\phi_{\eps_n}+\overline{\phi}_{\eps_n}}(E_n) - H_{\phi_{\eps_n}}(E_n)\right| 
\leq\frac{C}{C_{\phi_{\eps_n}}} \int_{\Rd} (1\wedge |y|) \overline{j}_{\eps_n}(y) \ud y\stackrel{n\to \infty}{\longrightarrow} 0.
$$
Theorem \ref{thm2} yields the result. 
\end{proof}

\subsection{Stability of mean curvature flows}
Let $E_n\in \Reg$ be such that $E_n\to E\in \Reg$. In view of Theorem \ref{thm2}  the curvatures $C_{\phi_{\varepsilon_n}}^{-1}H_{\phi_{\eps_n}}$ converge 
in the sense of \cite[Definition 3.1]{Cesaroni-Stability} to the nonlocal curvature given by the right-hand side of \eqref{conv-mean-nonlocal-to classical} which evidently satisfies condition (C'). 
In order to establish stability of the corresponding viscosity solutions to \eqref{levelsetf} we shall investigate the following condition (UB). 
Let $\underline{c}^n$ and $\overline{c}^n$
be defined as in  \eqref{defbarc0} and \eqref{defbarc} where $H_\phi$ is replaced with $H_{\phi_{\eps_n}}$. We set 
\begin{equation}\label{defucinfty}
\underline{c}^{\inf} (\rho):= \inf_{n\in\mathbb{N}}  \underline{c}^n (\rho), \qquad \overline{c}^{\sup} (\rho):= \sup_{n\in\mathbb{N}}  \overline{c}^n (\rho). 
\end{equation}  
\begin{itemize}
\item[(UB)] There exists $K\ge 0$ such that
$\underline{c}^{\inf} (\rho) \ge -K\rho$, for all $\rho>1$, and  
$\overline{c}^{\sup} (\rho)<+\infty$, for all $\rho>0$.
\end{itemize}

\begin{lemma}\label{lem:UB}
In the notation of Theorem \ref{thm2}, 
the curvatures $(C_{\phi_{\eps_n}}^{-1}H_{\phi_{\eps_n}})_{n\in\mathbb{N}}$ satisfy (UB).
\end{lemma}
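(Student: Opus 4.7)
The plan is to use property (S) to collapse the two-sided definitions of $\overline c^n(\rho)$ and $\underline c^n(\rho)$ into extrema of a single quantity, and then to bound that quantity from above uniformly in $n$ and from below by $0$. Indeed, (S) applied to $E=\overline B_\rho$ gives $-H_{\phi_{\eps_n}}(x,\R^d\setminus B_\rho)=H_{\phi_{\eps_n}}(x,\overline B_\rho)$ for every $x\in\partial B_\rho$, so
\[
\overline c^n(\rho) = \max_{x\in\partial B_\rho} C_{\phi_{\eps_n}}^{-1} H_{\phi_{\eps_n}}(x,\overline B_\rho), \qquad \underline c^n(\rho) = \min_{x\in\partial B_\rho} C_{\phi_{\eps_n}}^{-1} H_{\phi_{\eps_n}}(x,\overline B_\rho).
\]

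For the lower bound I would exploit that $\overline B_\rho$ is contained in the tangent half-space $H_x^-=\{y:(y-x)\cdot\nu(x)\le 0\}$ at every $x\in\partial B_\rho$, which gives $\widetilde\chi_{\overline B_\rho}\ge \widetilde\chi_{H_x^-}$ pointwise. After translating $x$ to the origin, the symmetry $\phi_{\eps_n}(-z)=\phi_{\eps_n}(z)$ combined with $\widetilde\chi_{H_x^-\!-x}(-z)=-\widetilde\chi_{H_x^-\!-x}(z)$ forces $\int_{B_r(x)^c}\widetilde\chi_{H_x^-}(y)\phi_{\eps_n}(x-y)\,\ud y=0$ for every $r>0$; since $\phi_{\eps_n}$ is integrable away from the origin (as in Section \ref{sec:Prem}), passing to the limit $r\da 0$ yields $H_{\phi_{\eps_n}}(x,\overline B_\rho)\ge 0$ and hence $\underline c^{\inf}(\rho)\ge 0\ge -K\rho$ for any $K\ge 0$.

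For the upper bound I would revisit the estimates from the proof of Proposition \ref{curv-well-defined} with $\beta=1$. In normal coordinates at $x\in\partial B_\rho$ the set $\overline B_\rho$ is locally the subgraph of $f(y')=-\rho+\sqrt{\rho^2-|y'|^2}$, which satisfies $|f(y')|\le |y'|^2/\rho$ on $\{|y'|\le \rho/2\}$, so the constants $M=1/\rho$ and $R=\min(1,\rho/2)$ in that proof may be chosen uniformly in $x\in\partial B_\rho$. Feeding those bounds, together with $\phi_{\eps_n}\le c_g\, j_{\eps_n}$ where $c_g:=\sup_n\|g_{\eps_n}\|_\infty<\infty$ (finite by the uniform convergence $g_{\eps_n}\to g$) and the elementary inequality $j_{\eps_n}(y)\le R^{-1}(1\wedge |y|)j_{\eps_n}(y)$ valid on $B_R^c$, one obtains
\[
|H_{\phi_{\eps_n}}(x,\overline B_\rho)| \le C_\rho\, C_{j_{\eps_n}}
\]
with $C_\rho$ independent of $x\in\partial B_\rho$ and of $n$. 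Dividing by $C_{\phi_{\eps_n}}=\kappa_{d-1}^{-1}C_{j_{\eps_n}}C_{g_{\eps_n}}$ and using $C_{g_{\eps_n}}\to C_g>0$ keeps $1/C_{g_{\eps_n}}$ uniformly bounded, so $\overline c^{\sup}(\rho)<\infty$.

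The only delicate point will be the uniformity of the upper bound: one must cleanly separate the near-singular contribution (controlled via the quadratic graph estimate and $\int_{B_R}|y|j_{\eps_n}(y)\,\ud y\le C_{j_{\eps_n}}$) from the tail (controlled via the monotonicity of $j_{\eps_n}$ together with the inequality above), so that both pieces scale like $C_{j_{\eps_n}}$ with an $n$-independent constant; once this is in place, the positivity of $C_g$ immediately closes the argument.
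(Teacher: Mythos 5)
Your argument is correct, and for the lower bound it coincides with the paper's (the paper simply asserts that the $\phi$-curvature of a closed ball is nonnegative; your comparison with the tangent half-space, whose curvature vanishes by the odd/even symmetry, is the standard justification, and it is legitimate because the half-space integral over $B_r(x)^c$ is absolutely convergent). For the upper bound, however, you take a genuinely different route. The paper splits $H_{\phi_{\eps_n}}(x,\overline B_\rho)$ into a far part over $B_\eta^c$ and a near part, rewrites the near principal value via Lemma \ref{lem44} as an absolutely convergent integral over $B_\eta\setminus\big(B_\rho(-x)\cup B_\rho(x)\big)$, and then estimates both pieces directly against the \emph{probability} measure $\lambda_{\phi_{\eps_n}}$, obtaining the clean $n$-independent bound $\eta^{-1}+2/\rho+c\rho$ without ever separating the angular and radial parts of the kernel. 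You instead recycle the subgraph estimates of Proposition \ref{curv-well-defined} with the explicit constants $M=1/\rho$, $R=\min(1,\rho/2)$ for the sphere, dominate $\phi_{\eps_n}$ by $c_g\,j_{\eps_n}$, and only at the end divide by $C_{\phi_{\eps_n}}=\kappa_{d-1}^{-1}C_{j_{\eps_n}}C_{g_{\eps_n}}$. This works, but it buys the bound at the price of two extra facts: $\sup_n\|g_{\eps_n}\|_\infty<\infty$ (fine, by uniform convergence) and, more importantly, $\inf_n C_{g_{\eps_n}}>0$. The latter holds only because $C_{g_{\eps_n}}\to C_g$ and $C_g>0$ is implicit in the statement of Theorem \ref{thm2} (it appears as $C_g^{-1}$ in the limit); you should state this explicitly, since if $C_{g_{\eps_n}}\to 0$ your quotient $C_{j_{\eps_n}}/C_{\phi_{\eps_n}}$ would blow up while the paper's $\lambda_{\phi_{\eps_n}}$-based bound would survive. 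Apart from making that hypothesis visible, no step of your argument fails.
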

\begin{proof}
We first remark that the $\phi$-curvature of any closed ball is always positive, so evidently $\underline{c}^{\inf} (\rho) \geq 0$.
Further, 
\begin{align*}
H_{\phi_{\eps_n}}(x,E) = H_{\phi_{\eps_n}}(0,E-x). 
\end{align*}
Hence, for any $\eta\in(0,1)$,
\begin{align*}
H_{\phi_{\eps_n}}(x,\overline{B}_\rho) 
= H_{\phi_{\eps_n}} (0, \overline{B}_\rho(-x))
&= \lim_{r\to 0^+} \int_{B_r^c} \widetilde{\chi}_{\overline{B}_\rho(-x)} (y) \phi_{\eps_n}(y) \ud y \\
&= \lim_{r\to 0^+} \left(\int_{B_{\eta}^c} + \int_{B_{\eta}\setminus B_{r}}\right) \widetilde{\chi}_{\overline{B}_\rho(-x)} (y) \phi_{\eps_n}(y) \ud y.
\end{align*}
We note that the interior and exterior tangent balls to $\overline{B}_\rho(-x)$ at $0$ are ${B}_\rho(-x)$ and ${B}_\rho(x)$, respectively. Hence, by Lemma \ref{lem44},
\begin{align*}
H_{\phi_{\eps_n}}(x,\overline{B}_\rho) 
&= \int_{B_{\eta}^c} \widetilde{\chi}_{\overline{B}_\rho(-x)} (y) \phi_{\eps_n}(y) \ud y 
+ \int_{B_{\eta}\setminus ({B}_\rho(-x) \cup {B}_\rho(x))} 
\widetilde{\chi}_{\overline{B}_\rho(-x)} (y) \phi_{\eps_n}(y) \ud y\\
&= \int_{B_{\eta}^c} \widetilde{\chi}_{\overline{B}_\rho(-x)} (y) \phi_{\eps_n}(y) \ud y 
+ \int_{B_{\eta}\setminus ({B}_\rho(-x) \cup {B}_\rho(x))} \phi_{\eps_n}(y) \ud y 
={\rm I}_1 + {\rm I}_2.
\end{align*}
We have
\begin{align*}
C_{\phi_{\eps_n}}^{-1} |{\rm I}_1| \leq C_{\phi_{\eps_n}}^{-1} \int_{B_{\eta}^c} \phi_{\eps_n}(y) \ud y \leq \eta^{-1} C_{\phi_{\eps_n}}^{-1} \int_{B_{\eta}^c} (1\wedge |y|) \phi_{\eps_n}(y) \ud y = \eta^{-1} \lambda_{\phi_{\eps_n}}(B_{\eta}^c), 
\end{align*}
which, by our assumptions, tends to $0$ as $n\to\infty$. Moreover, similarly as in the proof of Lemma \ref{lem43} we obtain 
\begin{align*}
C_{\phi_{\eps_n}}^{-1} 
{\rm I}_2 
&= C_{\phi_{\eps_n}}^{-1} 
\int_{B_{\eta}\setminus (B_{\rho}(-x) \cup B_{\rho}(x))} \phi_{\eps_n}(y) \ud y\\
&\leq C_{\phi_{\eps_n}}^{-1} 
\left( 
\int_{B_{\eta}\setminus B_{\rho/2}} \phi_{\eps_n}(y) \ud y 
+ c \rho \int_{B_{ \eta \wedge \frac{\rho}{2}}}(1\wedge |y|) \phi_{\eps_n}(y) \ud y 
\right) \\
&\leq C_{\phi_{\eps_n}}^{-1} 
\left( 
\frac{2}{\rho} \int_{B_{\eta}\setminus B_{\rho/2}} |y| \phi_{\eps_n}(y) \ud y
+ c\rho \int_{B_{ \eta \wedge \frac{\rho}{2}}} |y|\phi_{\eps_n}(y) \ud y \right) \\
&= 
\frac{2}{\rho}\lambda_{\phi_{\eps_n}}(B_{\eta}\setminus B_{\rho/2}) +  c \rho \lambda_{\phi_{\eps_n}}(B_{ \eta \wedge \frac{\rho}{2}}) 
\leq 
\frac{2}{\rho}
+ c\rho
\end{align*}
and the proof is finished.
\end{proof}

With Lemma \ref{lem:UB} at hand we are ready to formulate the following stability result. 

\begin{theorem}\label{thm:stability}
Let $\varepsilon_n \downarrow 0$ and let $u_0$ be a given uniform continuous function which is constant on the complement of a compact set. Let $u_n$ be the unique viscosity solution to \eqref{levelsetf} where $H_\phi$ is replaced with $H_{\phi_{\varepsilon_n}}$. Let $v_n(x,t)= u_n(x,C_{\varepsilon_n}^{-1}t)$, for any $x\in \R^d$ and $t>0$. Then $v_n\to v$ locally uniformly, where $v$ is the unique viscosity solution to \eqref{levelsetf}, where $H_\phi$ is replaced with the right-hand side of \eqref{conv-mean-nonlocal-to classical}.
\end{theorem}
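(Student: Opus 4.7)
The plan is to apply a general stability result for geometric flows from \cite{Cesaroni-Stability} (in the spirit of their Theorem 3.3): if a sequence of nonlocal curvatures converges, in the sense of their \cite[Definition 3.1]{Cesaroni-Stability}, to a limit curvature, and all of them satisfy the axioms (M), (T), (C') together with the uniform bound condition (UB), then the corresponding level-set viscosity solutions converge locally uniformly to the unique viscosity solution of the limit equation. The task reduces to verifying the hypotheses of this theorem for the rescaled sequence $C_{\varepsilon_n}^{-1} H_{\phi_{\varepsilon_n}}$.

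First I would rewrite the equation satisfied by $v_n$. Since $v_n(x,t) = u_n(x, C_{\varepsilon_n}^{-1} t)$, the chain rule yields
\[
\partial_t v_n(x,t) + |\nabla v_n(x,t)| \, C_{\varepsilon_n}^{-1} H_{\phi_{\varepsilon_n}}\bigl(x, \{y : v_n(y,t) \geq v_n(x,t)\}\bigr) = 0,
\]
with initial datum $v_n(\cdot,0) = u_0$. Hence $v_n$ is the unique viscosity solution to the level-set flow driven by the rescaled curvature $C_{\varepsilon_n}^{-1} H_{\phi_{\varepsilon_n}}$, which by Theorem \ref{Thm:viscosity} exists and is unique.

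Next I would check the required hypotheses for the rescaled family. Each $C_{\varepsilon_n}^{-1} H_{\phi_{\varepsilon_n}}$ inherits (M), (T), and (C') from Proposition \ref{axioms}, since multiplication by a positive constant preserves these properties (with the same modulus of continuity scaled accordingly). The uniform bound (UB) is exactly the content of Lemma \ref{lem:UB}. The convergence of the curvatures in the sense of \cite[Definition 3.1]{Cesaroni-Stability} is provided by the main approximation result Theorem \ref{thm2}: for any $E_n \to E$ in $\Reg$ and $x \in \partial E \cap \partial E_n$, we have $C_{\varepsilon_n}^{-1} H_{\phi_{\varepsilon_n}}(x, E_n)$ converging to the right-hand side of \eqref{conv-mean-nonlocal-to classical}, i.e.\ the classical weighted mean curvature of $E$ at $x$, which is known to satisfy (M), (T), (C').

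The main technical obstacle, and the point I would scrutinize most carefully, is to confirm that the pointwise convergence in Theorem \ref{thm2} matches precisely the convergence mode required in \cite[Definition 3.1]{Cesaroni-Stability}, which may demand uniformity of convergence over suitable compact families of sets and points. This is expected to follow by combining Theorem \ref{thm2} with the uniform continuity (C') of each $H_{\phi_{\varepsilon_n}}$ and a standard diagonal extraction to upgrade convergence at a fixed boundary point to convergence along moving boundary points. Once this uniform convergence is in place, together with (UB) from Lemma \ref{lem:UB}, the abstract stability machinery of \cite{Cesaroni-Stability} applies and yields local uniform convergence $v_n \to v$, where $v$ is the unique viscosity solution to the limit level-set equation driven by the classical (weighted) mean curvature.
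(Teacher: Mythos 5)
Your proposal matches the paper's argument: the theorem is obtained by invoking the abstract stability machinery of \cite{Cesaroni-Stability} once the rescaled curvatures $C_{\varepsilon_n}^{-1}H_{\phi_{\varepsilon_n}}$ are shown to converge in the sense of \cite[Definition 3.1]{Cesaroni-Stability} (Theorem \ref{thm2}), the limit curvature satisfies (C'), and (UB) holds (Lemma \ref{lem:UB}). The convergence-mode issue you flag is not actually an obstacle, since Theorem \ref{thm2} is already formulated for arbitrary $E_n\to E$ in $\Reg$ and $x\in\partial E\cap\partial E_n$, which is precisely the mode of convergence required by \cite[Definition 3.1]{Cesaroni-Stability}.
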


\subsection{Convergence towards the spherical measure}
In this paragraph we study the case when the tails of probability measures given at \eqref{meas-lamb} concentrate at infinity. 

\begin{theorem}\label{thm3}
Let $\{\phi_{\eps}\}_{\eps>0}$ be a family of functions such that
$$
\phi_{\eps}(x)=\phi_{\eps}(-x) \quad \textnormal{and} \quad \phi_{\eps}(x) \leq j_{\eps} (x), \quad x\in\Rd,
$$
where $j_{\eps}\colon \Rd \to [0,\infty)$ are given radial and radially non-increasing functions such that for every $\eps>0$,
$$
\int_{\Rd} (1 \wedge |x|) \, j_{\eps}(x) \, \ud x < \infty.
$$
Suppose that for any $R>0$
\begin{align}\label{cond-Lambda-one}
\lim_{\varepsilon \downarrow 0}\lambda_{\phi_\varepsilon} (B_R^c) = 1.
\end{align}
Let $\eps_n \downarrow 0$ and $E_{n} \in \Reg$ be such that $E_{n} \to E$ in $\Reg$ for some $E \in \Reg$. If $0
\in \partial E \cap \partial E_{n}$ then
\begin{align*}
\lim_{n\to \infty}C_{\phi_{\eps_n}}^{-1}  H_{\phi_{\eps_n}} (E_{n})= \kappa_{d-2}^{-1}.
\end{align*}
\end{theorem}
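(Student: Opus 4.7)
The plan is to split the principal-value integral defining $H_{\phi_{\eps_n}}(E_n)$ into a near-origin piece on $B_R$ and a far-field piece on $B_R^c$ for a fixed, sufficiently large $R>1$. We may assume that $E$ is bounded (the case $E^c$ bounded is analogous by the antisymmetry arising from the $\widetilde{\chi}$ convention). Since $E_n=\Phi_n(E)$ with $\Phi_n\to\mathrm{Id}$ in $\calC^2$, there exists $R>1$ with $E_n\subset B_R$ for every large $n$, and for $|y|\geq R$ one has $\widetilde{\chi}_{E_n}(y)=1$. Consequently,
\begin{equation*}
\kappa_{d-2}\, H_{\phi_{\eps_n}}(E_n) \;=\; \lim_{r\da 0}\int_{B_R\setminus B_r}\widetilde{\chi}_{E_n}(y)\,\phi_{\eps_n}(y)\,\ud y \;+\; \int_{B_R^c}\phi_{\eps_n}(y)\,\ud y.
\end{equation*}
Dividing through by $C_{\phi_{\eps_n}}$, the second summand equals $\lambda_{\phi_{\eps_n}}(B_R^c)$ (using $R>1$ so that $1\wedge|y|=1$ on $B_R^c$), which tends to $1$ by hypothesis \eqref{cond-Lambda-one}. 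The entire task is therefore to show that the first summand, once renormalized by $C_{\phi_{\eps_n}}^{-1}$, is negligible.

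For that, I would reuse the cancellation argument from the proof of Proposition~\ref{curv-well-defined} with $\beta=1$. Because $\phi_{\eps_n}$ is even and $\partial E_n$ admits tangent interior and exterior paraboloids at $0$ with a uniform constant $M_n$ (uniformly bounded in $n$, since $E_n\to E$ in $\Reg$), the symmetric cancellation $\phi_{\eps_n}(y)=\phi_{\eps_n}(-y)$ forces the integrand on $B_{R_0}\setminus B_r$ to be effectively supported on the thin paraboloidal strip $\{|y_d|\leq M_n|y'|^2\}\cap B_{R_0}$, where $R_0$ is the associated localization radius. Using $\phi_{\eps_n}\leq j_{\eps_n}$ together with the radial monotonicity of $j_{\eps_n}$ in the vertical direction, this contribution is majorized by a constant multiple of $M_n\int_0^{R_0}\rho^d\,j_{\eps_n}(\rho)\,\ud\rho$, plus a standard far-from-origin annular term bounded by $\int_{B_R\setminus B_{R_0}}j_{\eps_n}(y)\,\ud y$.

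The main obstacle is to transfer the tightness-at-infinity condition \eqref{cond-Lambda-one}, which is stated in terms of $\phi_{\eps_n}$, into the analogous statement for its radial majorant $j_{\eps_n}$, namely $C_{\phi_{\eps_n}}^{-1}\int_{B_R}|y|\,j_{\eps_n}(y)\,\ud y\to 0$. The hypothesis yields directly $C_{\phi_{\eps_n}}^{-1}\int_{B_R}(1\wedge|y|)\phi_{\eps_n}(y)\,\ud y\to 0$, so the real work is to trade $\phi_{\eps_n}$ for $j_{\eps_n}$ while keeping the correct normalization. The natural route is to exploit the radial monotonicity of $j_{\eps_n}$ by a layer-cake or integration-by-parts argument on $\int_0^{R}\rho^d j_{\eps_n}(\rho)\,\ud\rho$, controlling each level of $j_{\eps_n}$ by a suitable ball average of $\phi_{\eps_n}$; together with the elementary $C_{\phi_{\eps_n}}\geq \int_{B_R^c}\phi_{\eps_n}\to\infty\cdot\lambda_{\phi_{\eps_n}}(B_R^c)$ and tightness \eqref{cond-Lambda-one}, this should collapse the needed estimate to the one already known.

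Once the near-origin term vanishes in the limit, combining with $\lambda_{\phi_{\eps_n}}(B_R^c)\to 1$ gives $\kappa_{d-2}\,C_{\phi_{\eps_n}}^{-1}H_{\phi_{\eps_n}}(E_n)\to 1$, which is exactly the announced identity.
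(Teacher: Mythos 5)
Your decomposition into $B_R$ and $B_R^c$, the identity $C_{\phi_{\eps_n}}^{-1}\int_{B_R^c}\phi_{\eps_n}(y)\,\ud y=\lambda_{\phi_{\eps_n}}(B_R^c)\to1$, and the reduction of the whole theorem to showing that the renormalized near-origin term vanishes coincide exactly with the paper's argument (the paper takes $R=\eta=2\operatorname{diam}(\partial E)\vee1$ and, instead of paraboloids, uses the uniform interior and exterior tangent balls of $E_n$ at $0$ together with Lemma \ref{lem44} to convert the principal value into an absolutely convergent integral over $B_\eta\setminus(B_\delta(\delta e_d)\cup B_\delta(-\delta e_d))$).

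The gap is in how you propose to kill that near-origin term. You correctly see that once you majorize by $j_{\eps_n}$ the thin-strip estimate lands on $C_{\phi_{\eps_n}}^{-1}\int_{B_R}(1\wedge|y|)\,j_{\eps_n}(y)\,\ud y$, and you propose to deduce its vanishing from \eqref{cond-Lambda-one} by comparing levels of $j_{\eps_n}$ with ball averages of $\phi_{\eps_n}$. This transfer cannot work: the hypotheses give only the one-sided bound $\phi_\eps\le j_\eps$, so $j_\eps$ may be enormous where $\phi_\eps$ vanishes (e.g.\ $\phi_\eps\equiv0$ on $B_1$ with $j_\eps$ an arbitrarily large constant there), and then $C_{\phi_\eps}^{-1}\int_{B_1}(1\wedge|y|)j_\eps(y)\,\ud y$ blows up while \eqref{cond-Lambda-one} still holds. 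The same objection hits your annular term $\int_{B_R\setminus B_{R_0}}j_{\eps_n}$, where passing to $j_{\eps_n}$ is unnecessary anyway, since $\int_{B_R\setminus B_{R_0}}\phi_{\eps_n}\le R_0^{-1}\int_{B_R}(1\wedge|y|)\phi_{\eps_n}(y)\,\ud y=R_0^{-1}C_{\phi_{\eps_n}}\lambda_{\phi_{\eps_n}}(B_R)$ is already controlled by the hypothesis. The paper's proof is arranged precisely so that $j_{\eps_n}$ never appears in the renormalized estimate: the annular part $B_\eta\setminus B_{\delta/2}$ is bounded by $(1\vee\tfrac2\delta)\,C_{\phi_{\eps_n}}\lambda_{\phi_{\eps_n}}(B_\eta)$ and the thin lens between the two tangent balls is bounded, following the proof of Lemma \ref{lem43}, by $c\,\delta\int_{B_{\eta\wedge\delta/2}}(1\wedge|y|)\phi_{\eps_n}(y)\,\ud y=c\,\delta\,C_{\phi_{\eps_n}}\lambda_{\phi_{\eps_n}}(B_{\eta\wedge\delta/2})$, so the whole term is $O(\lambda_{\phi_{\eps_n}}(B_\eta))\to0$ because $\lambda_{\phi_{\eps_n}}$ is a probability measure whose mass escapes to infinity. (Be aware that the vertical-slicing step behind that thin-lens bound uses radial monotonicity of the integrand, which is the very subtlety you sensed; the point is that the bound must be kept in terms of $(1\wedge|y|)\phi_{\eps_n}$, not converted into one for $j_{\eps_n}$.) As written, your proof is incomplete at its decisive step, and the route you sketch for completing it is not viable.
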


\begin{proof}
Set $\eta = (2 \operatorname{diam} (\partial E) \vee 1)$. Without loss of generality, we can assume that $E$ is bounded. For $\eps_n$ small enough, $E, E_{n} \in B_{\eta}$ and whence
\begin{align}\label{eq:decomp-111}
\kappa_{d-2} H_{\phi_{\eps_n}} (E_{n}) &= \lim_{r\to 0^+} \int_{B_{\eta}\setminus B_r} \widetilde{\chi}_{E_{n}} (y) \phi_{\eps}(y) \ud y + \int_{B_{\eta}^c} \phi_{\eps}(y) \ud y. 
\end{align}
By our assumptions, $C_{\phi_{\eps_n}}^{-1} \int_{B_{\eta}^c} \phi_{\eps_n}(y) \ud y = \lambda_{\phi_{\eps_n}} (B_{\eta}^c) \to 1$, so we are left to show that the first term in \eqref{eq:decomp-111} (when multiplied by $C_{\phi_{\eps_n}}^{-1}$) tends to zero. Since $E_{n} \to E$ in $\Reg$, we deduce that $E_{n}$ and $E$ satisfy a uniform interior and exterior ball condition at zero, i.e.\  there exists $\delta>0$ such that $B_{\delta}(-\delta e_d) \subset E, E_{n}$ and $B_{\delta}(\delta e_d) \subset E^c, E_{n}^c$. By Lemma \ref{lem44},
\begin{align*}
\lim_{r\to 0^+} \int_{B_{\eta}\setminus B_{r}} \widetilde{\chi}_{E_{n}} (y) \phi_{\eps_n}(y) \ud y
=
\int_{B_{\eta}\setminus (B_{\delta}(-\delta e_d) \cup B_{\delta}(\delta e_d)} \widetilde{\chi}_{E_{n}} (y) \phi_{\eps_n}(y) \ud y.
\end{align*}
Further, proceeding similarly as in the proof of Lemma \ref{lem43} we obtain
\begin{align*}
C_{\phi_{\eps_n}}^{-1} \int_{B_{\eta}\setminus (B_{\delta}(-\delta e_d) \cup B_{\delta}(\delta e_d)} &\widetilde{\chi}_{E_{n}} (y) \phi_{\eps_n}(y) \ud y \\
&\leq C_{\phi_{\eps_n}}^{-1} 
\left(
\int_{B_{\eta}\setminus B_{\delta/2}} \phi_{\eps_n}(y) \ud y + c \delta \int_{B_{ \eta \wedge \frac{\delta}{2}}} (1\wedge |y|) \phi_{\eps_n}(y) \ud y \right) \\
&\leq 
\left(1 \vee \frac{2}{\delta}\right) 
\lambda_{\phi_{\eps_n}} (B_{\eta}\setminus B_{\delta/2})
+  c \delta \lambda_{\phi_{\eps_n}} (B_{ \eta \wedge \frac{\delta}{2}})\\
&\leq    \left(
\left(1 \vee \frac{2}{\delta}\right)
+ c\delta \right)\lambda_{\phi_{\eps_n}}(B_{\eta}) \to 0,\quad \mathrm{as}\ n\to \infty,
\end{align*}
and the proof is finished.
\end{proof}

\subsection{Examples}
In this short paragraph we illustrate our results by presenting a few important examples. 
\begin{example}[Asymptotics of $\alpha$-fractional curvatures as $\alpha \uparrow 1$]\label{ex1}
Corollary \ref{cor-j-profiles} allows us to recover the well-known convergence of $\alpha$-fractional curvature towards the classical curvature for sets with boundary of class $\calC^2$.
If we choose 
\begin{align}\label{rot_inv_stable_lev_dens}
j_\alpha (x) = \frac{1}{|x|^{d+\alpha}},\quad \alpha \in (0,1).
\end{align}
then 
$$
\Ku_{j_{\alpha},e} (E) =  K_{\alpha,e}(E),\quad \mathrm{and}\quad H_{j_{\alpha}} (E) =  H_{\alpha}(E).
$$
We first observe that for $C_\alpha = \int (1\wedge |x|)j_\alpha (x)\ud x$ it holds
\begin{align*}
\kappa_{d-1}^{-1} C_\alpha &= \int_1^\infty r^{-1-\alpha }\ud r + \int_0^1r^{-\alpha}\ud r = \frac{1}{\alpha}+\frac{1}{1-\alpha}\sim \frac{1}{1-\alpha},\quad \alpha \uparrow 1.
\end{align*}
Further, for any $R>0$,
\begin{align}\label{lambda-alpha-asymp}
\lambda_\alpha (B_R^c) = \kappa_{d-1} C_\alpha^{-1}\cdot
\begin{cases}
\int_R^\infty r^{-1-\alpha} \ud r = \frac{1}{\alpha}R^{-\alpha},\quad R>1;\\
\int_R^1 r^{-\alpha}\ud r + \int_1^\infty r^{-1-\alpha}\ud r= \frac{1}{1-\alpha}(1-R^{-\alpha +1})+ \frac{1}{\alpha},\quad R\leq 1
\end{cases}
\end{align}
and we easily infer that 
\begin{align*}
\lim_{\alpha\uparrow 1}\lambda_{\alpha}(B_R^c)=0,\quad R>0.
\end{align*}
Then, if we choose any $\alpha_n \uparrow 1$ and $E_n\to E$ in $\Reg$, 
 we conclude by Corollary \ref{cor-j-profiles} that
$$
\lim_{n\to \infty} C_{\alpha_n}^{-1} \Ku_{j_{\alpha_n},e} (E_n)= K_e (E),\quad \mathrm{and}\quad 
\lim_{n\to \infty} C_{\alpha_n}^{-1} H_{j_{\alpha_n}} (E_n)= H(E).
$$
This recovers \eqref{conv-fract-direct-to-class} and \eqref{conv-fract-mean-to-class} for $E_n \equiv E$, and \cite[Theorem 4.10]{Cesaroni-Stability} in the general case. 
\end{example}

\begin{example}[Asymptotics of $\alpha$-fractional curvatures as $\alpha \da 0$]
In the notation of Example \ref{ex1}, we observe that
\begin{align*}
\kappa_{d-1}^{-1} C_\alpha &= \frac{1}{\alpha}+\frac{1}{1-\alpha}\sim \frac{1}{\alpha},\quad \alpha \da 0.
\end{align*}
Thus, by \eqref{lambda-alpha-asymp}, for any $R>0$,
\begin{align*}
\lim_{\alpha\da 0}\lambda_{\alpha}(B_R^c)=1,\quad R>0.
\end{align*}
Hence, by Theorem \ref{thm3}, we obtain for $\alpha_n \downarrow 0$ and for $E_n\to E$ in $\Reg$ such that $0\in \partial E \cap \partial E_n$,
$$
\lim_{n\to \infty}  C_{\alpha_n}^{-1} \kappa_{d-2} H_{\alpha_n} (E_n)= \kappa_{d-1},
$$
which recovers 
\cite[Eq.\ (4.14)]{Cesaroni-Stability}
and
\eqref{conv-fract-at-0} by choosing $E_n\equiv E$.
\end{example}

\begin{example}\label{J_perim}
Let $j\colon \Rd \to [0,\infty)$ be a positive, radial and radially non-increasing function such that $\int_{\Rd} |x| j(x) \ud x<\infty$ and let 
$g$ be a non-negative, continuous function on $\mathbb{S}^{d-1}$.
Let $\phi(x) = j(x) g(x/|x|)$. 
For any $\varepsilon >0$ let $\phi_\varepsilon (x) = \varepsilon^{-d}\phi(x/\varepsilon)$. We note that in this case $C_{\phi_\eps} \sim \eps \int_{\Rd} |x|\phi(x) \ud x$.
We easily verify that the corresponding measures $\lambda_{\phi_\varepsilon}$ satisfy condition \eqref{tight_at_zero} and  we obtain that for $\eps_n \downarrow 0$ and for $E_n \to E$ in $\Reg$ such that $x\in \partial E \cap \partial E_n$,
\begin{equation}\label{Per_J_e}
\lim_{n\to \infty}  \eps_n^{-1} K_{\phi_{\eps_n},e} (x,E_n) =  g(e)K_{e}(x,E) \int_{0}^{\infty} r^d j(r)\ud r, \quad e \in \Ss^{d-1}(x) \cap \nu(x)^{\perp},
\end{equation}
\begin{equation}\label{Per_J_conv}
\lim_{n\to \infty}  \eps_n^{-1} H_{\phi_{\eps_n}}(x,E_n)=
\frac{\int_{0}^{\infty} r^d j(r) \ud r}{\kappa_{d-2}} \int_{\Ss^{d-1}(x) \cap \nu(x)^{\perp}} g(e) K_{e}(x,E)  \calH^{d-2}(\ud e).
\end{equation}
In particular, if 
$g\equiv 1$, $j$ is integrable and has compact support, 
and $E_n\equiv E$, then \eqref{Per_J_conv} allows us to recover \cite[Theorem 3.7]{Rossi_paper_1} \eqref{conv_j-curv-to-class}.
\end{example}

\begin{example}\label{ex10}
Let $g$ be a continuous function on $\Ss^{d-1}$ such that $g(\theta)=g(-\theta) \geq 0$ and $C_g=1$. 
Let  
$\nu$ be a locally bounded function which is regularly varying at infinity of index $-d-1$ (see e.g.\ \cite[Section 1.4.2]{Bingham}) 
such that $\int_{1}^{\infty} r^d \nu(r) \ud r = \infty$. 
Let $f_0$ be a given symmetric non-negative kernel such that $f_0 \in L^{\infty}(\Rd)$ and
$$
f_0(x) = \nu(|x|) g\left(\frac{x}{|x|}\right), \quad |x|\geq 1.
$$
For any $\eps \in (0,1)$ we set 
$$
f_{\eps}(x) = \frac{1}{\eps^d} f_0\left(\frac{x}{\eps}\right).
$$
At this place it is worth to observe that we cannot apply
Theorem \ref{thm2} directly to the kernels $\{f_{\eps}\}_{\eps>0}$. We therefore introduce a family of auxiliary kernels which enable us to exploit Proposition \ref{prop_nu}.
For that reason, we set $\tilde{\nu}(r) := \sup \{\nu(s): s \geq r\}$ and
$$
\phi(x) = g\left(\frac{x}{|x|}\right) j(x), \quad \textnormal{where} \quad j(x) = \tilde{\nu}(1) \wedge\tilde{\nu}(|x|).
$$
Further, for any $\eps \in (0,1)$, let
$$
\phi_{\eps}(x) = \frac{1}{\eps^d} \phi\left(\frac{x}{\eps}\right).
$$
It evidently follows that
$$
f_{\eps}(x) = \phi_{\eps}(x) + \overline{\phi}_{\eps} (x),
$$
where
$$
\overline{\phi}_{\eps} (x) = 
\frac{1}{\eps^d}
\left(
\left(f_0\left(\frac{x}{\eps}\right) -  g\left(\frac{x}{|x|}\right)\tilde{\nu}(1)\right) \mathds{1}_{|x|<\eps}
+\left(\nu\left(\frac{|x|}{\eps}\right) - \tilde{\nu}\left(\frac{|x|}{\eps}\right) \right)g\left(\frac{x}{|x|}\right) \mathds{1}_{|x|\geq\eps}\right). 
$$
By \cite[Theorem 1.5.3]{Bingham}, $\tilde{\nu}(r) \sim \nu(r)$ as $r \to\infty$. Since $\tilde{\nu}$ is regularly varying of index $-d-1$, it has the following representation $\tilde{\nu}(r) = r^{-d-1} \ell(r)$, where $\ell$ is slowly varying at infinity. 
We observe that
\begin{align*}
C_{\phi_{\eps}} &=
\eps \frac{\ell(1)}{d+1} + \eps \int_{1}^{1/\eps} \ell(r) r^{-1} \ud r
+ \int_{1/\eps}^{\infty} \ell (r) r^{-2} \ud r
\geq \eps  \int_{1}^{1/\eps} \ell(r) r^{-1} \ud r.
\end{align*}
We set
$$
\overline{L}(r) = \int_1^r \ell(s) s^{-1} \ud s, \quad r\geq 1.
$$
By \cite[Proposition 1.5.8]{Bingham}, $\ell(r)/\overline{L}(r) \to 0$ as $r\to \infty$.
By \cite[Proposition 1.5.10]{Bingham}, for any $R>0$
\begin{align*}
\lambda_{\phi_{\eps}}(B_R^c) \leq
\frac{\int_{R/\eps}^{\infty}  r^{-2} \ell(r) \ud r}{\eps  \overline{L}({1/\eps})}  
&\sim\frac{\ell(R/\eps)}{R  \overline{L}(1/\eps)} 
\sim \frac{\ell(1/\eps)}{R  \overline{L}(1/\eps)} \stackrel{\eps\da 0}{\longrightarrow} 0.
\end{align*}
We conclude that Theorem \ref{thm2} is valid for the family of kernels $\{\phi_{\eps}\}_{\eps>0}$. In the next step we aim
to verify that the family $\{\overline{\phi}_{\eps}\}_{\eps>0}$ satisfies the assumptions of Proposition \ref{prop_nu}.
For that reason, we introduce a function $F$ such that 
$$
\overline{L}(F(\eps)) =  \big(\overline{L}(1/\eps)\big)^{1/2}.
$$
We observe that $F(\eps)$ tends to infinity as $\eps\to0$.
We next set
$$
\delta(\eps) = \sup_{r\geq F(\eps)} \left|\frac{\nu(r)}{\tilde{\nu}(r)} - 1 \right|.
$$
Evidently, $\delta(\eps) \to 0$ as $\eps \to 0$.
Let
$$
\overline{j}_{\eps}(x) =\frac{1}{\eps^d} \left((c_1+c_2\tilde{\nu}(1))\mathds{1}_{|x|<\eps}
+ c_2  \tilde{\nu}\left(\frac{|x|}{\eps}\right)  \mathds{1}_{\eps\leq|x|\leq F(\eps)\eps} 
+ c_2 \delta(\eps) \tilde{\nu}\left(\frac{|x|}{\eps}\right) \mathds{1}_{|x|\geq F(\eps) \eps}\right), 
$$
where $c_1 = \mathrm{ess \, sup} f_0, c_2 = \sup g$. Observe that
$|\overline{\phi}_{\eps} (x)| \leq \overline{j}_{\eps}(x) \, \mathrm{a.e.}$ and $\int_{\Rd} (1\wedge |x|) \overline{j}_{\eps}(x) \ud x<\infty$.
We conclude that 
$$
\int_{\Rd}(1\wedge |x|) \overline{j}_{\eps} (x) \ud x \sim \eps \kappa_{d-1} \left(
\frac{c_1+c_2\tilde{\nu}(1)}{d+1} + c_2  \overline{L}(F(\eps)) + c_2\delta(\eps) \int_{F(\eps)}^{1/\eps}\ell(r) r^{-1} \ud r + c_2 \delta (\eps)\ell(1/\eps)
\right)
$$
which yields
\begin{align*}
\lim_{\eps \downarrow 0} C_{\phi_{\eps}}^{-1} \int_{\Rd} (1\wedge |x| ) \overline{j}_{\eps} (x) \ud x = 0.
\end{align*}
In the light of \cite[Proposition 1.5.8 and Proposition 1.5.9a]{JEMS}, we have 
\begin{equation}\label{eikonal}
C_{\phi_{\eps}} \sim \eps \overline{L}(1/\eps), \quad \textnormal{as} \,\, \eps \downarrow 0. 
\end{equation}
For any $\eps>0$ we set
$$
\psi_{\eps} (x) = \frac{f_{\eps}(x)}{\eps \overline{L}(1/\eps)}, \quad x\in \Rd.
$$
Finally, by Proposition \ref{prop_nu}, for any $\varepsilon_n \downarrow 0$ and $E_n\to E$ in $\Reg$,
$$
\lim_{n\to\infty} H_{\psi_{\eps_n}}(x,E_n) =
 \frac{1}{\kappa_{d-2}} \int_{\Ss^{d-2}} g(e) K_e (x,E) \calH^{d-2} (\ud e).
$$
If we choose $\nu(r) = \frac{1}{r^{d+1}}$, then equation \eqref{eikonal} becomes $C_{\phi_{\eps}} \sim \eps \log(1/\eps)$, as $\eps \downarrow 0$, and
\begin{equation}\label{psi-jems}
\psi_{\eps}(x) = \frac{1}{\eps^{d+1} \log (1/\eps)} f_0\left(\frac{x}{\eps}\right),
\end{equation}
and this choice allows us to relate the present example to the {\em dislocation model} studied in \cite{JEMS}. More precisely, \eqref{psi-jems} coincides with the scaling from \cite[Eq.\ (1.3)]{JEMS}.
\end{example}

\begin{example}
Let $g$ be a continuous function on $\Ss^{d-1}$ such that $g(\theta)=g(-\theta) \geq 0$ and $C_g=1$. 
Let $\nu$ be a locally bounded function which is regularly varying at infinity of index $-d$ 
and such that $\int_1^{\infty} r^{d-1} \nu(r) \ud r <\infty$.
Let $f_0$ be a a given symmetric non-negative kernel such that $f_0 \in L^{\infty}(\Rd)$ and
$$
f_0(x) = \nu(|x|) g\left(\frac{x}{|x|}\right), \quad |x|\geq 1.
$$
For any $\eps \in (0,1)$ we set 
$$
f_{\eps}(x) = \frac{1}{\eps^d} f_0\left(\frac{x}{\eps}\right).
$$ 
Let $\tilde{\nu}(r) := \sup \{\nu(s): s \geq r\}$. By \cite[Theorem 1.5.3]{Bingham}, $\tilde{\nu}(r) \sim \nu(r)$ as $r \to\infty$.
We further set 
$$
j_{\eps} (x) = \frac{1}{\eps^d}\left(c_1 \mathds{1}_{|x| < \eps} + c_2 \tilde{\nu}\left(\frac{|x|}{\eps}\right)\mathds{1}_{|x|>\eps}\right),
$$
where $c_1 = \mathrm{ess \, sup} f_0, c_2 = \sup g$. Observe that $f_{\eps}(x) \leq j_{\eps}(x)$ and $\int_{\Rd} (1\wedge |x|) j_{\eps}(x) \ud x <\infty$.
Since $\nu$ is regularly varying of index $-d$, 
it has the following representation
$\nu(r) = r^{-d} \ell(r)$, where $\ell$ is slowly varying at infinity. 
We set
$$
\underline{L}(r) = \int_r^{\infty} \ell(s) s^{-1} \ud s, \quad r\geq 0.
$$
By \cite[Proposition 1.5.8]{Bingham}, 
\begin{align*}
C_{f_{\eps}} &=
\eps \int_{B_1}|x|f_0(x) \ud x + \eps \int_{1}^{1/\eps} \ell(r) \ud r
+ \underline{L}(1/\eps) \\
&\sim \eps \int_{B_1}|x|f_0(x) \ud x + \ell(1/\eps) + \underline{L}(1/\eps).
\end{align*}
By \cite[Proposition 1.5.9b]{Bingham}, for any $R>0$
\begin{align*}
\lambda_{f_{\eps}}(B_R^c) \geq
\frac{1}{C_{f_{\eps}}} 
\underline{L}((1\vee R)/\eps)
&\sim \frac{\underline{L}((1\vee R)/\eps) 
}{\eps \int_{B_1}|x|f_0(x) \ud x + \ell(1/\eps) + \underline{L}(1/\eps)
}  \stackrel{\eps\da 0}{\longrightarrow} 1.
\end{align*}
Hence, by Theorem \ref{thm3}, for any $\varepsilon_n \downarrow 0$ and $E_n\to E$ in $\Reg$,
$$
\lim_{n\to\infty} C_{f_{\eps_n}}^{-1} H_{f_{\eps_n}}(x,E_n) = \kappa_{d-2}^{-1}.
$$
\end{example}

\subsection{Asymptotics of anisotropic fractional curvatures}

The notion of non-local curvature introduced in the present article can be also applied in the framework of anisotropic curvatures as we shall show that such curvatures can be seen as the first variation of the corresponding nonolocal anisotropic  perimeters. 
Our first  task is to study the limit behaviour of \eqref{anisotr-fract-curv} in the case when $\alpha \uparrow 1$. 
 We start by identifying  the limit  and next we prove that it is the first variation of the anisotropic perimeter  $\Per(E,\mathcal{ZK})$.
\begin{theorem}\label{thm:conv-anisotropic-curv}
Let $\alpha_n \uparrow 1$ and $E_{n} \in \Reg$ be such that $E_{n} \to E$ in $\Reg$ for some $E$ in $\Reg$. Then, for every $x\in\partial E \cap \partial E_{n}$,
it holds
\begin{align}\label{Anisotr-limit}
\lim_{n\to \infty}(1-\alpha_n) H_{\alpha_n}^\mathcal{K} (x,E_n) =
\frac{1}{\kappa_{d-2}} \int_{e\in \Ss^{d-1}(x)\cap \, \nu(x)^{\perp}} \frac{K_{e}(x,E)}{\|e\|_\mathcal{K}^{d+1}}\, \calH^{d-2} (\ud e).
\end{align}
\end{theorem}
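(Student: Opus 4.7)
The plan is to recognize the anisotropic kernel $\phi_\alpha(z):=\|z\|_{\mathcal{K}}^{-d-\alpha}$ as a special case of the setting of Theorem~\ref{thm2}. By one-homogeneity of $\|\cdot\|_{\mathcal{K}}$ we have the decomposition
\begin{align*}
\phi_\alpha(z) \;=\; g_\alpha\!\left(\frac{z}{|z|}\right) j_\alpha(z), \qquad j_\alpha(z) = |z|^{-d-\alpha}, \qquad g_\alpha(\theta) = \|\theta\|_{\mathcal{K}}^{-d-\alpha}.
\end{align*}
Here $j_\alpha$ is radial and radially non-increasing with $(1\wedge|x|)j_\alpha$ integrable for $\alpha\in(0,1)$, while $g_\alpha$ is continuous and, by origin-symmetry of $\mathcal{K}$, even on $\Ss^{d-1}$. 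Using joint continuity of $(\theta,\alpha)\mapsto\|\theta\|_{\mathcal{K}}^{-d-\alpha}$ and compactness of $\Ss^{d-1}$, the family $g_\alpha$ converges uniformly on $\Ss^{d-1}$ to $g:=\|\cdot\|_{\mathcal{K}}^{-d-1}$ as $\alpha\uparrow 1$. Thus the structural hypotheses of Theorem~\ref{thm2} are satisfied (with $\varepsilon:=1-\alpha$).

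Next I would verify the concentration condition \eqref{tight-zero} and compute the normalizing constant through a polar-coordinate reduction modelled on Example~\ref{ex1}. Explicitly,
\begin{align*}
C_{\phi_\alpha} \;=\; C_{g_\alpha}\int_0^\infty (1\wedge r)\,r^{-1-\alpha}\,\ud r \;=\; C_{g_\alpha}\left(\frac{1}{1-\alpha}+\frac{1}{\alpha}\right),
\end{align*}
so that $(1-\alpha)\,C_{\phi_\alpha}\to C_g$ as $\alpha\uparrow 1$. The parallel computation for $\lambda_{\phi_\alpha}(B_R^c)$ produces a denominator that grows like $(1-\alpha)^{-1}$ while the numerator remains bounded in $\alpha$, forcing $\lambda_{\phi_\alpha}(B_R^c)\to 0$ for every $R>0$.

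With all assumptions in force, applying Theorem~\ref{thm2} along the sequence $\varepsilon_n:=1-\alpha_n$ gives
\begin{align*}
\lim_{n\to\infty} C_{\phi_{\alpha_n}}^{-1}\,H_{\phi_{\alpha_n}}(x,E_n) \;=\; \frac{1}{C_g\,\kappa_{d-2}}\int_{\Ss^{d-1}(x)\cap \nu(x)^\perp} \frac{K_e(x,E)}{\|e\|_{\mathcal{K}}^{d+1}}\,\calH^{d-2}(\ud e).
\end{align*}
Multiplying both sides by $(1-\alpha_n)\,C_{\phi_{\alpha_n}}\to C_g$ and converting from $H_{\phi_{\alpha_n}}$ to $H_{\alpha_n}^{\mathcal{K}}$ via the $\kappa_{d-2}^{-1}$ normalization built into definition~\eqref{eq:mean} delivers exactly \eqref{Anisotr-limit}.

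The main obstacle, if any, is the careful bookkeeping of the constants $C_g$, $\kappa_{d-2}$ and the normalization appearing in \eqref{eq:mean}; once these are handled, the statement is an immediate corollary of Theorem~\ref{thm2}. The two substantive verifications, namely uniform convergence $g_\alpha\to g$ on $\Ss^{d-1}$ and the concentration condition \eqref{tight-zero}, both reduce to elementary one-dimensional integrals, so no genuinely new analytic difficulty arises.
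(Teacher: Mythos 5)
Your proposal is correct and follows essentially the same route as the paper: the same decomposition $\phi_\alpha = g_\alpha(x/|x|)\,j_\alpha(x)$, the same computation of $C_{\phi_\alpha}$ and of $\lambda_{\phi_\alpha}(B_R^c)$, uniform convergence of $g_\alpha$ on $\Ss^{d-1}$ (the paper cites Dini's theorem where you use joint continuity plus compactness, an equivalent argument), and then a direct application of Theorem \ref{thm2}. The only care point is the $\kappa_{d-2}^{-1}$ normalization discrepancy between \eqref{anisotr-fract-curv} and \eqref{eq:mean}, which you flag explicitly and which is present in the paper's own bookkeeping as well.
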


\begin{proof}
Without loss of generality we can again work in normal coordinates introduced in Section \ref{sec:Prem}.
We consider the following kernel
\begin{align*}
\phi_{\alpha}^\mathcal{K}(x) = g_{\alpha}^{\mathcal{K}}(x/|x|) j_{\alpha} (x),
\end{align*}
where $g_{\alpha}^\mathcal{K} (\theta) = \|\theta\|_\mathcal{K}^{-d-\alpha}$ and $j_{\alpha}(x) = |x|^{-d-\alpha}$. 
With this choice we have
\begin{align}
H_{\phi_\alpha^\mathcal{K}}(E)= H_\alpha^\mathcal{K} (E).
\end{align} 
We compute that
$$
C_{\alpha}^\mathcal{K} 
=
\int (1\wedge |x|)\phi_\alpha^\mathcal{K} (x)\ud x 
= C_{\alpha}(\mathcal{K}) \left(\frac{1}{\alpha} + \frac{1}{1-\alpha}\right),
$$
where 
$$
C_\alpha (\mathcal{K}) = \int_{\Ss^{d-1}}\frac{\calH^{d-1}(\ud \theta)}{\Vert \theta \Vert_\mathcal{K}^{d+\alpha}}\to \int_{\Ss^{d-1}}\frac{\calH^{d-1}(\ud \theta)}{\Vert \theta \Vert_\mathcal{K}^{d+1}},\quad \mathrm{as}\ \alpha \uparrow 1.
$$
We also easily find that the measures $\lambda_\alpha^\mathcal{K}$ appearing in Theorem \ref{thm2} satisfy
\begin{align*}
\lambda_\alpha^\mathcal{K}(B_R^c) = \frac{\int_R^\infty (1\wedge r)r^{-\alpha -1}\ud r}{\int_0^\infty (1\wedge r)r^{-\alpha -1}\ud r}
\end{align*}
and we show similarly as in Example \ref{ex1} that $\lambda ^\mathcal{K}_\alpha(B_R^c)$
converges to $0$ as $\alpha \uparrow 1$.
Further, Dini's theorem implies that $\lim_{\alpha \uparrow 1}g_\alpha^\mathcal{K}(\theta) = \Vert \theta \Vert_\mathcal{K}^{-d-1}$ uniformly in $\theta \in \Ss^{d-1}$.
We conclude by Theorem \ref{thm2} that if $\alpha_n \uparrow 1$ then
$$
\lim_{n\to \infty} (1-\alpha_n) K_{\phi_{\alpha_n}^K,e} (E_n)
= \frac{K_e(E)}{\|e\|_{\mathcal{K}}^{d+1} }, \quad e\in\Ss^{d-2},
$$
and
$$
\lim_{n\to \infty} (1-\alpha_n) H_{\alpha_n}^\mathcal{K} (E_n) = \frac{1}{\kappa_{d-2}} \int_{\Ss^{d-2}} \frac{K_e(E)}{\|e\|_{\mathcal{K}}^{d+1}} \, \calH^{d-2}(\ud e),
$$
which finishes the proof.
\end{proof}

We close the paper with the result concerning the first variation of the anisotropic perimeter.
\begin{theorem}\label{anisotr-thm-first-var}
The quantity appearing on the right-hand side of \eqref{Anisotr-limit} is the first variation of the anisotropic perimeter $\Per (E,\mathcal{ZK})$.
\end{theorem}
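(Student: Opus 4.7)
The plan is to follow the strategy used for the nonlocal $\phi$-perimeter in Section \ref{subsec:variation}. Namely, by \cite[Proposition 4.5]{Chambolle-Archiv}, once the candidate
\[
H_{\mathcal{ZK}}(x,E) := \frac{1}{\kappa_{d-2}} \int_{\Ss^{d-1}(x) \cap \nu(x)^\perp} \frac{K_e(x,E)}{\|e\|_{\mathcal{K}}^{d+1}}\, \calH^{d-2}(\ud e)
\]
is known to satisfy the continuity property (C) of Proposition \ref{axioms}, it suffices to establish the coarea-type identity
\[
\Per(\{\vp \geq t_1\}, \mathcal{ZK}) = \Per(\{\vp \geq t_2\}, \mathcal{ZK}) + \kappa_{d-2}\int_{\{t_1 < \vp < t_2\}}  H_{\mathcal{ZK}}(x, \{\vp \leq \vp(x)\}) \, \ud x
\]
for every $\vp \in \calC_c^2(\R^d)$ and every pair $t_1 < t_2$ with $D\vp \neq 0$ on $\{t_1 \leq \vp \leq t_2\}$. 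The required continuity of $H_{\mathcal{ZK}}$ is routine: under $\calC^2$-convergence of smooth surfaces the classical directional curvatures $K_e$ converge continuously, and since $\|\cdot\|_\mathcal{K}^{-d-1}$ is bounded on $\Ss^{d-1}$, dominated convergence yields the conclusion.

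To obtain the coarea identity I would apply the corresponding identity already established in Section \ref{subsec:variation} to the kernel $\phi_\alpha^\mathcal{K}(x) = \|x\|_\mathcal{K}^{-d-\alpha}$ (so that $\Per_{\phi_\alpha^\mathcal{K}} = \Per_\alpha(\cdot, \mathcal{K})$ and $H_{\phi_\alpha^\mathcal{K}} = H_\alpha^\mathcal{K}$), multiply both sides by $(1-\alpha)$, and let $\alpha \uparrow 1$. The perimeter terms pass to the limit by \eqref{anisotropic_alpha_per_coverg}, and for each $x \in \{t_1 < \vp < t_2\}$ the sublevel set $\{\vp \leq \vp(x)\}$ has $\calC^2$ boundary (by the implicit function theorem, as $D\vp(x) \neq 0$), so Theorem \ref{thm:conv-anisotropic-curv} applied to the constant sequence $E_n \equiv \{\vp \leq \vp(x)\}$ delivers the pointwise convergence
\[
(1-\alpha) H_\alpha^\mathcal{K}(x, \{\vp \leq \vp(x)\}) \longrightarrow H_{\mathcal{ZK}}(x, \{\vp \leq \vp(x)\}).
\]

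The main obstacle will be justifying the interchange of limit and integration on the right-hand side via dominated convergence. I expect to need a uniform bound
\[
\sup_{\alpha \textnormal{ close to } 1}\ \sup_{x \in \{t_1 < \vp < t_2\}}\ (1-\alpha)\, \bigl|H_\alpha^\mathcal{K}(x, \{\vp \leq \vp(x)\})\bigr| < \infty.
\]
The key observation is that, since $\vp \in \calC_c^2$ with $|D\vp|$ bounded away from zero on the compact set $\{t_1 \leq \vp \leq t_2\}$, a compactness argument yields constants $M,R>0$ independent of $x$ such that for every $x \in \{t_1 < \vp < t_2\}$ the shifted set $\{\vp \leq \vp(x)\}-x$, in rotated coordinates aligning $\nu(x)$ with $e_d$, lies between the paraboloids $\{y_d = \pm M|y'|^2\}$ inside $B_R$. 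Repeating the estimates from the proof of Proposition \ref{curv-well-defined} with $\phi_\alpha^\mathcal{K}$ in place of $\phi$ and exploiting the norm equivalence $c_1|y| \leq \|y\|_\mathcal{K} \leq c_2|y|$ (valid since $\mathcal{K}$ is an origin-symmetric convex body with non-empty interior), the local contribution to $H_\alpha^\mathcal{K}$ over $B_R$ is of order $(1-\alpha)^{-1}$, while the contribution of the tail over $B_R^c$ is uniformly controlled in $\alpha$; multiplying by $(1-\alpha)$ yields the desired uniform bound, so dominated convergence on the bounded set $\{t_1 < \vp < t_2\}$ completes the proof.
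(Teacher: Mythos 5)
Your strategy is sound, but it is genuinely different from the one the paper uses, so let me compare. The paper never passes to the limit in a coarea identity: it invokes \cite[Theorems 3.7 and 4.3]{Bellettini} to the effect that the first variation of $\Per(\cdot,\mathcal{ZK})$ is the anisotropic curvature $\kappa_{\fN}^{E}=\mathrm{div}\big(\nabla\fN^{0}(\nabla d_{\fN}^{E})\big)$ with $\fN^{0}=\Vert\cdot\Vert_{\mathcal{Z}^{*}\mathcal{K}}$, rewrites the right-hand side of \eqref{Anisotr-limit} as $\mathrm{tr}\big(A(\nu^{E})D^{2}d^{E}\big)=\mathrm{div}\big(\nabla G(\nabla d^{E})\big)$ via \cite[Theorem 1.7]{JEMS}, and then computes the Fourier transform of the tempered distribution attached to $g(\theta)=\Vert\theta\Vert_{\mathcal{K}}^{-d-1}$ to identify $G=\pi^{-1}\fN^{0}$ through the defining relation \eqref{moment_body} of the moment body; a homogeneity computation of $\nabla G$ then shows the two divergences coincide. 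Your route instead recycles the paper's own machinery: the level-set identity of Section \ref{subsec:variation} applied to $\phi_{\alpha}^{\mathcal{K}}$, Ludwig's convergence \eqref{anisotropic_alpha_per_coverg}, Theorem \ref{thm:conv-anisotropic-curv} with the constant sequence, and a dominated-convergence bound which you correctly identify as the crux and correctly estimate (the local contribution to $H_{\alpha}^{\mathcal{K}}$ is $O((1-\alpha)^{-1})$ uniformly in $x$ by the uniform paraboloid bounds and the equivalence of $\Vert\cdot\Vert_{\mathcal{K}}$ with the Euclidean norm, while the tail is uniformly bounded). The paper's approach buys an explicit identification of the limit with the classical Cahn--Hoffman anisotropic curvature; yours buys self-containedness and avoids the Fourier computation, at the price of having to check that \cite[Proposition 4.5]{Chambolle-Archiv} applies to the local functional $\Per(\cdot,\mathcal{ZK})$ (it does: an anisotropic perimeter with convex, origin-symmetric anisotropy is a generalized perimeter in that framework). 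Two details to tidy up: for $t_{1}<t_{2}<0$ the superlevel sets are unbounded, so \eqref{anisotropic_alpha_per_coverg} must be applied to their complements exactly as in Section \ref{subsec:variation}; and the constant should be tracked more carefully, since the limit of $(1-\alpha)H_{\alpha}^{\mathcal{K}}$ is the right-hand side of \eqref{Anisotr-limit} itself, so the limiting identity matching the normalization of \eqref{def-curv-per} carries no extra factor $\kappa_{d-2}$ in front of the integral, contrary to what you wrote.
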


\begin{proof}
By \cite[Theorem 3.7]{Bellettini} we know that $\Per (E,\mathcal{ZK})$ is the Minkowski content related to the distance $d_\mathfrak{N}(x,y)=\fN(x-y)$, where $\fN(x) = \Vert x \Vert_{\mathcal{ZK}}$ is a convex norm on $\R^d$ linked with the convex body $\mathcal{ZK}$. The dual norm $\fN^0(x) = \Vert x\Vert_{\mathcal{Z}^\ast \mathcal{K}}$ is linked with the polar body $\mathcal{Z}^\ast \mathcal{K}$.
 By \cite[Theorem 4.3]{Bellettini}, the first variation of $\Per (E,\mathcal{ZK})$ is given by the following curvature
\begin{align*}
\kappa_\fN^E (x) = \mathrm{div}(\mathsf{n}_\fN^{E}(x)),
\end{align*}
where $\mathsf{n}_\fN$ is the Cahn-Hoffman vector field defined as
\begin{align*}
\mathsf{n}_\fN^{E}(x) = \nabla \fN^0 (\nabla d_\fN^{E}(x)),\quad x\in U,
\end{align*}
where $U$ is a neighbourhood of $\partial E$ and $d_\fN^{E}$ is the oriented $\fN$-distance function given by
\begin{align*}
d_\fN^{E}(x) = \mathrm{dist}_\fN (x,E) - \mathrm{dist}_\fN(x,E^c),\quad x\in \R^d,
\end{align*}
which is smooth in $U$ and satisfies the anisotropic eikonal equation, that is
\begin{align*}
\fN^0 (\nabla d_\fN^{E}(x)) =1, \quad x\in U.
\end{align*}
We easily calculate 
that the right-hand side of \eqref{Anisotr-limit} is equal  (up to the constant $\kappa_{d-2}$) to
$\mathrm{tr}\!\left( A(\nu^{E}(x)D^2d^{E}(x)\right)$, for $x\in \partial E$, where 
\begin{align*}
A(p/|p|) = \frac{1}{2}\int_{\Ss^{d-1}\cap (p/|p|)^{\perp}}g(\theta)(\theta \otimes \theta ) \calH^{d-2}(\ud \theta),\qquad 
g(\theta) = \Vert \theta \Vert_{K}^{-d-1},
\end{align*}
and $d^{E}$ is the classical oriented distance defined as
\begin{align*}
d^{E}(x) = \mathrm{dist} (x,E) - \mathrm{dist} (x,E^c),\quad x\in \R^d,
\end{align*}
which is smooth in $U$ and satisfies the classical eikonal equation $|\nabla d^{E}(x)|=1$, for $x \in U$. Here, $v^{E}(x)$ is the outer unit normal vector at  $x \in \partial E$ and it is known that $v^{E}(x) = \nabla d^{E}(x)$. Moreover, the Hessian matrix of $d^{E}$ is equal to the Weingarten map for $\partial E$, see \cite{Giusti}.
We therefore conclude that the result will follow if we show that 
\begin{align}\label{to-show-1}
\kappa_\fN^E(x)
= c_d
\mathrm{tr}\!\left( A(\nu^{E}(x)D^2d^{E}(x)\right),
\quad x\in \partial E,
\end{align}
for a constant $c_d>0$ depending only on dimension $d$. 
By \cite[Theorem 1.7]{JEMS}, the right hand-side of \eqref{to-show-1} is equal to 
$\mathrm{div} \! \left( \nabla G \big(\nabla d^{E}(x)\big) \right)$, where $G=-(2\pi)^{-1}\mathcal{F}(L_g)$ and $L_g$ is the following tempered distribution related to the function $g$, 
\begin{align*}
\langle L_g, \vp \rangle = \int g(x) \left( \vp (x) -\vp(0) -x\cdot \nabla \vp (0) \mathbf{1}_{B_1}(x) \right)\ud x
\end{align*}
and $\mathcal{F}(\vp) (\xi) = \int \vp(x) e^{-i\xi\cdot x}\ud x$ is the Fourier transform of a test function $\vp$ coming from the Schwartz space $ \mathcal{S}(\R^d)$. We first find the function $G$. We have
\begin{align*}
\langle \mathcal{F}(L_g),\vp \rangle 
&=
\langle L_g, \mathcal{F}(\vp) \rangle
=
 \int g(\xi) \left( \mathcal{F}\vp (\xi) -\mathcal{F}(\vp)(0) -\xi\cdot \nabla \mathcal{F}(\vp) (0) \mathbf{1}_{B_1}(\xi) \right)\ud \xi\\
&=
 \int g(\xi) \int \vp (x) \left( e^{-i-\xi\cdot x}-1+i\xi \cdot x \mathbf{1}_{B_1}(\xi)\right) \ud x\, \ud \xi \\
 &=
 \int \vp (x)  \int_{\Ss^{d-1}}\int_0^\infty \Vert \theta \Vert_\mathcal{K}^{-d-1}\left( e^{-ir\theta \cdot x}-1+ir\theta \cdot x \mathbf{1}_{(0,1)}(r)\right)r^{-2} \ud r \calH^{d-1}(\ud \theta) \ud x\\
 &=
 -\int \vp(x) \int_{\Ss^{d-1}}|\theta \cdot x|\frac{\calH^{d-1}(\ud \theta)}{\Vert \theta \Vert_\mathcal{K}^{d+1}}\, \ud x 
 =
 -(d+1)
 \int \vp(x) \int_K |y\cdot x|\ud y\, \ud x\\
 &=
 -\langle 2\fN^{0}, \vp \rangle, 
\end{align*}
where the penultimate equality follows by \cite[Eq.\ (3.14)]{Cygan-Grzywny-Per} and  the last line is just \eqref{moment_body}. We thus obtain that
\begin{align*}
G(x) = \frac{1}{\pi} \fN^0(x).
\end{align*}
We claim that  the gradient of $G$ is given by
\begin{align}\label{grad-G}
\nabla G(x) = \frac{1}{\pi} \frac{x}{|x|^2}\fN^0(x).
\end{align}
To show the claim it suffices to prove that for $F(x) = \int_\mathcal{K} |x\cdot y|\ud y$ it holds $\nabla F(x) = \frac{x}{|x|^2}F(x)$. We clearly have
\begin{align*}
\lim_{t\to 0}\frac{\left\vert \left( x+t\frac{x}{|x|}\right)\cdot y\right\vert - |x\cdot y|}{t} = \frac{x}{|x|}\cdot y
\end{align*}
and this yields 
\begin{align}\label{grad-F-1}
\nabla F(x) \cdot \frac{x}{|x|} = \frac{F(x)}{|x|}.
\end{align}
We next show that 
\begin{align}\label{grad-F-2}
\nabla F(x) \cdot \frac{x^\prime}{|x^\prime|}=0,
\end{align}
for any $x^\prime\in x^\perp=\{ z\in \R^d: x\cdot z=0\}$. Indeed, we decompose any $y\in \R^d$ as follows $y=sx+y^\prime$, where $y^\prime \in x^\perp$ and we observe that for $t$ small enough we have
\begin{align*}
\left\vert \left( x+t\frac{x^\prime}{|x^\prime|}\right)\cdot y\right\vert - |x\cdot y|
&=
\left\vert s|x|^2\ + \frac{t}{|x^\prime|}(x^\prime \cdot y^\prime)  \right\vert - |s|\!\cdot \!|x|^2\\
&=
\begin{cases}
\frac{t}{|x^\prime|}(x^\prime \cdot y^\prime),& s>0\\
-\frac{t}{|x	^\prime|}(x^\prime \cdot y^\prime),& s<0\\
\frac{t}{|x^\prime|}|(x^\prime \cdot y^\prime)|,& s=0.
\end{cases}
\end{align*}
Since the set $\mathcal{K}$ is origin symmetric, the two integrals (for $s>0$ and $s<0$) appearing in the left hand-side of \eqref{grad-F-2}  will cancel out. The last case ($s=0$) is irrelevant as then $y\in x^\perp$, but at the same time  $F(x)=\int_{\mathcal{K}\, \cap \,(x^\perp )^c}|x\cdot y|\ud y$. Finally, \eqref{grad-F-1} and \eqref{grad-F-2} evidently imply \eqref{grad-G}. 
This yields
\begin{align*}
\mathrm{tr}\!\left( A(\nu^{E}(x)D^2d^{E}(x)\right)
&=
\mathrm{div} \! \left( \nabla G \big(\nabla d^{E}(x)\big) \right)\\
&=
\mathrm{div} \! \left( v^{E}(x) \fN^0 (v^{E}(x)) \right)
=
\mathrm{div} \! \left( v_\fN^{E}(x) (\fN^0 (v^{E}(x)))^2  \right),
\end{align*}
where the last equality follows by \cite[Eq.\ (4-5)]{Bellettini}
and $v_\fN^{E}(x) = \frac{v^{E}(x)}{\fN^0(v^{E}(x))}$.
At last, we deal with the left hand-side of \eqref{to-show-1}.  By \cite[Eq.\ (4-4) and (4-7)]{Bellettini},
\begin{align*}
\kappa_\fN^E (x)
&=
\mathrm{div} (\nabla \fN^0 (\nabla d_\fN^{E}(x))) =\mathrm{div} (\nabla \fN^0 (v_\fN^{E}(x)))\\
&=
\mathrm{div} \left( \frac{v_\fN^{E}(x)}{|v_\fN^{E}(x)|^2} \fN^0 (v_\fN^{E}(x)) \right)
=
\mathrm{div} \left( v_\fN^{E}(x) \big( \fN^0 (v_\fN^{E}(x)) \big)^2  \right)
\end{align*}
and the proof is complete.
\end{proof}

\begin{remark}
Our stability Theorem \ref{thm:stability} can be also applied in the present framework of anisotropic curvatures. The limit curvature appearing on the right-hand side of \eqref{Anisotr-limit} enjoys property (C') and thus locally unifrom convergence of viscosity solutions holds. 
\end{remark}

\subsection*{Acknowledgement.}
We would like to thank Micha{\l} Kijaczko (Wroc{\l}aw University of Science and Technology) for helpful discussions.

\bibliographystyle{abbrv}

\end{document}